  \theoremstyle{definition}
    \newtheorem{definition}{Definition}[section]
    \crefname{definition}{Definition}{Definitions}
    \newtheorem{example}[definition]{Example}
    \newtheorem{notation}[definition]{Notation}
    \crefname{notation}{Notation}{Notations}
    \newtheorem{assumption}{Assumption}
    \crefname{assumption}{Assumption}{Assumptions}
  \theoremstyle{plain}
    \newtheorem{theorem}[definition]{Theorem}
    \newtheorem*{theorem*}{Theorem}
    \Crefname{theorem}{Theorem}{Theorems}
    \crefname{theorem}{Theorem}{Theorems}
    \newtheorem{proposition}[definition]{Proposition}
    \Crefname{proposition}{Proposition}{Propositions}
    \crefname{proposition}{Proposition}{Propositions}
    \newtheorem{lemma}[definition]{Lemma}
    \Crefname{lemma}{Lemma}{Lemmas}
    \crefname{lemma}{Lemma}{Lemmas}
    \newtheorem{corollary}[definition]{Corollary}
    \crefname{corollary}{Corollary}{Corollaries}
  \theoremstyle{remark}
    \newtheorem{remark}[definition]{Remark}
    \crefname{remark}{Remark}{Remarks}
  \newcommand{\N}{\mathbb{N}}
  \newcommand{\R}{\mathbb{R}}
  \newcommand{\M}{\MM}
  \newcommand{\X}{\mathscr{X}}
  \newcommand{\XP}{\mathscr{X}_{P_0}^*}
  \newcommand{\B}{\mathscr{B}}
  \newcommand{\CC}{C(T)}
  \newcommand{\MM}{M(T)}
  \newcommand{\HP}{H_{P_0}}
  \newcommand{\MMM}{\widetilde{M}}
  \newcommand{\Matp}{\mathrm{Mat}(p,\mathbb{R})}
  \newcommand{\Ck}[2][]{C^{#1}(#2)}
  \newcommand{\CkThM}[1][]{\Ck[#1]{\Theta;\MM}}
  \newcommand{\SN}[1]{\norm{#1}_{\CC}}
  \newcommand{\CkN}[2][]{\norm{#2}_{C^{#1}(\Theta)}}
  \newcommand{\MN}[1]{\norm{#1}_{\text{\rm TV}}}
  \newcommand{\MTN}[1]{\norm{#1}_{\MMM}}
  \newcommand{\OPN}[1]{\norm{#1}_{\mathrm{op}}}
  \newcommand{\WkqN}[2][]{\norm{#2}_{W^{#1}(\Theta)}}
  \newcommand{\CkThMN}[2][]{\norm{#2}_{\CkThM[#1]}}
  \DeclarePairedDelimiterX{\innerp}[1]{\langle}{\rangle}{\innpargs{#1}}
  \NewDocumentCommand{\innpargs}{>{\SplitArgument{1}{,}}m}{\innpargsaux#1}
  \NewDocumentCommand{\innpargsaux}{mm}{\ifblank{#1}{
    \ifblank{#2}{{\,\cdot\,}{,}{\,\cdot\,}}
    {{\,\cdot\,}{,}{\mkern2mu#2}}}
    {{#1\mkern2mu}{,}\ifblank{#2}{\,\cdot\,}{\mkern2mu#2}}
  }
  \DeclarePairedDelimiter{\ceil}{\lceil}{\rceil}
  \newcommand{\asconv}{\xrightarrow{\mathrm{a.s.}}}
  \newcommand{\pconv}{\xrightarrow{\mathrm{p}}}
  \newcommand{\dconv}{\xrightarrow{\mathrm{d}}}
  \newcommand{\E}{\mathbb{E}}
  \newcommand{\K}{\mathcal{K}}
  \newcommand{\idv}{\mathbf{1}}
  \renewcommand{\complement}{{\mathsf{c}}}
  \newcommand{\transp}{\mathsf{T}}
  \newcommand{\Prob}{\mathbb{P}}
  \DeclareMathOperator*{\argmax}{arg\,max}
  \DeclareMathOperator*{\argmin}{arg\,min}
  \DeclareMathOperator{\supp}{supp}
  \title{ Maximum likelihood estimation of mean functions for Gaussian processes under small noise asymptotics}
  \author{
    Mitsuki Kobayashi\footnote{Research Institute for Science and Engineering, Waseda University} \and 
    Yuto Nishiwaki\footnote{Graduate School of Engineering and Science, Waseda University} \and
    Yasutaka Shimizu\footnote{Department of Applied Mathematics, Waseda University} \and 
    Nobutoki Takaoka\footnotemark[2]
  }
  \date{}
\begin{document}

\maketitle

\begin{abstract}
  Maximum likelihood estimators for time-dependent mean functions within Gaussian processes are provided in the context of continuous observations. 
  We find the widest possible class of mean functions for which the likelihood function can be written explicitly. 
  When it is subjected to a small noise asymptotic condition leading to the vanishing of the primary Gaussian noise, we attain local asymptotic normality results, accompanied by insights into the asymptotic efficiency of these estimators. 
  In addition, we introduce M-estimators based on discrete samples, which also leads us to the asymptotic efficiency. 
  Furthermore, we provide quasi-information criteria for model selection analogous to Akaike Information Criteria in discretely observed cases. 

  \begin{description}
    \item[Kerwords:] 
      Gaussian process; mean function; small noise; maximum likelihood estimator; Local asymptotic normality.
    \item[MSC2020:] 60G15; 62F12.
  \end{description}
\end{abstract}

\section{Introduction}

  In recent years, Gaussian Processes (GPs) have emerged as a valuable predictive tool for time series models across various fields, with statisticians primarily focusing on their practical applications. 
  The key advantage of GPs lies in their ability to yield a multidimensional normal distribution as the marginal distribution of sample data. 
  This character simplifies the description of their marginal likelihoods, making many statistical methods based on normality formally applicable. Additionally, GPs align well with machine learning theory, with Bayesian statistical methods receiving particular attention.

  However, it would be important not to overlook a sometimes neglected theoretical aspect: the marginal distribution of GPs depends on both their mean and variance. While considerable attention is paid to their variance in practical applications, less emphasis is placed on estimating their mean function. 
  For instance, a common method in applications involves substituting the mean function with the arithmetic mean of all the data and assuming it to be zero. Although this method may seem practical initially, it lacks theoretical justification. While the law of large numbers might hold if the mean were constant, there's no such guarantee for general GPs, especially if the mean function varies in time. Moreover, \textcite{karvonen2023maximum} argue that maximum likelihood estimators based on marginal likelihoods are ill-posed, particularly in the context of estimating the mean function. Hence, a thorough review of maximum likelihood methods, especially regarding the theory of mean function estimation, is warranted.

  An example of a formulation for estimating the mean function in a continuous-time stochastic process is the drift estimation in stochastic differential equations. 
  The most representative estimation method is maximum likelihood estimation. Under continuous observation, the likelihood function is given parametrically and explicitly in the form of a stochastic integral, which is the Radon-Nikodym derivative of the distribution over the path space of the stochastic process obtained by the Girsanov theorem (see, e.g., \textcite{dermone1995expansion,kutoyants2013statistical}, and references therein). The equivalent likelihood for the GPs case is given by the {\it Cameron-Martin Theorem} in the following form: 
  \begin{equation*}
    \log \dv{P_h}{P_0}\qty(x) = (I^{-1}h)(x) - \frac{1}{2} \norm{ h }_{\HP}^2,
  \end{equation*}
  where $P_h$ is the distribution of GPs with mean function $h$, and $P_0$ is the one with zero mean function, and other detailed notations will be defined in \Cref{subsec:GaussianVectorSetting}, 
  or see, e.g., \textcite[Theorem 5.1]{lifshits2012lectures}. 
  Thus, while the likelihoods are theoretically given in abstract form, it is impossible to write them down as functions of the data in general GPs, as is the case for SDEs, and MLEs cannot be obtained in general.
  Therefore, some authors have been trying to find, through the Malliavin calculus, a class of GPs that yields an explicit representation of the likelihood; see, e.g., \textcite{erraoui2009canonical,nualart1995malliavin,alos2001stochastic}, and \textcite{hult2003approximating} studies a parametric inference for Volterra-type GPs via the Karhunen-Loeve expansion. 
  The first goal of this paper is to obtain an explicit representation of the likelihood in terms of the data by restricting the form of the mean function to a certain class. 
  This results in a broader class of mean functions than the ones obtained above.

  Once an explicit likelihood is obtained, maximum likelihood estimators (MLEs) are easily computed, and the next goal is to find asymptotic properties of MLEs.
  As a matter of fact, to the best of our knowledge, \textcite{tsirelson1983geometrical, tsirelson1986geometrical, tsirelson1987geometrical} are probably the first studies on the asymptotic justification of the maximum likelihood method for GPs. 
  The author adopted the concept of ``small noise asymptotics" which is common in the context of statistics of stochastic differential equations, where the randomness asymptotically vanishes. 
  However, Tsirel'son's estimator is defined in the \emph{space of centered measurable linear functionals}; see  \Cref{subsec:GaussianVectorSetting}, this level of abstraction makes it impractical to discuss the specific form of the likelihood and estimators. 
  We continue in his spirit and realize the construction of estimators more concretely. 

  Now, let us describe briefly the models and the main idea of this paper. The details of the notations are given in the subsequent sections. 

  For a given closed bounded interval $T$, let $X^\varepsilon=(X^\varepsilon_t)_{t \in T}$ be a Gaussian process with the (continuous) mean function $h\in\CC$, 
  and the covariance operator is given by $\varepsilon^2 \K$ with a constant $\varepsilon^2>0$. 
  If the mean function has the form $h=\K\mu_0$, where $\mu_0\in\MM$ that is a finite signed measure on $T$, 
  then the maximum likelihood estimators can be constructed through a measure $\mu_0$ because the likelihood function based on continuous observations is obtained in the form 
  \begin{equation*}
    \log \dv{P_h}{P_0}\qty(x) = \innerp{\mu , X^\varepsilon} -\frac{1}{2} \innerp{ \mu , \K \mu }, 
  \end{equation*}
  where $\innerp{\mu,x} = \int_T x(t)\mu(\dd{t})$. That is, we restrict the class of mean functions to the space $\K(\MM)$ which enables us to obtain the explicit form of the log-likelihood. 
  Thus, since we are assuming that $\K$ is known, our goal is reduced to find an unknown measure $\mu_0$, which gives the unknown mean function $h_0=\K\mu_0$. It would be easy when we have continuous observations denoted by $X^\varepsilon$, and we have 
  \begin{equation*}
    \hat{\mu}_\varepsilon = \argmax_{\mu } \qty{ \innerp{\mu , X^\varepsilon} -\frac{1}{2} \innerp{ \mu , \K \mu } }, \quad
    \hat{h}_\varepsilon \coloneqq \K \mu,   
  \end{equation*}
  where $\argmax$ is taken over in a suitable measure space. However, this ``$\argmax$" requires more careful discussion. This is because the mean function is determined through operators such as $h=\K\mu$, and thus identifiability with respect to $\mu$ becomes an issue, and $\mu$ cannot be uniquely determined in general; see \Cref{thm:functional_convergence} for details. Such a functional maximum likelihood estimator has strong consistency. 

  This methodology is naturally carried over to the parameter estimation as well. That is, by parametrizing $\mu = \mu_\theta$, the MLE for $\theta$ is obtained by 
  \begin{equation*} 
    \hat{\theta}_\varepsilon = \argmax_{\theta\in \Theta} \qty{ \innerp{\mu_\theta,X^\varepsilon}-\frac{1}{2}\innerp{\mu_\theta,\K\mu_\theta}},\quad
    \hat{h}_\varepsilon  \coloneqq \K \mu_{\hat{\theta}_\varepsilon}, 
  \end{equation*} 
  which is shown to be asymptotically efficient as $\varepsilon\to 0$ because the log-likelihood satisfies the LAN conditions under some mild regularities (see \Cref{thm:parametric consistency,thm:parametric asymptotic normality}).
  In addition, since ${\overline{\K(\M)}}^{\SN\cdot}=\supp P_0\subset\CC$, we can say that $\HP$ is bigger enough for statistical inference (see \Cref{prop:InclusionsKXHpSuppP}).

  Also, we can construct a M-estimator of $h_0$ for discrete observation, and the estimator has the asymptotic efficiency (see \Cref{sec:discrete}). By virtue of this asymptotic theory of a GP, we can construct Information Criteria for  mean functions of GPs (see \Cref{subsec:ModelSelection}), which is not formally introduced but has a mathematically rigorous justification in the sense of Theorem \ref{thm:QGAIC}.

  The paper is organized as follows: Section \ref{sec:prelimnaries} covers preliminaries, including notations and definitions of basic mathematical concepts for Gaussian processes; it also contains the definitions necessary to describe the Cameron-Martin theorem and to define the likelihood function as an abstract argument. Section \ref{sec:mainresult} is the main part. Section \ref{sec:functional} gives the MLEs in their functional form in a restricted class of mean functions. The consistency is shown by examining the asymptotic properties under small noise asymptotics. Using this form as a basis, the specific MLE of the parameter is determined when the mean function is parametrized; see Section \ref{sec:parametric}. Under this specific setting, the local asymptotic normality (LAN) of the likelihood function is presented, leading to the asymptotically efficient MLEs. The standard technique of discretizing the continuous-time likelihood leads to the asymptotic normality of the M-estimator, which is again shown to be asymptotically efficient in the sense of minimum asymptotic variance, as well as moment convergence of the estimator.
  In Section \ref{subsec:ModelSelection}, an information criterion based on discrete observations is proposed. A new proof is presented in addition to the standard route for its statistical validity.
  Finally, we discuss some examples that satisfy our regularity conditions and give simulation results that illustrate the asymptotic properties of the estimator.

\section{ Preliminaries}\label{sec:prelimnaries}

  \subsection{Notations and Definitions}

    In this section, we make a list of notations and definitions. 
    \begin{notation}
        Let $T$ be a closed bounded interval, 
        let $C(T)$ be the Banach space of continuous functions on $T$, 
        and let $M(T)$ be its dual space, 
        i.e., the space of finite signed measures on $T$ with the total variation norm $\MN{\cdot}$.
        In addition, we write $\innerp{ \mu, x } \coloneqq\int_T x_t \, \mu(\dd{t})$ for $\mu\in \MM$, $x\in \CC$ or $x\in M(T)^*$.
    \end{notation}

    \begin{notation}[Indicator function]
      For a set $A$, the \emph{indicator function} $\idv_A$ of $A$ is defined as
      \begin{equation*}
        \idv_{A}(x) \coloneqq 
        \begin{cases}
          1 & \text{if $x \in A$}, \\
          0 & \text{if $x \notin A$}.
        \end{cases}
      \end{equation*}
    \end{notation}

    \begin{notation}[Disceretization operator]
        The \emph{discretized measure} $\mu^n$ of $\mu\in\MM$ with respect to a partition $\mathcal{Q}^n$ is defined as 
        \begin{equation*}
            \mu^n \coloneqq \sum_{i=1}^n \mu (T_i) \, \delta_{t_i},
        \end{equation*}
        and the \emph{discretized function} $h^n$ (resp. $K^n$) of $h\in \CC$ (resp. $K \in C(T^2)$) with respect to $\mathcal{Q}^n$ is defined as
        \begin{equation*}
            h^n \coloneqq \sum_{i=1}^n h (t_i) \, \idv_{T_i}, \quad
            K^n \coloneqq \sum_{i,j=1}^n K (t_i,t_j) \, \idv_{T_i\times T_j},
        \end{equation*}
        where $\delta_{t}$ is the Dirac measure centered at $t$, 
        and the partition $\mathcal{Q}^n$  of $T$ is $t_0 < t_1<\dots<t_n$ so that
        \begin{equation*}
          T=\bigcup_{i=1}^n T_i, \quad
          \lim_{n\to\infty} \sup_{i=1,\dots,n} |t_i-t_{i-1}|=0,
          \quad
          T_1\coloneqq[t_0,t_1], \ \  
          T_j\coloneqq(t_{j-1},t_j] \ \  
          (2\le j \le n).
        \end{equation*}
        Note that we regard $(\cdot)^n$ as linear operators from $M(T)$ to itself, $C(T)\to L^\infty(T)$, $C(T^2)\to L^\infty(T^2)$), respectively.
        Similarly, 
        for any integral operator $\mathcal{A}:\MM\to \CC$ of the form
        \begin{equation*}
          \mathcal{A}\mu = \int_T A(s,\cdot) \, \mu(\dd{s})
        \end{equation*}
        with some $A\in C(T^2)$, the \emph{discretized operator}
        $\mathcal{A}^n$ of $\mathcal{A}$ with respect to $\mathcal{Q}^n$ is defined as
        \begin{equation*}
            \mathcal{A}^{n}(\mu) \coloneqq
            \sum_{i,j=1}^n A(t_i, t_j) \, \mu(T_i) \, \idv_{T_j}.
        \end{equation*}
        Note that we regard $(\cdot)^n$ as the linear operator from $B(\MM,\CC)$ to $B(\MM,\MM^*)$ defined by 
        \begin{equation*}
            \innerp{\mu, \mathcal{A}^{n}\nu} = \innerp{\mu^n, \mathcal{A}\nu^n}, \quad 
            \mu,\nu \in \MM,
        \end{equation*}
        where $B(E,F)$ denotes the the space of all bounded linear operators between Banach spaces $E$ and $F$,
        and several properties for the above four discretization operators are summarized in \Cref{apdx:operators}.
    \end{notation}

    \begin{definition}\label{def:DifferentiabilityOfMuTheta} 
      Let $\mathring{\Theta}$ be an open subset of $\R^p$. 
      For a Fr\'echet differentiable function $\mathring{\Theta} \to \MM$, $\theta \mapsto \mu_\theta$,
      we denote the standard Gateaux derivatives, i.e., the directional derivatives along a standard basis vector, by
      \begin{equation*}
        \partial_i\mu_\theta \coloneqq \pdv{\theta_i} \mu_\theta.
      \end{equation*}
      Similarly, denote the second and the third Gateaux derivatives for a three times Fr\'echet differentiable function $\theta\mapsto\mu_\theta$ by
      \begin{equation*}
        \partial_{ij}\mu_\theta \coloneqq \pdv{\theta_i}\pdv{\theta_j} \mu_\theta
        \quad \text{and} \quad 
        \partial_{ijk}\mu_\theta \coloneqq \pdv{\theta_i}\pdv{\theta_j}\pdv{e_k} \mu_\theta.
      \end{equation*}
      In addition, we denote the Fr\'echet derivative of $\mu_\theta$ by $\grad_\theta \mu_{\theta}$ with $\grad_\theta \coloneqq (\partial_1,\dots,\partial_p)^\transp$.
    \end{definition}

    \begin{notation}
      Let $\Theta$ be a closed bounded convex subset of $\mathbb{R}^p$ with smooth boundary, admitting an interior point.
      Then, the Banach space $\CkThM[k]$ is defined by
      \begin{equation*}
        \CkThM[k]
        \coloneqq \Set{ \mu_\bullet:\Theta\to\MM | 
        \parbox{20em}{$\mu_\theta$ is $k$ times Fr\'echet differentiable, and all the $k$-th Gateaux derivatives are uniformly continuous.}
        }
      \end{equation*}
      with norm
      \begin{equation*}
        \CkThMN[k]{ \mu_\bullet }
        \coloneqq \sum_{\abs{\alpha}\leq k} \sup_{\theta\in\Theta} \MN{ \partial_\theta^\alpha \mu_\theta},
      \end{equation*}
      where $\alpha=(\alpha_1,\dots,\alpha_p)\in\mathbb{N}^p$ denotes a multi-index of order $|\alpha|=\alpha_1+\dots+\alpha_p$, and $\partial_\theta^\alpha=\partial_1^{\alpha_1}\cdots\partial_p^{\alpha_p}$ with $\partial_j^{\alpha_j}=\pdv[\alpha_j]{\theta_j}$. 
    \end{notation}
  
  \subsection{Notations for Gaussian Vector}\label{subsec:GaussianVectorSetting}

    In this section, we set up some basic vocabularies and notations with respect to Gaussian vectors. 
    Let $(\Omega,\mathbb{P})$ be a probability space which is enough rich.

    In this section, 
    let $\X$ be a Hausdorff locally convex space, and denote its dual space $\X^*$ with the duality $\innerp{f,X}$ for $f\in \X^*$, $X\in \X$.

    \begin{definition}
      A random vector $X$ taking values in $\X$ is called a \emph{Gaussian vector} on $\X$, 
      if $\langle f,X \rangle$ is a normal random variable for all $f \in \X^*$.
      A vector $h \in \X$ is called the \emph{barycenter} of $X$
      if  $\E[\langle f,X \rangle]=\langle f,h \rangle$ for all $f \in \X^*$,
      and a linear operator $\K:\X^* \to \X$ is called the \emph{covariance operator} of $X$ 
      if $ \mathrm{Cov}\left[\innerp{f,X} , \innerp{g,X}\rangle\right] = \innerp{g, \K f}$ for all $f,g \in \X^*$.
      Particularly, if $h=0$, $X$ is said to be \emph{centered}.
      When $\X=\CC$, we call $h$ the \emph{mean} function of $X$ instead of the barycenter.
    \end{definition}

    \begin{notation}\label{note:NotationOfGaussianVector}
      If a Gaussian vector $X$ has a barycenter $h$ and a covariance operator $\K$, 
      then we write $X \sim N(h,\K)$.
    \end{notation}
    
    \begin{notation}
      $P_0$ denotes the Gaussian measure of $Z$, i.e., $\Prob \circ Z^{-1}$, where $Z \sim N(0,\K)$, and its topological support of $P$ is defined as
      \begin{equation*}
        \supp P\coloneqq \left\{ x \in \X \;\middle|\; P(N_x)>0 \  \text{ for all open neighborhood } N_x \text{ of } x \right\}.
      \end{equation*}
    \end{notation}

    \begin{notation}
        Let $I^*:\X^* \to L^2(\X, P_0)$ be the map $f\mapsto f$,
        and denote by $\XP$ the closure of $I^*( \X^* )$ in $L^2(\X, P_0)$.
        Then, let $I : \XP \to \X $ be the adjoint operator of $I^*$ with
      \begin{equation*}
        \innerp{ g, Iz } = (I^*g, z)_{ \XP } \quad 
        \text{for all} \  g \in \X^*, \  z \in \XP.      
      \end{equation*}
    \end{notation}

    \begin{remark}
        Note that the output $f\in L^2(\X,P_0)$ is defined at $P_0$-a.s. $x\in\X$ and is different from the original $f\in\X^*$,
        and that we regard $\XP$ as the Hilbert space induced by the subspace topology.
        We also note that the well-definedness of $I$ is guaranteed, $\K _0=II^*$, and $\innerp{f,\K g}=(I^*f,I^*g)_{\XP}$ for $f,g\in\X^*$ (see, e.g., Theorem 9.1 in \textcite{lifshits1995gasussian}).
        Some other properties are summarized in \Cref{apdx:operators}.
    \end{remark}

    \begin{definition}
      $h \in \X$ is called \emph{admissible shift} for $P_0$, if the map
      \begin{equation*}
        P_h: \B \to [0,1], \quad A \mapsto P_0(A-h)
      \end{equation*}
      is absolutely continuous with respect to $P_0$.
      \begin{equation*}
        \HP \coloneqq \qty{ h\in\X \;\middle|\; ch\text{ is admissible shift for } P_0, \  \text{for all} \  c\in\R } 
      \end{equation*}
      is called the \emph{kernel} of the Gaussian measure $P_0$ of $X$.
      Then, $\HP$ is regarded as the Hilbert space induced by the linear isomorphism $I$:
      \begin{equation*}
        (h_1, h_2)_{\HP} \coloneqq \qty(I^{-1}h_1, \, I^{-1}h_2)_{\XP}  \quad
        \text{for} \  h_1, h_2\in \HP.
      \end{equation*} 
      (See, e.g., Theorem 3 in \textcite{lifshits1995gasussian}.)
    \end{definition}

    \begin{definition}
        Let $\X=\CC$. 
      The \emph{Covariance function} of $X\in N(0,\K)$ is defined as 
      \begin{equation*}
        K(s,t) \coloneqq \mathrm{Cov}[X_s,X_t], \quad s,t\in T.  
      \end{equation*}
      In this case, $K$ is continuous on $T^2$, and
      \begin{equation*}
          (\K \mu)(s)= \int_T K(s,t) \, \mu(\dd{t}),
          \quad 
          \mu \in \MM.
      \end{equation*}
    \end{definition} 

  \section{M-estimators for mean function of Gaussian process}\label{sec:mainresult}

    This section is devoted to constructing M-estimators and discusses their asymptotic properties. 

    From the Cameron--Martin's theorem (see, e.g., \textcite[Theorem 9.3]{lifshits1995gasussian}) or analogous to \textcite{tsirelson1983geometrical},
    we obtain the exact likelihood form for $N(h,\varepsilon^2 \K)$ with small $\varepsilon>0$:
    \begin{equation}\label{eq:likelihood}
      \log\dv{P^\varepsilon_h}{P^\varepsilon_0}\qty(x)
      =\frac1{\varepsilon^2} \qty{ \innerp{\mu,x} - \frac12 \innerp{\mu,\K\mu} },
    \end{equation}
    where $P^\varepsilon_h$ is probability measure of $N(h,\varepsilon^2 \K)$,  
    $P^\varepsilon_0$ is probability measure of $N(0,\varepsilon^2 \K)$, and $h \in \K(\M)$.
    \Cref{sec:functional} and \Cref{sec:parametric} consider the maximum likelihood estimator using \Cref{eq:likelihood}, while \Cref{sec:discrete} provide the M-estimator for discrete observation using approximation of \Cref{eq:likelihood}.

    All assumptions used in this paper are listed below. Here we use some notations and definitions which have not been presented yet. These notations and definitions are clearly defined later before they are used.

    \begin{assumption}\label{assump:cont}
      The map $\theta\in \Theta \mapsto \mu_\theta \in \MM$ is continuous.
    \end{assumption}
 
    \begin{assumption}\label{assump:isolated}
      $\K \mu_\theta (t) = \K \mu_{\theta_0}(t)$ for all $t\in T$ implies that $\theta \neq \theta_0$.
    \end{assumption}

    \begin{assumption}\label{assump:Theta}
      \(\Theta\subset\R^p\) is compact, and $\theta_0\in \Theta$.
    \end{assumption}

    \begin{assumption}\label{assump:convex_Theta}
      \(\Theta\) is convex and $\theta_0\in \mathring{\Theta}$.
    \end{assumption}

    \begin{assumption}\label{assump:MuIsC^2}
      The map $\mu_\bullet : \Theta \to \MM$ is in $\CkThM[2]$.
    \end{assumption}

    \begin{assumption}\label{assump:Sigma_regular}
      The $p\times p$ matrix $\Sigma$ defined by $\Sigma_{ij}
       \coloneqq \innerp{ \partial_i \mu_{\theta_0}, \K \partial_j\mu_{\theta_0} }$ is regular.
    \end{assumption}

    \begin{assumption}\label{assump:convergent_rate}
      $\varepsilon^{-1}\|K^n -K\|_{L^\infty}\to 0$ as $\varepsilon\to0$, $n\to \infty$.
    \end{assumption}

    \begin{assumption}\label{assump:SmoothBoundaryForMorrey}
      $\Theta$ has a smooth boundary.
    \end{assumption}

    \begin{assumption}\label{assump:MuIsC^3}
      The map $\mu_\bullet : \Theta \to \MM$ is in $\CkThM[3]$.
    \end{assumption}
 
  \subsection{MLE via quotient estimator} \label{sec:functional}

    This section presents ``functional" estimator via maximum likelihood form.  
    In this section, we consider the following situation of small noise asymptotic statistical inference.
    \begin{itemize}
      \item Let $Z\sim \mathcal{N}(0,\K)$ and $K$ be the covariance function of $Z$ corresponding to $\K$.
      \item We observe $X^\varepsilon=h_0+\varepsilon Z \sim N(h_0,\varepsilon^2 \K)$ for some $\varepsilon>0$, where 
      \begin{itemize}
        \item Let $\K$ (resp.\ $K$) be the known covariance operator (resp.\ covariance function).
        \item Let $h_0 \coloneqq h_{\theta_0}=\K \mu_{\theta_0}\in \HP$ be an unknown true mean function with $\mu_0 \coloneqq \mu_{\theta_0}\in\MM$ 
              which are parameterized for $\theta_0\in \Theta$.
      \end{itemize}
    \end{itemize} 
    In this situation, once we can construct $\hat{\mu}$, an estimator of $\mu_0\in \MM$, we can also construct the estimator of $h_0$ by $\hat{h}=\K \hat{\mu}$. 
    Therefore, our first goal is to find $\mu_0$. Since $\mu_0$ to give the mean function $h_0=\K\mu_0$ may not necessarily be unique, we prescribe a quotient space for $\mu_0$. 
    
    \begin{notation}\label{note:quotient}
      Let $\MMM$ be the quotient space
      \begin{equation*}
        \MMM\coloneqq \MM /\ker \K,
      \end{equation*}
      with the quotient norm.
    \end{notation}
    
    In order to define estimators on the quotient space $\widetilde{M}$, the following properties are important.

    \begin{proposition}\label{prop:WelldefContinuousForMLEContrast}
      Let $y\in \overline{\K(\MM)}^{\SN\cdot}$.
      Then, the map
      \begin{equation*}
        [\mu] \mapsto \innerp{\mu,y} - \frac12 \innerp{\mu,\K\mu}
      \end{equation*}
      is well-defined; that is, it does not depend on the choice of a representative $[\mu]$. Moreover, the map is continuous with respect to the quotient norm.
    \end{proposition}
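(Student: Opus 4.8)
The plan is to verify the two claims—well-definedness and continuity—essentially simultaneously, by rewriting the functional in a form that manifestly depends only on the equivalence class $[\mu]$ and then extracting a Lipschitz-type estimate with respect to the quotient norm. First I would recall the Hilbert-space machinery from the preliminaries: since $\K = II^*$ with $I:\XP\to\CC$ and $\innerp{f,\K g} = (I^*f, I^*g)_{\XP}$, we have $\innerp{\mu,\K\mu} = \norm{I^*\mu}_{\XP}^2$, so $\ker\K = \{\mu\in\MM : I^*\mu = 0\}$ as a subspace of $L^2(\CC,P_0)$. Thus the quadratic term $\innerp{\mu,\K\mu}$ depends only on $I^*\mu$, hence only on $[\mu]\in\MMM$, and likewise $\mu \mapsto I^*\mu$ factors through $\MMM$. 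This handles the well-definedness of the quadratic part immediately.

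For the linear term $\innerp{\mu,y}$ the key observation is that $y\in\overline{\K(\MM)}^{\SN\cdot}$, so write $y = \lim_k \K\nu_k$ in $\CC$ for some $\nu_k\in\MM$. If $\mu\in\ker\K$, i.e. $\innerp{\mu,\K\nu}=0$ for all $\nu\in\MM$ (equivalently $\K\mu=0$, using symmetry of $\K$), then $\innerp{\mu,\K\nu_k}=0$ for every $k$, and passing to the limit, $\innerp{\mu,y}=\lim_k\innerp{\mu,\K\nu_k}=0$, since $\mu\in\MM$ is a bounded linear functional on $\CC$. Therefore $\innerp{\mu,y}$ also vanishes on $\ker\K$, so it descends to $\MMM$, completing the well-definedness claim. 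I would phrase this as: the functional in the statement equals $\innerp{\mu,y} - \tfrac12\norm{I^*\mu}_{\XP}^2$, and both summands are invariant under $\mu\mapsto\mu+\kappa$ for $\kappa\in\ker\K$.

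For continuity with respect to the quotient norm, fix representatives and estimate the difference of the functional at $[\mu]$ and $[\mu']$. Using the invariance just proved, we may for any $\delta>0$ choose representatives $\mu,\mu'$ with $\MN{\mu}\le \MTN{[\mu]}+\delta$, $\MN{\mu-\mu'}\le \MTN{[\mu]-[\mu']}+\delta$, and similarly control $\MN{\mu'}$. The linear term is $\abs{\innerp{\mu-\mu',y}}\le \SN{y}\,\MN{\mu-\mu'}$. For the quadratic term, $\innerp{\mu,\K\mu}-\innerp{\mu',\K\mu'} = \innerp{\mu-\mu',\K(\mu+\mu')}$, bounded by $\OPN{\K}\,\MN{\mu-\mu'}\,(\MN{\mu}+\MN{\mu'})$, where $\OPN{\K}$ is the operator norm of $\K:\MM\to\CC$ (finite since $K$ is continuous on the compact $T^2$). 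Taking infima over representatives and letting $\delta\to0$ gives a bound of the form $\abs{F([\mu])-F([\mu'])} \le C(1+\MTN{[\mu]}+\MTN{[\mu']})\,\MTN{[\mu]-[\mu']}$, which yields (local Lipschitz, hence) continuity on $\MMM$.

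I do not anticipate a serious obstacle here; the one point requiring a little care is the selection of representatives simultaneously near-optimal for $\MN{\mu}$ and for $\MN{\mu-\mu'}$—but since $\MTN{[\mu]} = \MTN{[\mu']+([\mu]-[\mu'])} \le \MTN{[\mu']}+\MTN{[\mu]-[\mu']}$, it suffices to choose $\mu'$ nearly optimal for $[\mu']$ and then $\mu-\mu'$ nearly optimal for $[\mu]-[\mu']$, setting $\mu := \mu' + (\mu-\mu')$; this automatically controls $\MN{\mu}$ by the triangle inequality. The other mild technical point is confirming that $\ker\K$ (as a subspace of $\MM$) coincides with the kernel appearing in \Cref{note:quotient} and that the quadratic form is genuinely constant on cosets, which follows from $\innerp{\mu+\kappa,\K(\mu+\kappa)} = \innerp{\mu,\K\mu} + 2\innerp{\kappa,\K\mu} + \innerp{\kappa,\K\kappa}$ and $\K\kappa = 0$.
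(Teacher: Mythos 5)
Your proposal is correct and follows essentially the same route as the paper: well-definedness via approximating $y$ by $\K\nu_k$ and using the symmetry of $\K$, and continuity via Cauchy--Schwarz-type bounds that descend to the quotient norm (the paper packages the descent step as \Cref{lem:sym_schwartz}, while you perform it by choosing near-optimal representatives, incidentally obtaining a local Lipschitz estimate).
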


    \begin{proof}
        To see the well-definedness,  
        let $\mu_1$ and $\mu_2$ satisfy $\mu_1-\mu_2\in\ker \K$.
        For fixed $y\in\overline{\K(\M)}$,
        take a sequence $\{\nu_n\}\subset\MM$ such that $\K\nu_n$ converges to $y$ in  $C[0,T]$.
        Since $\K$ is symmetric, 
        it follows that
        \begin{align*}
            \innerp{\mu_1,y} - \innerp{\mu_2,y}
            &= \innerp*{ \mu_1 - \mu_2 , \lim_{n\to\infty} \K \nu_n }
            = \lim_{n\to\infty} \innerp{ \mu_1 - \mu_2 , \K \nu_n } \\
            &= \lim_{n\to\infty} \innerp{ \nu_n , \K (\mu_1 - \mu_2) }
            = 0,
        \end{align*}
        and that
        \begin{equation*}
            \innerp{ \mu_1,\K\mu_1 } - \innerp{ \mu_2,\K\mu_2 }
            = \innerp{ \mu_1+\mu_2,\K(\mu_1-\mu_2) } = 0, 
        \end{equation*}
        which shows the well-definedness.
    
        For the continuity, we suppose that $[\mu_n]\to[\mu]$ in $\MMM$ as $n\to \infty$. 
        Then,
        \begin{align*}
            &\abs{ \qty( \innerp{\mu_n,y} - \frac12 \innerp{\mu_n,\K\mu_n} ) 
                - \qty( \innerp{\mu,y} - \frac12 \innerp{\mu,\K\mu} ) }\\
            &\qquad\qquad\qquad\qquad = \abs{ \innerp*{ \mu_n-\mu, y } } 
                + \frac12 \abs{ \innerp*{ \mu_n+\mu, \K (\mu_n-\mu) } }
        \end{align*}
        When we use $\{\nu_m\}$ taken above again, it follows from \Cref{lem:sym_schwartz} that
        \begin{align*}
            \abs{ \innerp*{ \mu_n-\mu, y } }
            &= \abs{ \innerp*{ \mu_n-\mu, \lim_{m\to\infty} \K \nu_m } } 
            = \lim_{m\to\infty} \abs{ \innerp*{ \mu_n-\mu, \K \nu_m } } \\
            &\leq \lim_{m\to \infty} \SN{\K \nu_m} \MTN{[\mu_n-\mu]}
            = \SN{y} \MTN{[\mu_n]-[\mu]}\to 0,\quad n\to \infty.
        \end{align*}
        Moreover, it follows that 
        \begin{equation*}
            \abs{ \innerp*{ \mu_n+\mu, \K (\mu_n-\mu) } }
            \leq \OPN{\K} \MTN{[\mu_n+\mu]}\MTN{[\mu_n-\mu]}\to 0,
        \end{equation*}
        as $n\to \infty$. This ends the proof.
        \end{proof}
    
    According to \Cref{prop:WelldefContinuousForMLEContrast}, we make a contrast function $\widetilde{\Phi}_\varepsilon$ to estimate $\mu_0$ as follows: 
    \begin{equation*}
      \widetilde{\Phi}_{\varepsilon}([\mu]) \coloneqq \innerp{ \mu,X^\varepsilon } - \frac12 \innerp{ \mu,\K\mu }.
    \end{equation*} 
    Note that
    \[
      \widetilde{\Phi}_{\varepsilon}([\mu]) \asconv \widetilde{\Phi}([\mu]) \coloneqq \innerp{\mu,h_0} - \frac12\innerp{ \mu,\K\mu }
      \quad (\varepsilon\to0).
    \]
    
    \begin{proposition} \label{prop:uniquemaximum}
      The map $[\mu] \mapsto \innerp{\mu,h_0} - \frac12 \innerp{\mu,\K\mu}$ admits the unique maximum point $[\mu_0]$.
    \end{proposition}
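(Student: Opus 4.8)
First note that the map is well defined on $\MMM$ by \Cref{prop:WelldefContinuousForMLEContrast}, since $h_0=\K\mu_0\in\K(\MM)\subset\overline{\K(\MM)}^{\SN\cdot}$. The plan is simply to complete the square. Writing $\widetilde{\Phi}([\mu])=\innerp{\mu,h_0}-\tfrac12\innerp{\mu,\K\mu}$ and using $h_0=\K\mu_0$ together with the symmetry of $\K$ (so that $\innerp{\mu,\K\mu_0}=\innerp{\mu_0,\K\mu}$), a direct computation gives
\begin{equation*}
  \widetilde{\Phi}([\mu])-\widetilde{\Phi}([\mu_0])
  = -\frac12 \innerp{\mu-\mu_0,\ \K(\mu-\mu_0)} .
\end{equation*}
Both sides depend only on $[\mu]$, again by \Cref{prop:WelldefContinuousForMLEContrast}.

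Next I would invoke positive semi-definiteness of the covariance operator: for every $\nu\in\MM$ one has $\innerp{\nu,\K\nu}=\mathrm{Var}[\innerp{\nu,Z}]=(I^*\nu,I^*\nu)_{\XP}\ge 0$. Hence the displayed identity shows $\widetilde{\Phi}([\mu])\le\widetilde{\Phi}([\mu_0])$ for every $[\mu]$, so $[\mu_0]$ is a maximum point. For uniqueness, suppose $\widetilde{\Phi}([\mu])=\widetilde{\Phi}([\mu_0])$; then $\innerp{\mu-\mu_0,\K(\mu-\mu_0)}=0$. Putting $\nu\coloneqq\mu-\mu_0$ and applying the Cauchy--Schwarz inequality for the symmetric positive semi-definite bilinear form $(\rho,\sigma)\mapsto\innerp{\rho,\K\sigma}$ (i.e.\ \Cref{lem:sym_schwartz}) yields $\abs{\innerp{\rho,\K\nu}}^2\le\innerp{\rho,\K\rho}\,\innerp{\nu,\K\nu}=0$ for all $\rho\in\MM$; taking $\rho=\delta_t$ gives $(\K\nu)(t)=0$ for every $t\in T$, i.e.\ $\nu\in\ker\K$. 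Therefore $[\mu]=[\mu_0]$ in $\MMM$.

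The computation itself is routine; the only point that needs care is that uniqueness is being asserted in the quotient $\MMM=\MM/\ker\K$ and not in $\MM$ (where it is false in general), which is precisely why $\ker\K$ was quotiented out. This step hinges on the identity $\ker\K=\{\nu\in\MM:\innerp{\nu,\K\nu}=0\}$, which follows from $\K=II^*$ and injectivity of $I$ (equivalently, from the Cauchy--Schwarz argument above), so I would make sure that identification is recorded before passing to the quotient.
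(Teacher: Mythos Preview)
Your argument is correct and follows essentially the same route as the paper: complete the square to reduce the question to showing that $\innerp{\nu,\K\nu}=0$ forces $\nu\in\ker\K$. The only cosmetic difference is in that last implication: the paper writes $\innerp{\nu,\K\nu}=\norm{I^*\nu}_{\XP}^2$ and uses injectivity of $I$ to conclude $\K\nu=II^*\nu=0$, whereas you invoke Cauchy--Schwarz for the semi-inner product $(\rho,\sigma)\mapsto\innerp{\rho,\K\sigma}$ and test against $\rho=\delta_t$. One small caveat: \Cref{lem:sym_schwartz} as stated in the paper does not record that Cauchy--Schwarz inequality, so your parenthetical citation is slightly off; but the inequality itself is elementary and your alternative justification via $\K=II^*$ is exactly what the paper uses.
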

    
    \begin{proof}
      Recall that $h_0=\K\mu_0$ set earlier in this section.
      Since 
      \begin{equation*}
        \innerp{\mu,\K\mu_0} - \frac12 \innerp{\mu,\K\mu} 
        = - \frac12 \innerp{\mu-\mu_0,\K(\mu-\mu_0)}
          + \frac12 \innerp{\mu_0,\K\mu_0},
      \end{equation*}
      $[\mu]\in\MMM$ achieves the maximum of $[\mu]\mapsto\innerp{\mu,y} - \frac12 \innerp{\mu,\K\mu}$ if and only if $\innerp{\mu-\mu_0,\K(\mu-\mu_0)}=0$.
      Thus, when $[\mu]\in\MMM$ is a maximum point, its representative $\mu$ always satisfies
      \begin{equation*}
        \norm{I^*(\mu-\mu_0)}_{\HP}^2
        = \innerp{\mu-\mu_0,\K(\mu-\mu_0)}
        = 0.
      \end{equation*}
      This implies that $\K(\mu-\mu_0)=II^*(\mu-\mu_0)=0$, i.e., $[\mu]=[\mu_0]$.
    \end{proof}

          \begin{lemma}\label{prop:InclusionsKXHpSuppP}
            $\overline{\K(\MM)}^{\SN\cdot}
              = \overline{H_{P_0}}^{\SN\cdot} 
              = \supp P_0$.
      \end{lemma}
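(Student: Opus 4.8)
The plan is to prove the two equalities
\[
\overline{\K(\MM)}^{\SN\cdot}
= \overline{\HP}^{\SN\cdot}
= \supp P_0
\]
separately, exploiting the factorization $\K = II^*$ and the standard characterization of the support of a (centered) Gaussian measure as the closure of its Cameron--Martin space. First I would recall the general facts that are available from the earlier setup: $I:\XP\to\X$ with $\HP = I(\XP)$ by definition of the Hilbert norm on $\HP$, the factorization $\K_0 = II^*$, and the fact (classical, e.g. Lifshits) that for a centered Gaussian measure $P_0$ on a locally convex space one has $\supp P_0 = \overline{\HP}$, with closure taken in the topology of $\X = \CC$, i.e. with respect to $\SN\cdot$. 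That immediately gives the second equality $\overline{\HP}^{\SN\cdot} = \supp P_0$, so the real content is the first equality.

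For $\overline{\K(\MM)}^{\SN\cdot} \subseteq \overline{\HP}^{\SN\cdot}$, it suffices to show $\K(\MM) \subseteq \HP$: for any $\mu\in\MM$ the element $f = I^*\mu\in\XP$ satisfies $If = II^*\mu = \K\mu$, so $\K\mu\in I(\XP) = \HP$, and taking closures gives the inclusion. For the reverse inclusion $\overline{\HP}^{\SN\cdot}\subseteq\overline{\K(\MM)}^{\SN\cdot}$, it is enough to show that $\HP\subseteq\overline{\K(\MM)}^{\SN\cdot}$, i.e. that every $h = Iz$ with $z\in\XP$ can be $\SN\cdot$-approximated by elements of the form $\K\mu_n = II^*\mu_n$. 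Since $\XP$ is by construction the $L^2(\X,P_0)$-closure of $I^*(\X^*)$, and since — crucially — $\X^* = M(T)^{**} \supseteq M(T)$ via evaluation, one can in fact take $z$ to be an $L^2$-limit of elements $I^*\mu_n$ with $\mu_n\in\MM$ (here I would use that the continuous functionals coming from $\MM$ are already dense in $\XP$, or alternatively that point evaluations $\delta_t\in\MM$ and their finite linear combinations span a dense subspace — this uses that $T$ is a compact interval and $\X = \CC$). Then continuity of $I:\XP\to\X$ gives $\K\mu_n = I(I^*\mu_n)\to Iz = h$ in $\SN\cdot$, as desired.

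The main obstacle is the reverse inclusion $\HP\subseteq\overline{\K(\MM)}^{\SN\cdot}$, specifically justifying that $I^*(\MM)$ is dense in $\XP$ rather than merely $I^*(\X^*)$ being dense. The subtlety is that $\X^* = \CC^*$ is strictly larger than $\MM$ only if one is not careful — in fact by the Riesz representation theorem $\CC^* = \MM$ when $T$ is a compact interval, so $I^*(\X^*) = I^*(\MM)$ and the density is immediate from the very definition of $\XP$. I would state this identification explicitly as the one nontrivial input, cite the Riesz--Markov theorem, and then the argument closes cleanly: $\XP = \overline{I^*(\MM)}^{L^2(P_0)}$, so $\HP = I(\XP) \subseteq \overline{I(I^*(\MM))}^{\SN\cdot} = \overline{\K(\MM)}^{\SN\cdot}$ by continuity of $I$, and combining with the first inclusion and $\supp P_0 = \overline{\HP}^{\SN\cdot}$ finishes the proof.
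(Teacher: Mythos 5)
Your proposal is correct and follows essentially the same route as the paper: both reduce the problem to showing $\HP\subset\overline{\K(\MM)}^{\SN\cdot}$, write $h=Iz$ with $z$ approximated in $\XP$ by $I^*\mu_n$ for $\mu_n\in\MM$ (using that $\CC^*=\MM$, so $I^*(\X^*)=I^*(\MM)$ is dense in $\XP$ by definition), and conclude via the boundedness of $I$ and $\K=II^*$. Your initial aside identifying $\X^*$ with $M(T)^{**}$ is a slip, but you correct it yourself with the Riesz representation identification, so the argument as closed is sound.
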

    
      \begin{proof}
          Since $\K(\MM)\subset \overline{H_{P_0}}^{\SN\cdot} = \supp P_0$ (cf. Theorem 9.6 in \textcite{lifshits1995gasussian}),
          it is sufficient to show that $\HP\subset\overline{\K(\M)}^{\SN\cdot}$.
        Take an arbitrary $h\in\HP$.
        Then, there exist $z\in\XP$ and $\mu_n\in\MM$ such that $h=Iz$ and $\norm{z-I^*\mu_n}_{\XP}\to0$.
        Since $\K=II^*$, it follows from \Cref{prop:OpnormOfIandI*} that
        \begin{equation*}
          \SN{h-\K \mu_0} 
          \leq \norm{\K}_{\mathrm{op}}^{1/2} \norm{z-I^*\mu_n}_{\XP} 
          \to 0,
        \end{equation*}
        as $n\to\infty$.
      \end{proof}

    \begin{proposition} \label{prop:nonempty1}
      Assume
      $\widetilde{\Theta} \subset \MMM$ is compact and $[\mu_0] \in \widetilde{\Theta}$,
      then $\displaystyle \argmax_{[\mu] \in \widetilde{\Theta}} \widetilde{\Phi}_\varepsilon(\theta)$ is nonempty.
    \end{proposition}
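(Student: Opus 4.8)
The plan is to reduce the statement to the Weierstrass extreme value theorem: once $\widetilde{\Phi}_\varepsilon$ is known to be a continuous real-valued function on the quotient space $\MMM$ equipped with the quotient norm, and $\widetilde{\Theta}$ is a nonempty compact subset of $\MMM$ (nonempty precisely because $[\mu_0]\in\widetilde{\Theta}$), the supremum of $\widetilde{\Phi}_\varepsilon$ over $\widetilde{\Theta}$ is attained, which is exactly the assertion that $\argmax_{[\mu]\in\widetilde{\Theta}}\widetilde{\Phi}_\varepsilon([\mu])$ is nonempty.

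First I would verify that $\widetilde{\Phi}_\varepsilon$ is genuinely well-defined and continuous on $\MMM$, which is the content of \Cref{prop:WelldefContinuousForMLEContrast} applied with $y=X^\varepsilon$, provided $X^\varepsilon\in\overline{\K(\MM)}^{\SN\cdot}$. Here $X^\varepsilon=h_0+\varepsilon Z$ with $h_0=\K\mu_0\in\K(\MM)$ and $Z\sim N(0,\K)$, and $Z\in\supp P_0$ holds $\Prob$-almost surely; hence by \Cref{prop:InclusionsKXHpSuppP} one has $\varepsilon Z\in\supp P_0=\overline{\K(\MM)}^{\SN\cdot}$ $\Prob$-a.s. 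Since $\overline{\K(\MM)}^{\SN\cdot}$ is a closed linear subspace of $\CC$, it follows that $X^\varepsilon\in\overline{\K(\MM)}^{\SN\cdot}$ $\Prob$-a.s., so that outside a $\Prob$-null set $\widetilde{\Phi}_\varepsilon:\MMM\to\R$ is well-defined and continuous.

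With continuity in hand the remaining step is routine: restrict $\widetilde{\Phi}_\varepsilon$ to the compact set $\widetilde{\Theta}$; its image is a compact, hence closed and bounded, subset of $\R$, so $\sup_{[\mu]\in\widetilde{\Theta}}\widetilde{\Phi}_\varepsilon([\mu])$ is finite and achieved, i.e.\ $\argmax_{[\mu]\in\widetilde{\Theta}}\widetilde{\Phi}_\varepsilon([\mu])\neq\emptyset$. There is no substantive obstacle; the only point needing care is the measure-theoretic caveat, namely that the continuity of $\widetilde{\Phi}_\varepsilon$, and hence nonemptiness of the $\argmax$, holds only $\Prob$-almost surely, which is the intended reading in this stochastic setting and suffices for all later uses. (The displayed $\widetilde{\Phi}_\varepsilon(\theta)$ in the statement should be read as $\widetilde{\Phi}_\varepsilon([\mu])$.)
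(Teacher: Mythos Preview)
Your proof is correct and follows essentially the same approach as the paper's: both invoke \Cref{prop:WelldefContinuousForMLEContrast} (continuity of the contrast on $\MMM$) together with \Cref{prop:InclusionsKXHpSuppP} (to ensure $X^\varepsilon\in\overline{\K(\MM)}^{\SN\cdot}$ almost surely) and then apply the extreme value theorem on the compact set $\widetilde{\Theta}$. You have simply spelled out in more detail the step the paper leaves implicit, namely why $X^\varepsilon=h_0+\varepsilon Z$ lies in $\overline{\K(\MM)}^{\SN\cdot}$ almost surely, and correctly flagged the typo $\widetilde{\Phi}_\varepsilon(\theta)$ for $\widetilde{\Phi}_\varepsilon([\mu])$.
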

  
    \begin{proof}
      Since $\widetilde{\Theta}$ is closed and bounded, it follows from  \Cref{prop:WelldefContinuousForMLEContrast,prop:InclusionsKXHpSuppP} and the maximum principle that $\widetilde{\Phi}_\varepsilon$ admits a maximum point on $\widetilde{\Theta}$ almost surely. 
    \end{proof}
  
    By virtue of \Cref{prop:nonempty1}, we define an MLE $\hat{h}_\varepsilon$ of the mean function via the \emph{quotient estimator} $[\hat{\mu}_\varepsilon]$ as
    \begin{equation*}
        \hat{h}_\varepsilon \coloneqq \K \hat{\mu}_\varepsilon
        \quad \text{via} \quad
      [\hat{\mu}_\varepsilon] \coloneqq \argmax_{[\mu]\in \widetilde{\Theta}} \widetilde{\Phi}_\varepsilon.
    \end{equation*} 
  
    \begin{theorem}[Consistency]\label{thm:functional_convergence}
      Under the same assumptions as in \Cref{prop:nonempty1},
      \begin{equation*}
          [\hat{\mu}_\varepsilon] \asconv [\mu_0] \ \  \text{in} \ \MMM
          \quad
          \hat{h}_\varepsilon \asconv h_0 \ \  \text{in} \ \CC \  \text{and} \  \HP.
      \end{equation*}
      as $\varepsilon\to0$.
    \end{theorem}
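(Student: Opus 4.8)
The plan is to run the classical argmax-consistency argument for M-estimators, carried out pathwise on the compact set $\widetilde{\Theta}\subset\MMM$, and then to push the conclusion forward through the operator $\K$.

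\emph{Uniform convergence of the contrast.} First I would note that the estimation error enters only through a linear term: $\widetilde{\Phi}_\varepsilon([\mu])-\widetilde{\Phi}([\mu])=\innerp{\mu,X^\varepsilon-h_0}=\varepsilon\innerp{\mu,Z}$. Since $Z\in\supp P_0=\overline{\K(\MM)}^{\SN\cdot}$ almost surely by \Cref{prop:InclusionsKXHpSuppP}, the same computation as in the proof of \Cref{prop:WelldefContinuousForMLEContrast} (using \Cref{lem:sym_schwartz}) shows that $[\mu]\mapsto\innerp{\mu,Z}$ is a well-defined linear functional on $\MMM$ with $\abs{\innerp{\mu,Z}}\le\SN{Z}\,\MTN{[\mu]}$. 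Because $\widetilde{\Theta}$ is compact, hence bounded, in $\MMM$, this gives, on the almost-sure event $\{Z\in\CC\}$,
\[
  \sup_{[\mu]\in\widetilde{\Theta}}\abs{\widetilde{\Phi}_\varepsilon([\mu])-\widetilde{\Phi}([\mu])}
  \le\varepsilon\,\SN{Z}\,\sup_{[\mu]\in\widetilde{\Theta}}\MTN{[\mu]}\xrightarrow[\varepsilon\to0]{}0 .
\]

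\emph{From uniform convergence to consistency.} Next I would fix $\omega$ in that full-measure event and argue deterministically. By \Cref{prop:WelldefContinuousForMLEContrast} (with $y=h_0\in\K(\MM)$) the limit $\widetilde{\Phi}$ is continuous on $\MMM$, and by \Cref{prop:uniquemaximum} it attains its maximum over $\widetilde{\Theta}$ only at $[\mu_0]$. For any sequence $\varepsilon_k\downarrow0$, compactness of $\widetilde{\Theta}$ extracts a subsequence along which $[\hat\mu_{\varepsilon_k}]\to[\mu^\ast]\in\widetilde{\Theta}$; uniform convergence together with continuity of $\widetilde{\Phi}$ give $\widetilde{\Phi}_{\varepsilon_k}([\hat\mu_{\varepsilon_k}])\to\widetilde{\Phi}([\mu^\ast])$, whereas $\widetilde{\Phi}_{\varepsilon_k}([\hat\mu_{\varepsilon_k}])\ge\widetilde{\Phi}_{\varepsilon_k}([\mu_0])\to\widetilde{\Phi}([\mu_0])$ since $[\hat\mu_\varepsilon]$ maximizes $\widetilde{\Phi}_\varepsilon$ over $\widetilde{\Theta}\ni[\mu_0]$ (nonempty by \Cref{prop:nonempty1}). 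Hence $\widetilde{\Phi}([\mu^\ast])\ge\widetilde{\Phi}([\mu_0])$, so $[\mu^\ast]=[\mu_0]$ by uniqueness; as this holds along some subsequence of every $\varepsilon_k\downarrow0$, the full limit is $[\mu_0]$, i.e., $[\hat\mu_\varepsilon]\asconv[\mu_0]$ in $\MMM$.

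\emph{Pushing forward through $\K$.} Finally, $\K$ annihilates $\ker\K$, so it factors as a map $\widetilde{\K}:\MMM\to\CC$, $[\mu]\mapsto\K\mu$; for every $\nu\in\ker\K$, $\SN{\K\mu}=\SN{\K(\mu-\nu)}\le\norm{K}_{L^\infty}\MN{\mu-\nu}$, so $\SN{\K\mu}\le\norm{K}_{L^\infty}\MTN{[\mu]}$, i.e., $\widetilde{\K}$ is bounded. Continuity then yields $\hat h_\varepsilon=\widetilde{\K}[\hat\mu_\varepsilon]\asconv\widetilde{\K}[\mu_0]=h_0$ in $\CC$. For the $\HP$-convergence I would use $\K=II^*$ and $I^{-1}(\K\nu)=I^*\nu$ to get $\norm{\K\nu}_{\HP}^2=(I^*\nu,I^*\nu)_{\XP}=\innerp{\nu,\K\nu}$ for $\nu\in\MM$; taking $\nu=\hat\mu_\varepsilon-\mu_0$ and using $\innerp{\nu,\K\nu}\le\OPN{\K}\MTN{[\nu]}^2$ (as established inside the proof of \Cref{prop:WelldefContinuousForMLEContrast}) gives $\norm{\hat h_\varepsilon-h_0}_{\HP}^2\le\OPN{\K}\MTN{[\hat\mu_\varepsilon]-[\mu_0]}^2\to0$ almost surely.

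\emph{Main obstacle.} The one point needing care is the uniform-convergence step: a fixed representative $\innerp{\mu,Z}$ is \emph{not} controlled by $\MTN{[\mu]}$, so one must genuinely verify that the error functional descends to $\MMM$ and is $\MTN{\cdot}$-Lipschitz — this is precisely where $Z\in\overline{\K(\MM)}^{\SN\cdot}$ a.s.\ is used. A minor additional issue, if one wants the statement to be literally about a random element, is measurability of the selection $[\hat\mu_\varepsilon]$; this follows from a measurable-selection theorem since $[\mu]\mapsto\widetilde{\Phi}_\varepsilon([\mu])$ is continuous on the compact metric space $\widetilde{\Theta}$.
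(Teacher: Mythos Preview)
Your proof is correct and follows essentially the same route as the paper: uniform convergence of $\widetilde{\Phi}_\varepsilon$ to $\widetilde{\Phi}$ on the compact $\widetilde{\Theta}$, uniqueness of the maximizer of $\widetilde{\Phi}$, and then continuity of $[\mu]\mapsto\K\mu$ into $\CC$ and $\HP$. The only cosmetic difference is that the paper packages the argmax step by invoking its abstract M-estimator result (\Cref{prop:M-estimator}) together with the separation inequality $\widetilde{\Phi}([\mu_0])>\sup_{\MTN{[\mu]-[\mu_0]}\ge\delta}\widetilde{\Phi}([\mu])$, whereas you carry out the compactness/subsequence argument by hand; your explicit justification that $[\mu]\mapsto\innerp{\mu,Z}$ descends to $\MMM$ via $Z\in\overline{\K(\MM)}^{\SN\cdot}$ a.s.\ is a point the paper leaves implicit.
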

    
    \begin{proof}
      Since $\widetilde{\Theta}$ is bounded, 
      \begin{equation}\label{eq:func_conti_contrast_uni_conv}
        \sup_{[\mu] \in \widetilde{\Theta}} \abs{ \widetilde{\Phi}_{\varepsilon} ([\mu])- \widetilde{\Phi}([\mu]) }
        = \sup_{[\mu] \in \widetilde{\Theta}} \varepsilon \abs{ \innerp*{ \mu, Z } }
        \asconv 0
      \end{equation} 
      as $\varepsilon \to 0$. 
      It follows from \Cref{prop:WelldefContinuousForMLEContrast,prop:uniquemaximum} that,
      for any $\delta>0$,
      \begin{equation*}
        \widetilde{\Phi}\left([\mu_0]\right) > \sup_{\MTN{[\mu]-[\mu_0]}\geq\delta} \widetilde{\Phi}\left([\mu]\right).
      \end{equation*}
      By the definition of $\hat{\theta}_{\varepsilon}$, 
      \begin{equation*}
        \widetilde{\Phi}_{\varepsilon}([\hat{\mu}_{\varepsilon}]) \ge \widetilde{\Phi}_{\varepsilon}([\mu_0]),
      \end{equation*}
      Thus, it follows from \Cref{prop:M-estimator} that $[\hat{\mu}_{\varepsilon}] \asconv [\mu_0]$ in $\MMM$
      as $\varepsilon\to0$.
      By \Cref{lem:sym_schwartz}, the map $[\mu]  \mapsto \K\mu$ is continuous as the map $\MMM \to \CC$ or $\MMM \to \HP$,
      and therefore, it follows by continuous mapping theorem that
      $\hat{h}_\varepsilon \asconv h_0$ in $\CC$
      as $\varepsilon \to 0$.
    \end{proof}

  \subsection{Parametric inference under continuous observation}\label{sec:parametric} 

    This section considers parametric inference from continuous observation by applying \Cref{sec:functional}.

    \subsubsection{Construction of estimators and Consistency} \label{subsec:conti_consistency}

      In this section and subsequent sections, we consider the case that the true mean function is parameterized as $h_0=\K \mu_{\theta_0} \in \K(\M)$ ($\theta_0 \in \Theta \subset \R^p$: parameter space).
      We construct the maximum likelihood estimator of $h_{\theta_0}$ through estimating $\theta_0$ and $\mu_{\theta_0}$.

      \begin{definition}  
        Let $\Phi_\varepsilon$ and $\Phi$ be the following functions: 
        \begin{align*}
            \Phi_\varepsilon(\theta) 
            &\coloneqq \varepsilon^2 \qty{ \log \dv{P^\varepsilon_{h_\theta}}{P^\varepsilon_0}\qty(X^\varepsilon) } 
                = \innerp*{\mu_\theta,X^\varepsilon} - \frac12 \innerp{ \mu_\theta,\K\mu_\theta }, \\
            \Phi(\theta) 
            &\coloneqq \innerp{ \mu_\theta,h_0 } - \frac12 \innerp{ \mu_\theta,\K\mu_\theta }
            = -\frac12 \innerp{ \mu_\theta - \mu_{\theta_0}, \K (\mu_\theta - \mu_{\theta_0})} + \frac12 \innerp{ \mu_{\theta_0}, \K \mu_{\theta_0} }.
        \end{align*}
        Note that $\Phi$ is the almost sure limit of $\Phi_\varepsilon$ as $\varepsilon\to 0$. 
      \end{definition}

      Under \Cref{assump:Theta,assump:cont,assump:isolated}, we have the following two propositions by the similar arguments as in \Cref{prop:WelldefContinuousForMLEContrast,prop:uniquemaximum,prop:nonempty1}.
    
      \begin{proposition}\label{prop:ContinuousContrast_UniqueMaximum}
        Under \Cref{assump:cont}, for $y\in\overline{\K(\M)}^{\SN\cdot}$, the map 
        \begin{equation*}
            \Theta \to \mathbb{R}, \quad 
            \theta\mapsto\innerp{\mu_\theta,y} - \frac12 \innerp{\mu_\theta,\K\mu_\theta}
        \end{equation*}
        is continuous.
        If \Cref{assump:isolated} also holds, then $\Phi$ 
        admits the unique maximum point $\theta_0$.
      \end{proposition}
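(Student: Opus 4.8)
The plan is to obtain both assertions by precomposing the quotient-level results of \Cref{prop:WelldefContinuousForMLEContrast,prop:uniquemaximum} with the parametrization $\theta\mapsto\mu_\theta$. For the continuity statement: by \Cref{prop:WelldefContinuousForMLEContrast}, for $y\in\overline{\K(\MM)}^{\SN\cdot}$ the map $[\mu]\mapsto\innerp{\mu,y}-\frac12\innerp{\mu,\K\mu}$ is well-defined on $\MMM$ and continuous for the quotient norm. Composing it with the canonical quotient map $\MM\to\MMM$, which is continuous because the quotient norm is dominated by $\MN{\cdot}$, shows that $\mu\mapsto\innerp{\mu,y}-\frac12\innerp{\mu,\K\mu}$ is continuous on $\MM$. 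Under \Cref{assump:cont} the curve $\theta\mapsto\mu_\theta$ is continuous from $\Theta$ into $\MM$, so one more composition yields the asserted continuity of $\theta\mapsto\innerp{\mu_\theta,y}-\frac12\innerp{\mu_\theta,\K\mu_\theta}$.

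For the uniqueness statement I would take $y=h_0=\K\mu_{\theta_0}\in\K(\MM)$ and use the completed-square identity already recorded in the definition of $\Phi$, namely
\[
  \Phi(\theta)=-\tfrac12\innerp{\mu_\theta-\mu_{\theta_0},\K(\mu_\theta-\mu_{\theta_0})}+\tfrac12\innerp{\mu_{\theta_0},\K\mu_{\theta_0}}.
\]
Since $\K$ is a covariance operator, $\innerp{\nu,\K\nu}\ge0$ for every $\nu\in\MM$, hence $\Phi(\theta)\le\Phi(\theta_0)$ with equality exactly when $\innerp{\mu_\theta-\mu_{\theta_0},\K(\mu_\theta-\mu_{\theta_0})}=0$. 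As in the proof of \Cref{prop:uniquemaximum}, writing $\K=II^*$ with $I$ injective turns this into $\K(\mu_\theta-\mu_{\theta_0})=II^*(\mu_\theta-\mu_{\theta_0})=0$, i.e. $\K\mu_\theta(t)=\K\mu_{\theta_0}(t)$ for every $t\in T$; \Cref{assump:isolated} then forces $\theta=\theta_0$, so $\theta_0$ is the unique maximizer of $\Phi$.

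I do not expect a genuine obstacle: the substantive points — the independence of $\innerp{\mu,y}-\frac12\innerp{\mu,\K\mu}$ from the chosen representative of $[\mu]$, and the passage from vanishing of the $\K$-quadratic form to equality of the mean functions — are exactly what \Cref{prop:WelldefContinuousForMLEContrast,prop:uniquemaximum} supply, so the only fresh input is the continuity of $\theta\mapsto\mu_\theta$ from \Cref{assump:cont}. The one point needing a little care is that in the continuity claim $y$ may lie on the boundary $\overline{\K(\MM)}^{\SN\cdot}\setminus\K(\MM)$; but that is precisely the situation \Cref{prop:WelldefContinuousForMLEContrast} was designed to handle, via an approximating sequence $\K\nu_m\to y$ in $\CC$, whereas for the uniqueness claim $y=h_0$ already lies in $\K(\MM)$ and no such subtlety arises.
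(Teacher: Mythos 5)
Your proposal is correct and follows essentially the route the paper intends: the paper itself derives this proposition simply ``by the similar arguments as in \Cref{prop:WelldefContinuousForMLEContrast,prop:uniquemaximum,prop:nonempty1}'', i.e.\ precomposing the quotient-level continuity with the continuous map $\theta\mapsto\mu_\theta$ and re-running the completed-square argument with $\K=II^*$ to reduce uniqueness to \Cref{assump:isolated} (read with its evident typo corrected to ``$\theta=\theta_0$''). The only cosmetic slip is the appeal to ``$I$ injective'': what the argument actually uses is that $\innerp{\nu,\K\nu}=\norm{I^*\nu}_{\XP}^2=0$ forces $I^*\nu=0$, whence $\K\nu=I(I^*\nu)=0$ — no injectivity of $I$ is needed.
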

  
      \begin{proposition}\label{prop:AbleToChooseEstimator}
        Under \Cref{assump:isolated}, $\argmax_{\theta\in\Theta} \Phi_\varepsilon(\theta)$ is nonempty.
      \end{proposition}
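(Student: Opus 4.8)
The plan is to imitate the proof of \Cref{prop:nonempty1}, replacing the quotient contrast $\widetilde{\Phi}_\varepsilon$ on a compact subset of $\MMM$ by the parametric contrast $\Phi_\varepsilon$ on $\Theta$ itself. First I would note that $\Theta$ is compact by \Cref{assump:Theta}, so by the Weierstrass extreme value theorem it suffices to show that the random map $\theta\mapsto\Phi_\varepsilon(\theta)=\innerp{\mu_\theta,X^\varepsilon}-\frac12\innerp{\mu_\theta,\K\mu_\theta}$ is almost surely continuous on $\Theta$. Continuity will then be supplied by the first half of \Cref{prop:ContinuousContrast_UniqueMaximum} once we know that the observation $X^\varepsilon$ lies in $\overline{\K(\MM)}^{\SN\cdot}$, which is exactly the hypothesis ``$y\in\overline{\K(\MM)}^{\SN\cdot}$'' of that proposition.

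The key step is therefore to verify that $X^\varepsilon\in\overline{\K(\MM)}^{\SN\cdot}$ almost surely. Since $X^\varepsilon=h_0+\varepsilon Z$ with $h_0=\K\mu_{\theta_0}\in\K(\MM)$, and since $Z$ takes values in $\supp P_0$ with probability one, \Cref{prop:InclusionsKXHpSuppP} gives $\varepsilon Z\in\supp P_0=\overline{\K(\MM)}^{\SN\cdot}$ almost surely. Because $\overline{\K(\MM)}^{\SN\cdot}$ is a linear subspace of $\CC$ that contains $h_0$, adding $h_0$ keeps us inside it, so $X^\varepsilon\in\overline{\K(\MM)}^{\SN\cdot}$ almost surely. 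Conditionally on this almost-sure event I would then apply \Cref{prop:ContinuousContrast_UniqueMaximum} with $y=X^\varepsilon$ (using \Cref{assump:cont}) to conclude that $\theta\mapsto\Phi_\varepsilon(\theta)$ is continuous on $\Theta$.

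Putting the two pieces together: on a set of probability one, $\Phi_\varepsilon$ is a real-valued continuous function on the compact set $\Theta$, hence attains its supremum, so $\argmax_{\theta\in\Theta}\Phi_\varepsilon(\theta)\neq\varnothing$ almost surely. I do not expect a genuine obstacle here; the only point requiring care is the almost-sure/measurability bookkeeping, namely ensuring that the event on which $X^\varepsilon$ lands in $\overline{\K(\MM)}^{\SN\cdot}$ (equivalently, on which the argmax is nonempty) is measurable and of full probability — and this is precisely what \Cref{prop:InclusionsKXHpSuppP} is designed to provide. Note that \Cref{assump:isolated} itself is not needed for mere nonemptiness; it is listed alongside \Cref{assump:cont,assump:Theta} only because the companion statement \Cref{prop:ContinuousContrast_UniqueMaximum} uses it for the uniqueness of the maximizer.
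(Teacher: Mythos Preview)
Your proposal is correct and follows essentially the same approach as the paper. The paper does not spell out a separate proof here but simply states that the result follows ``by the similar arguments as in \Cref{prop:WelldefContinuousForMLEContrast,prop:uniquemaximum,prop:nonempty1}'' under \Cref{assump:Theta,assump:cont,assump:isolated}; your reconstruction---compactness of $\Theta$, the inclusion $X^\varepsilon\in\supp P_0=\overline{\K(\MM)}^{\SN\cdot}$ via \Cref{prop:InclusionsKXHpSuppP}, continuity from \Cref{prop:ContinuousContrast_UniqueMaximum}, and the extreme value theorem---is exactly what the paper intends, and your observation that \Cref{assump:isolated} is not actually needed for mere nonemptiness is accurate.
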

          
      By virtue of \Cref{prop:AbleToChooseEstimator}, we define an MLE $\hat{h}_\varepsilon$
      \begin{equation*}
          \hat{\theta}_\varepsilon \coloneqq \argmax_{\theta\in \Theta} \Phi_\varepsilon (\theta),
          \quad 
          \hat{\mu}_\varepsilon\coloneqq \mu_{\hat{\theta}_\varepsilon}
          \quad \text{and} \quad
          \hat{h}_\varepsilon \coloneqq \K \hat{\mu}_\varepsilon.  
      \end{equation*} 
    
      \begin{theorem}[Consistency]\label{thm:parametric consistency}     
        Under \Cref{assump:Theta,assump:cont,assump:isolated},
        it holds that
        \begin{equation*}
            \hat{\theta}_\varepsilon \asconv \theta_0 \ \  \text{in} \  \mathbb{R}^p,
            \quad 
            \hat{\mu}_\varepsilon \asconv \mu_0 \ \  \text{in} \  \MM,
            \quad 
            \hat{h}_\varepsilon \asconv h_0 \ \  \text{in} \  \CC \  \text{and} \  \HP.
        \end{equation*}
        as $\varepsilon\to 0 $.
      \end{theorem}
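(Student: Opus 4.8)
The plan is to run the classical M-estimator consistency argument, mirroring the proof of \Cref{thm:functional_convergence} but now on the finite-dimensional parameter space $\Theta$ instead of the quotient space $\MMM$. The three ingredients needed are: (i) uniform (a.s.) convergence of the random contrast $\Phi_\varepsilon$ to the deterministic limit $\Phi$ on $\Theta$; (ii) that $\Phi$ has a well-separated maximum at $\theta_0$; and (iii) the general M-estimator lemma \Cref{prop:M-estimator}. Then consistency of $\hat\mu_\varepsilon$ and $\hat h_\varepsilon$ follows by continuous mapping.

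\textbf{Step 1 (uniform convergence).} Since $X^\varepsilon=h_0+\varepsilon Z$, one has $\Phi_\varepsilon(\theta)-\Phi(\theta)=\varepsilon\innerp{\mu_\theta,Z}$, hence
\begin{equation*}
  \sup_{\theta\in\Theta}\abs{\Phi_\varepsilon(\theta)-\Phi(\theta)}
  =\varepsilon\sup_{\theta\in\Theta}\abs{\innerp{\mu_\theta,Z}}
  \le \varepsilon\,\SN{Z}\,\sup_{\theta\in\Theta}\MN{\mu_\theta}.
\end{equation*}
Under \Cref{assump:cont,assump:Theta} the map $\theta\mapsto\mu_\theta$ is continuous on the compact set $\Theta$, so $\sup_{\theta\in\Theta}\MN{\mu_\theta}<\infty$; and $\SN{Z}<\infty$ a.s.\ because $Z$ has continuous paths. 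Therefore the left-hand side tends to $0$ a.s.\ as $\varepsilon\to0$, which in particular re-derives that $\Phi$ is the a.s.\ limit of $\Phi_\varepsilon$.

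\textbf{Step 2 (well-separated maximum) and Step 3 (M-estimator lemma).} By \Cref{prop:ContinuousContrast_UniqueMaximum}, which uses \Cref{assump:cont,assump:isolated}, the function $\Phi$ is continuous on $\Theta$ and attains its maximum only at $\theta_0$; compactness of $\Theta$ then gives, for every $\delta>0$, $\Phi(\theta_0)>\sup_{\abs{\theta-\theta_0}\ge\delta}\Phi(\theta)$. By \Cref{prop:AbleToChooseEstimator} the estimator $\hat\theta_\varepsilon$ exists as a measurable selection from $\argmax_{\theta\in\Theta}\Phi_\varepsilon(\theta)$, so $\Phi_\varepsilon(\hat\theta_\varepsilon)\ge\Phi_\varepsilon(\theta_0)$ by construction. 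Feeding Steps 1 and 2 into \Cref{prop:M-estimator} yields $\hat\theta_\varepsilon\asconv\theta_0$ in $\R^p$.

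\textbf{Step 4 (transfer to $\mu$ and $h$).} Continuity of $\theta\mapsto\mu_\theta$ (\Cref{assump:cont}) and the continuous mapping theorem give $\hat\mu_\varepsilon=\mu_{\hat\theta_\varepsilon}\asconv\mu_{\theta_0}=\mu_0$ in $\MM$. Since $\K:\MM\to\CC$ is bounded (\Cref{lem:sym_schwartz}), $\hat h_\varepsilon=\K\hat\mu_\varepsilon\asconv\K\mu_0=h_0$ in $\CC$; and from $\norm{\K(\hat\mu_\varepsilon-\mu_0)}_{\HP}^2=\innerp{\hat\mu_\varepsilon-\mu_0,\K(\hat\mu_\varepsilon-\mu_0)}\le\OPN{\K}\MN{\hat\mu_\varepsilon-\mu_0}^2$ (using $\K=II^*$ so that $\norm{\K\nu}_{\HP}^2=\norm{I^*\nu}_{\XP}^2=\innerp{\nu,\K\nu}$) we also get convergence in $\HP$. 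The only point requiring care — and the main ``obstacle'' — is the same bookkeeping issue as in the functional case: $\argmax_{\theta}\Phi_\varepsilon$ need not be a singleton for fixed $\varepsilon$, so one must invoke \Cref{prop:AbleToChooseEstimator} to pick a measurable version of $\hat\theta_\varepsilon$ and check that the mode of convergence delivered by \Cref{prop:M-estimator} is almost sure, which is legitimate precisely because the uniform convergence in Step 1 is almost sure rather than merely in probability.
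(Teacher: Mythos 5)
Your proposal is correct and follows essentially the same route as the paper: the paper's proof of \Cref{thm:parametric consistency} simply invokes ``the same argument as in the proof of \Cref{thm:functional_convergence}'' together with \Cref{prop:M-estimator} and \Cref{prop:ContinuousContrast_UniqueMaximum}, then the continuous mapping theorem, which is exactly the uniform-convergence/well-separated-maximum/M-estimator/continuous-mapping chain you spell out. Your additional details (the bound $\varepsilon\SN{Z}\sup_\theta\MN{\mu_\theta}$ and the $\HP$-norm identity via $\K=II^*$) are accurate elaborations of steps the paper leaves implicit.
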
   
    
      \begin{proof}
        By the same argument as in the proof of \Cref{thm:functional_convergence}, 
        it follows from Propositions \ref{prop:M-estimator} and \ref{prop:ContinuousContrast_UniqueMaximum} that
        $\hat{\theta}_{\varepsilon} \asconv \theta_0$ as $\varepsilon \to 0$.
        Then, the continuous mapping theorem with \Cref{assump:cont} yields the consequence.
      \end{proof}

    \subsubsection{Asymptotic Normality}\label{subsec:conti_AN}

      We will show the asymptotic normality of $\hat{\theta}_\varepsilon$ under some additional regularity conditions 
      \Cref{assump:convex_Theta,assump:MuIsC^2,assump:Sigma_regular}.
      
      \begin{remark}\label{note:SigmaIsGramMatrix}
        Since $\K = I I^*$, the $(i,j)$ entry of $\Sigma$ defined in \Cref{assump:Sigma_regular} can be expressed as 
        \begin{equation*}
          \Sigma_{ij} = \innerp{ \partial_i \mu_{\theta_0}, I I^* (\partial_j\mu_{\theta_0}) } 
          = \qty( I^*(\partial_j\mu_{\theta_0}), I^*(\partial_j\mu_{\theta_0}) )_{\XP},
        \end{equation*}
        which is commonly refered to as a Gram matrix. If and only if $(I^*(\partial_i\mu_{\theta_0}))_{1\le i\le p}$ is linearly independent in $\XP$, then \Cref{assump:Sigma_regular} is satisfied (see \Cref{ex:density} for some concrete examples). 
      \end{remark}
      
      \begin{theorem}[Asymptotic normality]
        \label{thm:parametric asymptotic normality} 
        Under \Cref{assump:Theta,assump:cont,assump:isolated,assump:convex_Theta,assump:MuIsC^2,assump:Sigma_regular},
        it holds that
        \begin{equation*}
          \varepsilon^{-1} ( \hat{\theta}_\varepsilon - \theta_0 ) 
          \pconv \Sigma^{-1} \innerp*{ \grad_\theta \mu_{\theta_0}, Z }
          \sim N(0,\Sigma^{-1})
          \quad \text{as} \ 
          \varepsilon\to 0.
        \end{equation*}
      \end{theorem}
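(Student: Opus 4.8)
The plan is to run the standard argument for asymptotic normality of an M-estimator: localize the maximizer to the interior, use the first-order condition, expand the score around $\theta_0$, and read off the limit. By \Cref{thm:parametric consistency} we have $\hat{\theta}_\varepsilon\asconv\theta_0$, and since $\theta_0\in\mathring{\Theta}$ by \Cref{assump:convex_Theta} while $\Phi_\varepsilon$ is $C^2$ in $\theta$ by \Cref{assump:MuIsC^2}, on an event of probability one $\hat{\theta}_\varepsilon$ is an interior maximizer for all small $\varepsilon$, so that $\grad_\theta\Phi_\varepsilon(\hat{\theta}_\varepsilon)=0$. Differentiating under the pairing and using the symmetry of $\K$ gives $\partial_i\Phi_\varepsilon(\theta)=\innerp{\partial_i\mu_\theta,X^\varepsilon-\K\mu_\theta}$ and $\partial_{ij}\Phi_\varepsilon(\theta)=\innerp{\partial_{ij}\mu_\theta,X^\varepsilon-\K\mu_\theta}-\innerp{\partial_i\mu_\theta,\K\partial_j\mu_\theta}$. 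Since $X^\varepsilon-\K\mu_{\theta_0}=\varepsilon Z$, the score at the truth is $\grad_\theta\Phi_\varepsilon(\theta_0)=\varepsilon\innerp{\grad_\theta\mu_{\theta_0},Z}$, and $\innerp{\grad_\theta\mu_{\theta_0},Z}$ is a centered Gaussian vector with covariance $\Sigma$ by \Cref{note:SigmaIsGramMatrix}.

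Next I would apply the integral mean-value form of Taylor's theorem: $0=\grad_\theta\Phi_\varepsilon(\hat{\theta}_\varepsilon)=\grad_\theta\Phi_\varepsilon(\theta_0)+G_\varepsilon(\hat{\theta}_\varepsilon-\theta_0)$ with $G_\varepsilon\coloneqq\int_0^1\grad_\theta^2\Phi_\varepsilon\big(\theta_0+s(\hat{\theta}_\varepsilon-\theta_0)\big)\,\dd{s}$, which is legitimate because $\Theta$ is convex, so the segment stays in $\Theta$. Dividing by $\varepsilon$ yields $\varepsilon^{-1}(\hat{\theta}_\varepsilon-\theta_0)=-G_\varepsilon^{-1}\innerp{\grad_\theta\mu_{\theta_0},Z}$, so the whole matter reduces to showing $G_\varepsilon\asconv-\Sigma$. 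Writing $X^\varepsilon-\K\mu_\theta=\K(\mu_{\theta_0}-\mu_\theta)+\varepsilon Z$ in the Hessian formula, the term $\varepsilon\innerp{\partial_{ij}\mu_\theta,Z}$ is uniformly $O(\varepsilon)$ since $\sup_\theta\MN{\partial_{ij}\mu_\theta}<\infty$ by \Cref{assump:MuIsC^2} and $\SN{Z}<\infty$ a.s.; the term $\innerp{\partial_{ij}\mu_\theta,\K(\mu_{\theta_0}-\mu_\theta)}$ tends to $0$ as the argument approaches $\theta_0$ because $\mu_\bullet$ and its second Gateaux derivatives are uniformly continuous into $\MM$ and $\K$ is bounded; and $\innerp{\partial_i\mu_\theta,\K\partial_j\mu_\theta}\to\Sigma_{ij}$ for the same reason. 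Because $\hat{\theta}_\varepsilon\asconv\theta_0$ the whole segment collapses to $\theta_0$, and uniform continuity of the second derivatives makes these limits uniform in $s\in[0,1]$; hence $G_\varepsilon\asconv-\Sigma$.

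Finally, $\Sigma$ is regular by \Cref{assump:Sigma_regular}, so $G_\varepsilon$ is invertible for small $\varepsilon$ and $G_\varepsilon^{-1}\asconv-\Sigma^{-1}$; therefore $\varepsilon^{-1}(\hat{\theta}_\varepsilon-\theta_0)\asconv\Sigma^{-1}\innerp{\grad_\theta\mu_{\theta_0},Z}$, which in particular gives the stated convergence in probability, and the Gaussian limit has covariance $\Sigma^{-1}\Sigma\Sigma^{-1}=\Sigma^{-1}$. The main obstacle is exactly the uniform control of $G_\varepsilon$ along the random segment joining $\theta_0$ and $\hat{\theta}_\varepsilon$ — this is where \Cref{assump:MuIsC^2} (uniform continuity of the second derivatives and boundedness of $\sup_\theta\MN{\partial_{ij}\mu_\theta}$) and \Cref{assump:Sigma_regular} are genuinely used; the remaining ingredients are the differentiation-under-the-pairing identities, whose validity is guaranteed by the Fréchet differentiability built into \Cref{assump:MuIsC^2}, together with the elementary computation of the covariance of $\innerp{\grad_\theta\mu_{\theta_0},Z}$.
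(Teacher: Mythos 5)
Your proposal is correct and follows essentially the same route as the paper: first-order condition at an eventually-interior maximizer, integral mean-value expansion of the score, uniform convergence of the Hessian to $-\Sigma$ via \Cref{assump:MuIsC^2}, and inversion using \Cref{assump:Sigma_regular}. The only (harmless) difference is bookkeeping: the paper localizes to the event $A_\varepsilon$ with indicator functions and shows $\Prob(A_\varepsilon)\to1$ to conclude convergence in probability, whereas you exploit that consistency is almost sure so that interiority and invertibility hold eventually almost surely, yielding the slightly stronger almost-sure conclusion.
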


      Before going to the proof of \Cref{thm:parametric asymptotic normality},
      we refer to some notations and lemmas.
      
      \begin{notation}
        Under \Cref{lem:measure_partial_change} and \Cref{assump:MuIsC^2}, $\Phi_\varepsilon:\Theta\to \R$ is twice differentiable, and thus we put
        \begin{equation*}
          J_\varepsilon 
          \coloneqq \int^1_0 \nabla^2_\theta \Phi_\varepsilon \qty( (1-u) \theta_0 + u\hat{\theta}_\varepsilon ) \dd{u}
          \quad \text{and} \quad
          A_\varepsilon \coloneqq A'_\varepsilon \cap A''_\varepsilon,
        \end{equation*}
        where $\nabla^2_\theta \coloneqq \grad_\theta \grad_\theta^\transp$ is the matrix with $(i,j)$ entry $\partial_{ij}$,
        and 
        \begin{equation*}
            A'_\varepsilon \coloneqq \Set{ \omega\in \Omega | \hat{\theta}_\varepsilon \in \mathring{\Theta} }
          \quad \text{and} \quad
          A''_\varepsilon \coloneqq \Set{ \omega\in \Omega | \abs{\det J_\varepsilon} \neq 0}.
        \end{equation*}
      \end{notation}
      
      \begin{lemma}\label{lem:twice_nabla}
        Under \Cref{assump:MuIsC^2}, $\nabla^2_\theta \Phi(\theta_0)=-\Sigma$.
      \end{lemma}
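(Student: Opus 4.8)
The plan is to differentiate the closed form of $\Phi$ twice and then evaluate at $\theta_0$, where the ``cross'' term vanishes. Recall from the definition of $\Phi$ that
\[
  \Phi(\theta) = -\tfrac12 \innerp{ \mu_\theta - \mu_{\theta_0},\, \K(\mu_\theta - \mu_{\theta_0}) } + \tfrac12 \innerp{ \mu_{\theta_0}, \K\mu_{\theta_0} },
\]
so, writing $\nu_\theta \coloneqq \mu_\theta - \mu_{\theta_0}$, it suffices to compute the Hessian of $\theta \mapsto \innerp{\nu_\theta, \K\nu_\theta}$ at $\theta_0$, noting that $\nu_{\theta_0} = 0$ while $\grad_\theta \nu_{\theta_0} = \grad_\theta \mu_{\theta_0}$.

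First I would record the regularity needed: under \Cref{assump:MuIsC^2} the map $\theta \mapsto \mu_\theta$ is twice Fr\'echet differentiable into $\MM$, $\K \in B(\MM,\CC)$ is bounded linear, and the duality pairing $\MM \times \CC \to \R$ is a continuous bilinear form; hence $\theta \mapsto \innerp{\mu_\theta, \K\mu_\theta}$ is twice differentiable and the chain/product rules apply (this is precisely the content of \Cref{lem:measure_partial_change}, already invoked in the Notation preceding the statement). Using the symmetry of $\K$ (\Cref{lem:sym_schwartz}) to merge the two terms produced by the product rule gives
\[
  \partial_i \Phi(\theta) = -\innerp{ \partial_i \mu_\theta,\, \K\nu_\theta },
\]
and differentiating once more,
\[
  \partial_{ij} \Phi(\theta) = -\innerp{ \partial_{ij}\mu_\theta,\, \K\nu_\theta } - \innerp{ \partial_i \mu_\theta,\, \K \partial_j \mu_\theta }.
\]

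Finally I would set $\theta = \theta_0$: since $\nu_{\theta_0} = 0$, the first term drops out, leaving $\partial_{ij}\Phi(\theta_0) = -\innerp{ \partial_i \mu_{\theta_0}, \K \partial_j \mu_{\theta_0} } = -\Sigma_{ij}$ by the definition of $\Sigma$ in \Cref{assump:Sigma_regular}; collecting entries yields $\nabla^2_\theta \Phi(\theta_0) = -\Sigma$. The only point requiring care is the justification of term-by-term differentiation of the bilinear pairing, i.e. that $\theta \mapsto \innerp{\mu_\theta, \K\mu_\theta}$ is $C^2$ with the product-rule derivative; but this is routine because $\K$ is bounded linear and the pairing is continuous bilinear, so the composition inherits $C^2$ regularity, and \Cref{lem:measure_partial_change} supplies exactly the needed interchange, so no additional argument is necessary.
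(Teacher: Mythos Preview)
Your proof is correct and follows essentially the same approach as the paper, which simply records the result as an immediate consequence of \Cref{lem:measure_partial_change}. The only cosmetic difference is that you differentiate the completed-square form $-\tfrac12\innerp{\nu_\theta,\K\nu_\theta}+\text{const}$ rather than the original expression $\innerp{\mu_\theta,h_0}-\tfrac12\innerp{\mu_\theta,\K\mu_\theta}$, but the computation and the tool invoked are the same.
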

      
      \begin{proof}
        This is an immediate consequence from 
        \Cref{lem:measure_partial_change}.
      \end{proof}
      
      \begin{lemma}\label{lem:twice_nabla_uni_conv}
        Under \Cref{assump:MuIsC^2,assump:Theta}, 
        \begin{equation*}
          \sup_{\theta\in\Theta} \norm{ \nabla^2_\theta \qty{ \Phi_\varepsilon(\theta) - \Phi(\theta) } }_F \asconv 0
          \quad \text{as} \ 
          \varepsilon \to 0,
        \end{equation*}
        where $\norm{\cdot}_F$ is the Frobenius norm for $p\times p$ matrices.
      \end{lemma}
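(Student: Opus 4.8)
The plan is to reduce the Hessian of the difference $\Phi_\varepsilon-\Phi$ to a single duality pairing with the Gaussian element $Z$ and then to bound it crudely and pathwise. First I would observe that, since $X^\varepsilon=h_0+\varepsilon Z$, the two contrast functions differ only in their linear term, so that $\Phi_\varepsilon(\theta)-\Phi(\theta)=\innerp{\mu_\theta,X^\varepsilon}-\innerp{\mu_\theta,h_0}=\varepsilon\innerp{\mu_\theta,Z}$. Applying $\nabla^2_\theta$ and moving the derivative inside the pairing — legitimate because $Z$ is $P_0$-a.s.\ an element of $\CC=\MM^*$ and $\theta\mapsto\mu_\theta$ lies in $\CkThM[2]$ by \Cref{assump:MuIsC^2}, via \Cref{lem:measure_partial_change} — yields that $\nabla^2_\theta\{\Phi_\varepsilon(\theta)-\Phi(\theta)\}$ equals $\varepsilon$ times the $p\times p$ matrix whose $(i,j)$ entry is $\innerp{\partial_{ij}\mu_\theta,Z}$. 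The claim thus reduces to showing that $\sup_{\theta\in\Theta}\norm{(\innerp{\partial_{ij}\mu_\theta,Z})_{1\le i,j\le p}}_F$ is finite almost surely, since multiplying by $\varepsilon$ then forces a.s.\ convergence to $0$.

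Next I would fix $\omega$ in the full-measure event $\{Z\in\CC\}$ and estimate each entry by $\abs{\innerp{\partial_{ij}\mu_\theta,Z}}\le\SN{Z}\,\MN{\partial_{ij}\mu_\theta}$. By \Cref{assump:Theta,assump:MuIsC^2}, $\sup_{\theta\in\Theta}\MN{\partial_{ij}\mu_\theta}\le\CkThMN[2]{\mu_\bullet}<\infty$, and $\SN{Z}<\infty$ because $Z$ is a.s.\ continuous on the compact interval $T$. Hence $\sup_{\theta\in\Theta}\norm{\nabla^2_\theta\{\Phi_\varepsilon(\theta)-\Phi(\theta)\}}_F\le\varepsilon\,p\,\SN{Z}\,\CkThMN[2]{\mu_\bullet}$, which tends to $0$ almost surely as $\varepsilon\to0$.

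The computation is routine and I do not anticipate a genuine obstacle. The one place deserving a touch of care is the interchange of $\nabla^2_\theta$ with the pairing $\innerp{\cdot,Z}$; I would handle it exactly as in the proof of \Cref{lem:twice_nabla}, by citing \Cref{lem:measure_partial_change} and noting that the composition of the $\CkThM[2]$-valued map $\theta\mapsto\mu_\theta$ with the bounded linear functional $Z\in\MM^*$ is again twice continuously differentiable with derivatives obtained by applying $Z$ to the Gateaux derivatives $\partial_{ij}\mu_\theta$. It is also worth making explicit that the bound is taken pathwise on $\{Z\in\CC\}$, so that $\SN{Z}$ — and with it the whole supremum over $\theta$ — is genuinely a finite random variable rather than merely a.s.\ finite after the supremum.
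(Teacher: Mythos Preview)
Your proposal is correct and follows essentially the same route as the paper: both compute $\Phi_\varepsilon(\theta)-\Phi(\theta)=\varepsilon\innerp{\mu_\theta,Z}$, invoke \Cref{lem:measure_partial_change} to pass $\nabla^2_\theta$ inside the pairing, and then bound the entries by $\SN{Z}$ times the $\CkThM[2]$-norm of $\mu_\bullet$. The only cosmetic difference is the harmless extra factor $p$ in your Frobenius-norm estimate.
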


      \begin{proof}
        By \Cref{assump:Theta,assump:MuIsC^2},
        $\displaystyle\max_{i,j=1,\dots,p} \sup_{\theta\in\Theta}|\partial_i \partial_j \mu_\theta|$ is bounded. 
        It follows by  \Cref{lem:measure_partial_change} that
        \begin{equation*}
          \sup_{\theta\in\Theta} \norm{ \nabla^2_\theta \qty{ \Phi_\varepsilon(\theta) - \Phi(\theta) } }_F
          =\sup_{\theta\in\Theta} \norm{ \nabla^2_\theta \innerp{ \mu_\theta , \varepsilon Z } }_F 
          \leq \varepsilon \CkThMN[2]{\mu_\bullet} \SN{Z}
        \end{equation*}
        vanishes almost surely as $\varepsilon\to 0$.
      \end{proof}

      \begin{lemma}\label{lem:J_epsilon_convergence}
        Under \Cref{assump:Theta,assump:cont,assump:convex_Theta,assump:MuIsC^2,assump:isolated}, $J_\varepsilon \pconv -\Sigma$ as $\varepsilon \to 0$.
      \end{lemma}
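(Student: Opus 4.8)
The plan is to reduce the statement to three facts that are already available: the strong consistency $\hat\theta_\varepsilon\asconv\theta_0$ from \Cref{thm:parametric consistency}, the uniform convergence of the Hessians in \Cref{lem:twice_nabla_uni_conv}, and the continuity of $\theta\mapsto\nabla^2_\theta\Phi(\theta)$ on the compact set $\Theta$ together with the identification $\nabla^2_\theta\Phi(\theta_0)=-\Sigma$ from \Cref{lem:twice_nabla}. Since almost sure convergence implies convergence in probability, it suffices to prove $J_\varepsilon\asconv-\Sigma$. Throughout, write $\theta_\varepsilon^u\coloneqq(1-u)\theta_0+u\hat\theta_\varepsilon$, so that $J_\varepsilon=\int_0^1\nabla^2_\theta\Phi_\varepsilon(\theta_\varepsilon^u)\,\dd{u}$.

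First I would split, by the triangle inequality for the Frobenius norm,
\[
\norm{J_\varepsilon+\Sigma}_F
\le
\int_0^1 \norm{ \nabla^2_\theta\Phi_\varepsilon(\theta_\varepsilon^u)-\nabla^2_\theta\Phi(\theta_\varepsilon^u) }_F\,\dd{u}
+ \norm{ \int_0^1 \nabla^2_\theta\Phi(\theta_\varepsilon^u)\,\dd{u} + \Sigma }_F .
\]
For the first term, note that $\theta_\varepsilon^u\in\Theta$ for every $u\in[0,1]$, because $\theta_0\in\mathring\Theta\subset\Theta$ and $\hat\theta_\varepsilon\in\Theta$ while $\Theta$ is convex (\Cref{assump:convex_Theta}); hence the integrand is dominated pointwise by $\sup_{\theta\in\Theta}\norm{\nabla^2_\theta(\Phi_\varepsilon-\Phi)(\theta)}_F$, which tends to $0$ almost surely by \Cref{lem:twice_nabla_uni_conv}. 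Thus the first term vanishes a.s.

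For the second term I would invoke continuity. By \Cref{assump:MuIsC^2} and \Cref{lem:measure_partial_change}, $\Phi$ is twice continuously differentiable and $\theta\mapsto\nabla^2_\theta\Phi(\theta)$ is continuous on the compact set $\Theta$, hence uniformly continuous there. Since $\sup_{u\in[0,1]}\abs{\theta_\varepsilon^u-\theta_0}=\abs{\hat\theta_\varepsilon-\theta_0}\asconv0$ by \Cref{thm:parametric consistency}, uniform continuity yields $\sup_{u\in[0,1]}\norm{\nabla^2_\theta\Phi(\theta_\varepsilon^u)-\nabla^2_\theta\Phi(\theta_0)}_F\asconv0$, and integrating in $u$ together with \Cref{lem:twice_nabla} gives
\[
\int_0^1\nabla^2_\theta\Phi(\theta_\varepsilon^u)\,\dd{u}\asconv\nabla^2_\theta\Phi(\theta_0)=-\Sigma .
\]
Combining the two bounds gives $J_\varepsilon\asconv-\Sigma$, hence in particular $J_\varepsilon\pconv-\Sigma$.

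The only genuinely delicate point is the uniform-in-$u$ control of the random argument $\theta_\varepsilon^u$ appearing in the second term; this is precisely why one passes through uniform continuity of $\nabla^2_\theta\Phi$ on the compact $\Theta$ rather than mere pointwise continuity, and measurability of all the quantities involved is automatic from these continuity properties. Everything else is a routine triangle-inequality estimate.
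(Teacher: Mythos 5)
Your proof is correct and follows essentially the same route as the paper: split $\norm{J_\varepsilon+\Sigma}_F$ into the uniform-convergence piece controlled by \Cref{lem:twice_nabla_uni_conv} and a continuity piece controlled by consistency of $\hat\theta_\varepsilon$ together with $\nabla^2_\theta\Phi(\theta_0)=-\Sigma$. The only difference is cosmetic: the paper runs an $r/3$-type probability estimate with an indicator split on $\{\abs{\theta^u_\varepsilon-\theta_0}\ge\delta\}$ and concludes convergence in probability, whereas you use uniform continuity of $\nabla^2_\theta\Phi$ on the compact $\Theta$ to get the (slightly stronger) almost sure convergence directly.
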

      
      \begin{proof}
        Let $\theta^u_\varepsilon \coloneqq \theta_0+u(\hat{\theta}_\varepsilon-\theta_0), u\in[0,1]$  
        and take $r>0$ arbitrarily.
        Since $\Phi(\theta)$ is twice continuously differentiable, there exists $\delta>0$ such that 
        \begin{equation*}
          \sup_{|\theta-\theta_0|<\delta} \norm{ \nabla^2_\theta \Phi(\theta)-\nabla^2_\theta \Phi(\theta_0) }_F 
          < \frac{r}{3}.
        \end{equation*}
        Then, it holds that
        \begin{equation*}
            \begin{aligned}
              \norm{ J_\varepsilon-(-\Sigma) }_F
              &= \norm{ \int^1_0 \nabla^2_\theta \Phi_\varepsilon (\theta^u_\varepsilon) \dd{u} - \nabla^2_\theta \Phi(\theta_0) }_F \\
              &\le \sup_{\theta\in\Theta} \norm{ \nabla^2_\theta \qty{ \Phi_\varepsilon(\theta) - \Phi(\theta) } }_F
                + \int^1_0 \norm{ \nabla^2_\theta \qty{ \Phi(\theta^u_\varepsilon) - \Phi(\theta_0) } }_F \dd{u}
                \, \idv_{\{ |\theta^u_\varepsilon-\theta_0| \ge \delta \}}\\
              &\quad 
                +\int^1_0 \norm{ \nabla^2_\theta \Phi(\theta^u_\varepsilon) 
                -\nabla^2_\theta \Phi(\theta_0) }_F \dd{u}
                \, \idv_{\{|\theta^u_\varepsilon-\theta_0|<\delta\}}.
            \end{aligned}
        \end{equation*}
        From these inequalities, we see that
        \begin{align*}
          \Prob \qty( \norm{ J_\varepsilon - (-\Sigma) }_F > r )
          &\le 
            \Prob \qty(\sup_{\theta\in\Theta} \norm{ \nabla^2_\theta \qty{ \Phi_\varepsilon(\theta) - \Phi(\theta) } }_F > \frac{r}{3} ) \\
          &\quad+
            \Prob \qty( \int^1_0 \norm{ \nabla^2_\theta \qty{ \Phi(\theta^u_\varepsilon) - \Phi(\theta_0) } }_F \dd{u}
            \, \idv_{\{|\theta^u_\varepsilon-\theta_0|\ge\delta\}} > \frac{r}{3} ) \\
          & \quad
            + \Prob \qty( \int^1_0 \norm{ \nabla^2_\theta \Phi(\theta^u_\varepsilon) 
            - \nabla^2_\theta \Phi(\theta_0) }_F \dd{u}
            \, \idv_{\{|\theta^u_\varepsilon-\theta_0|<\delta\}} > \frac{r}{3} )\\
          &\le 
            \Prob \qty( \sup_{\theta\in\Theta} \norm{ \nabla^2_\theta \qty{ \Phi_\varepsilon(\theta) - \Phi(\theta) } }_F > \frac{r}{3} )
            + \Prob(|\theta^u_\varepsilon-\theta_0|\ge \delta).
        \end{align*}
        Thus, by \Cref{thm:parametric consistency} and \Cref{lem:twice_nabla_uni_conv}, we obtain the conclusion. 
      \end{proof}
      
      \begin{lemma}\label{lem:A_epsilon_convergence}
        Under \Cref{assump:Theta,assump:cont,assump:convex_Theta,assump:MuIsC^2,assump:Sigma_regular,assump:isolated}, 
        $P(A_\varepsilon) \to 1$ as $\varepsilon\to0$.
      \end{lemma}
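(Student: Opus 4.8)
The plan is to bound $P(A_\varepsilon^{\complement})\le P((A'_\varepsilon)^{\complement})+P((A''_\varepsilon)^{\complement})$ via De Morgan's law and the union bound, and then show that each of the two terms on the right tends to $0$. This splits the lemma into two essentially independent convergence statements, both of which have already been prepared upstream.

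First I would treat $A'_\varepsilon$. By \Cref{assump:convex_Theta} we have $\theta_0\in\mathring\Theta$, so there is $\rho>0$ with the open ball $B(\theta_0,\rho)\subset\mathring\Theta$; in particular $\{\abs{\hat\theta_\varepsilon-\theta_0}<\rho\}\subset A'_\varepsilon$. By \Cref{thm:parametric consistency}, $\hat\theta_\varepsilon\asconv\theta_0$, hence also $\hat\theta_\varepsilon\pconv\theta_0$, so $P(\abs{\hat\theta_\varepsilon-\theta_0}<\rho)\to1$ and therefore $P((A'_\varepsilon)^{\complement})\to0$.

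Second, for $A''_\varepsilon$ I would invoke \Cref{lem:J_epsilon_convergence}, which gives $J_\varepsilon\pconv-\Sigma$ in $\Matp$. Since $\det\colon\Matp\to\R$ is continuous, the continuous mapping theorem yields $\det J_\varepsilon\pconv\det(-\Sigma)=(-1)^p\det\Sigma$, and by \Cref{assump:Sigma_regular} the constant $c\coloneqq\abs{\det\Sigma}$ is strictly positive. Consequently
\[
  P\bigl(\abs{\det J_\varepsilon}>\tfrac{c}{2}\bigr)\ \ge\ P\bigl(\abs{\det J_\varepsilon-(-1)^p\det\Sigma}<\tfrac{c}{2}\bigr)\ \longrightarrow\ 1,
\]
and since $\{\abs{\det J_\varepsilon}>c/2\}\subset A''_\varepsilon$ this gives $P((A''_\varepsilon)^{\complement})\to0$. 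Combining the two parts, $P(A_\varepsilon)=1-P(A_\varepsilon^{\complement})\to1$.

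The argument is short because the real work was done earlier: the consistency statement \Cref{thm:parametric consistency} and the matrix convergence \Cref{lem:J_epsilon_convergence} do the heavy lifting. The only point needing a moment's care is that $J_\varepsilon$ is a random matrix obtained by integrating $\nabla^2_\theta\Phi_\varepsilon$ along the random segment joining $\theta_0$ and $\hat\theta_\varepsilon$, so one cannot simply substitute $\theta_0$; but that randomness is already absorbed in the proof of \Cref{lem:J_epsilon_convergence}, so here it suffices to quote that lemma verbatim. Hence I do not anticipate any genuine obstacle.
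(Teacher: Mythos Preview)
Your proof is correct and follows essentially the same route as the paper: split via the union bound, handle $A'_\varepsilon$ using consistency together with $\theta_0\in\mathring\Theta$, and handle $A''_\varepsilon$ via \Cref{lem:J_epsilon_convergence} and continuity of the determinant. The only cosmetic difference is the choice of threshold in the $A''_\varepsilon$ step ($c/2$ versus $c$), which is immaterial.
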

      
      \begin{proof}
        Take sufficiently small $\delta$ so that the ball of radius $\delta$ centered at $\theta_0$ is contained in $\Theta$.
        Then, it follows by \Cref{thm:parametric consistency} that
        \begin{equation*}
          \Prob \qty\big( (A_\varepsilon')^\complement ) 
          \leq \Prob \qty( \abs\big{ \hat{\theta}_\varepsilon -\theta_0 } > \delta ) \to 0,\quad \varepsilon\to 0.
        \end{equation*}
        Since $\det J_\varepsilon \pconv \det(-\Sigma)$,
        \begin{equation*}
          \Prob \qty\big( (A_\varepsilon'')^\complement )
          \leq \Prob \qty\bigg( \abs\Big{ \det J_\varepsilon - \det(-\Sigma) } \geq \abs{ \det \Sigma } ) \to 0,\quad \varepsilon\to 0.
        \end{equation*}
        Thus, 
        \begin{equation*}
          \Prob(A_\varepsilon^\complement)
          \leq \Prob \qty\big( (A_\varepsilon')^\complement )
          + \Prob \qty\big( (A_\varepsilon'')^\complement )
          \to 0,
          \ \  \text{i.e.}, \ \ 
          \Prob(A_\varepsilon) \to 1,
        \end{equation*}
        as $\varepsilon\to0$.
      \end{proof}
      
      \begin{lemma}\label{lem:inverse_convergence}
        Under \Cref{assump:Theta,assump:cont,assump:convex_Theta,assump:MuIsC^2,assump:Sigma_regular,assump:isolated}, 
        $J^{-1}_\varepsilon \idv_{A_\varepsilon} \pconv -\Sigma^{-1}$ as $\varepsilon\to0$.
      \end{lemma}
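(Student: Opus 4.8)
The plan is to deduce \Cref{lem:inverse_convergence} from the two preceding lemmas, \Cref{lem:J_epsilon_convergence} and \Cref{lem:A_epsilon_convergence}, by exploiting continuity of matrix inversion on the set of regular matrices. First I would observe that on the event $A_\varepsilon$ (specifically on $A''_\varepsilon$) the matrix $J_\varepsilon$ is invertible, so $J_\varepsilon^{-1}\idv_{A_\varepsilon}$ is a well-defined random matrix. By \Cref{assump:Sigma_regular}, $\Sigma$ is regular, hence so is $-\Sigma$, and the inversion map $\mathrm{inv}\colon \mathrm{GL}(p,\R)\to\mathrm{GL}(p,\R)$ is continuous at $-\Sigma$.

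The main argument is a standard $\varepsilon$--$\delta$ estimate combined with the continuous mapping theorem. Fix $r>0$; by continuity of $\mathrm{inv}$ at $-\Sigma$ there exists $\rho>0$ such that $\norm{J-(-\Sigma)}_F<\rho$ implies $J\in\mathrm{GL}(p,\R)$ and $\norm{J^{-1}-(-\Sigma)^{-1}}_F<r$. Then I would bound
\begin{align*}
  \Prob\qty( \norm{ J_\varepsilon^{-1}\idv_{A_\varepsilon} - (-\Sigma^{-1}) }_F > r )
  &\le \Prob\qty( \norm{ J_\varepsilon - (-\Sigma) }_F \ge \rho )
     + \Prob\qty( A_\varepsilon^\complement ).
\end{align*}
Indeed, on the intersection of $\{\norm{J_\varepsilon-(-\Sigma)}_F<\rho\}$ with $A_\varepsilon$ we have both that $J_\varepsilon$ is regular (so $\idv_{A_\varepsilon}$ does not kill the inverse, i.e.\ $J_\varepsilon^{-1}\idv_{A_\varepsilon}=J_\varepsilon^{-1}$) and that $\norm{J_\varepsilon^{-1}-(-\Sigma^{-1})}_F<r$. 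The first term on the right tends to $0$ by \Cref{lem:J_epsilon_convergence}, and the second tends to $0$ by \Cref{lem:A_epsilon_convergence}. Letting $\varepsilon\to0$ and then noting $r>0$ was arbitrary gives $J_\varepsilon^{-1}\idv_{A_\varepsilon}\pconv -\Sigma^{-1}$.

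There is essentially no serious obstacle here; the lemma is a soft consequence of the earlier convergence results. The only point requiring a little care is the bookkeeping with the indicator $\idv_{A_\varepsilon}$: one must make sure that outside $A_\varepsilon$ the expression $J_\varepsilon^{-1}\idv_{A_\varepsilon}$ is interpreted as $0$ (or at least as something whose contribution is controlled by $\Prob(A_\varepsilon^\complement)$), which is exactly why the event $A_\varepsilon^\complement$ appears additively in the bound above. An alternative, even shorter route would be to invoke Slutsky's theorem together with the continuous mapping theorem directly: $J_\varepsilon\pconv-\Sigma$ and $\idv_{A_\varepsilon}\pconv 1$, so $(J_\varepsilon,\idv_{A_\varepsilon})$ converges jointly in probability, and the map $(J,a)\mapsto a\cdot\mathrm{inv}(J)$ (defined to be $0$ when $J$ is singular) is continuous at $(-\Sigma,1)$ since $-\Sigma$ is regular. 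I would present whichever of these two is cleaner, most likely the explicit probability bound since it is self-contained and does not require care about the domain of the inversion map as a function on all of $\Matp$.
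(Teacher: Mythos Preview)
Your proposal is correct. Your primary argument is an explicit $\varepsilon$--$\delta$ unfolding of the continuous mapping theorem, while the paper takes precisely the route you sketch as your ``alternative'': it defines the extension $F\colon\Matp\to\Matp$ with $F(A)=A^{-1}$ for regular $A$ and $F(A)=0$ otherwise, observes via Cramer's rule that $F$ is continuous at every regular matrix, applies the continuous mapping theorem to $J_\varepsilon\pconv-\Sigma$ to get $F(J_\varepsilon)\pconv-\Sigma^{-1}$, and then multiplies by $\idv_{A_\varepsilon}\pconv1$ (noting that $J_\varepsilon^{-1}\idv_{A_\varepsilon}=F(J_\varepsilon)\idv_{A_\varepsilon}$). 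So your two variants bracket the paper's proof; either presentation is fine, and your handling of the indicator is exactly the point the paper also has to address.
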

      
      \begin{proof}
        Let $F:\Matp\to\Matp$ be given by
        \begin{equation*}
          F(A) \coloneqq 
          \begin{cases}
            A^{-1} & \text{if $A$ is invertible}, \\
            0      & \text{otherwise}.
          \end{cases}
        \end{equation*}
        Since $\det$ is a continuous map from $\Matp$ to $\R$, Cramer's rule shows that $F$ is continuous at every regular matrix.
        By \Cref{lem:J_epsilon_convergence} and the continuous mapping theorem with $\det(-\Sigma) \neq 0$, $F(J_\varepsilon) \pconv -\Sigma^{-1}$. 
        Thus, it follows from \Cref{lem:A_epsilon_convergence} that
        \begin{equation*}
          J_\varepsilon^{-1} \idv_{A_\varepsilon}
          = \Phi(J_\varepsilon) \idv_{A_\varepsilon}
          \pconv - \Sigma^{-1},
        \end{equation*}
        as $\varepsilon\to0$.
      \end{proof}

      \begin{proof}[Proof of \Cref{thm:parametric asymptotic normality}]
        By the mean value theorem, 
        \begin{equation*}
          \grad_\theta \Phi_\varepsilon (\hat{\theta}_\varepsilon) - \grad_\theta \Phi_\varepsilon (\theta_0)
          =\int^1_0 \nabla^2_\theta \Phi_\varepsilon \qty( (1-u) \theta_0 + u \hat{\theta}_\varepsilon ) \dd{u} 
          \, ( \hat{\theta}_\varepsilon-\theta_0 ).
        \end{equation*}
        By multiplying this by $\idv_{A_\varepsilon}$,
        \begin{equation*}
          ( \grad_\theta \Phi_\varepsilon ( \hat{\theta}_\varepsilon ) - \grad_\theta \Phi_\varepsilon ( \theta_0 ) ) \idv_{A_\varepsilon}
          = J_\varepsilon \idv_{A_\varepsilon} ( \hat{\theta}_\varepsilon-\theta_0 ).
        \end{equation*} 
        When $A_\varepsilon$ occurs, $J_\varepsilon$ is regular and $\hat{\theta}_\varepsilon$ is an interior maximum point of $\Phi_\varepsilon$, so that
        \begin{equation*}
          ( \hat{\theta}_\varepsilon - \theta_0 ) \idv_{A_\varepsilon}
          = -J_\varepsilon^{-1}
          \grad_\theta \Phi_\varepsilon ( \theta_0 ) \idv_{A_\varepsilon}.
        \end{equation*}
        According to \Cref{lem:A_epsilon_convergence}, it holds that 
        \begin{align*}
          \varepsilon^{-1} ( \hat{\theta}_\varepsilon - \theta_0 )
          &= -\varepsilon^{-1}J_\varepsilon^{-1} \grad_\theta \Phi_\varepsilon \qty( \theta_0 ) \idv_{A_\varepsilon}
          + \varepsilon^{-1} (\hat{\theta}_\varepsilon - \theta_0 ) \idv_{A_\varepsilon^\complement} \\
          &= -\varepsilon^{-1} J_\varepsilon^{-1} \grad_\theta \Phi_\varepsilon \qty(\theta_0) \idv_{A_\varepsilon}
          + o_p(1),
        \end{align*}
        Since $\grad_\theta \Phi(\theta_0)=0$ by \Cref{prop:ContinuousContrast_UniqueMaximum},
        it follows by
        \Cref{lem:measure_partial_change,lem:inverse_convergence} that
        \begin{equation*}
          \varepsilon^{-1} J_\varepsilon^{-1} \grad_\theta \Phi_\varepsilon \qty(\theta_0) \idv_{A_\varepsilon} 
          = J_\varepsilon^{-1} \innerp{ \grad_\theta \mu_{\theta_0}, Z } 
              \idv_{A_\varepsilon} 
          \pconv -\Sigma^{-1} \innerp{ \grad_\theta \mu_{\theta_0}, Z },
        \end{equation*}
        as $\varepsilon\to0$. Hence,
        \begin{equation*}
          \varepsilon^{-1} ( \hat{\theta}_\varepsilon - \theta_0 ) 
          \pconv \Sigma^{-1} \innerp*{ \grad_\theta \mu_{\theta_0}, Z },
          \quad 
          \varepsilon \to 0.
        \end{equation*}  
        The right-hand side follows $N(0,\Sigma^{-1})$.
        Indeed, 
        since $\grad_\theta \mu_{\theta_0} \in \MM^p$ 
        and $Z$ is a Gaussian vector on $\CC$,  
        $\innerp{ \grad_\theta \mu_{\theta_0}, Z }$
        is a centered Gaussian with covariance matrix 
        \begin{equation*}
          \qty( \E \qty[ \innerp{ \partial_i \mu_{\theta_0}, Z } \innerp{ \partial_j \mu_{\theta_0}, Z } ] )_{ij}
          = \qty( \innerp{ \partial_i \mu_{\theta_0}, \K (\partial_j \mu_{\theta_0}) } )_{ij}
          = \Sigma,
        \end{equation*}
        and so, the covariance matrix of $\Sigma^{-1} \innerp{ \grad_\theta \mu_{\theta_0}, Z }$ is $\Sigma^{-1}$.
      \end{proof}

    \subsubsection{Local Asymptotic Normality and Asymptotic Efficiency}

      In \Cref{subsec:conti_consistency,subsec:conti_AN}, we have shown that the estimator has consistency and asymptotic normality. In this section, we further show that this estimator is asymptotically efficient. We firstly show that the estimator is locally asymptotically normal at the true parameter.

      In this section, we write $P_\theta^\varepsilon$ as $P_{h_{\theta}}^\varepsilon$ for simplicity.

      \begin{theorem}[Local asymptotic normality]
        \label{thm:LAN1}

        Under \Cref{assump:Theta,assump:cont,assump:isolated,assump:convex_Theta,assump:MuIsC^2,assump:Sigma_regular},
        the family $\{ P_{\hat{\theta}_{\varepsilon}}^\varepsilon\}_{\varepsilon>0} \;$ is locally asymptotically normal at $\theta_0$.
      \end{theorem}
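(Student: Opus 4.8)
The plan is to verify the defining quadratic (Le Cam) expansion of the localized log-likelihood ratio directly from the explicit form \Cref{eq:likelihood}, with local scale $\varepsilon$. Fix $u\in\R^p$, put $\theta_{\varepsilon,u}\coloneqq\theta_0+\varepsilon u$ (which lies in $\mathring\Theta$ for all small $\varepsilon$ by \Cref{assump:convex_Theta}), and write $\nu_\varepsilon\coloneqq\mu_{\theta_{\varepsilon,u}}-\mu_{\theta_0}\in\MM$. Subtracting the two instances of \Cref{eq:likelihood} (numerator $h_{\theta_{\varepsilon,u}}=\K\mu_{\theta_{\varepsilon,u}}$, denominator $h_{\theta_0}=\K\mu_{\theta_0}$) gives
\[
  \log\dv{P^\varepsilon_{\theta_{\varepsilon,u}}}{P^\varepsilon_{\theta_0}}(X^\varepsilon)
  =\frac1{\varepsilon^2}\qty{ \innerp{\nu_\varepsilon,X^\varepsilon} - \frac12\innerp{\mu_{\theta_{\varepsilon,u}},\K\mu_{\theta_{\varepsilon,u}}} + \frac12\innerp{\mu_{\theta_0},\K\mu_{\theta_0}} }.
\]
Under $P^\varepsilon_{\theta_0}$ the observation is $X^\varepsilon=h_0+\varepsilon Z=\K\mu_{\theta_0}+\varepsilon Z$ with $Z\sim N(0,\K)$; substituting this and completing the square in the quadratic form (the symmetry of $\K$ is what makes the deterministic cross terms telescope, exactly as in \Cref{prop:uniquemaximum}), the bracket collapses to $\varepsilon\innerp{\nu_\varepsilon,Z}-\frac12\innerp{\nu_\varepsilon,\K\nu_\varepsilon}$, so
\[
  \log\dv{P^\varepsilon_{\theta_{\varepsilon,u}}}{P^\varepsilon_{\theta_0}}(X^\varepsilon)
  = \frac1\varepsilon\innerp{\nu_\varepsilon,Z} - \frac1{2\varepsilon^2}\innerp{\nu_\varepsilon,\K\nu_\varepsilon}.
\]

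The second step is a first-order Taylor expansion of $\nu_\varepsilon$ in the total-variation norm. By \Cref{assump:MuIsC^2}, $\mu_\bullet\in\CkThM[2]$, so Taylor's formula with integral remainder gives $\nu_\varepsilon=\varepsilon\sum_{i=1}^p u_i\,\partial_i\mu_{\theta_0}+R_\varepsilon$ with $\MN{R_\varepsilon}\le\frac12\varepsilon^2\abs u^2\,\CkThMN[2]{\mu_\bullet}=O(\varepsilon^2)$. Plugging this into the linear term yields $\varepsilon^{-1}\innerp{\nu_\varepsilon,Z}=u^\transp\innerp{\grad_\theta\mu_{\theta_0},Z}+\varepsilon^{-1}\innerp{R_\varepsilon,Z}$, and $\abs{\varepsilon^{-1}\innerp{R_\varepsilon,Z}}\le\varepsilon^{-1}\MN{R_\varepsilon}\SN Z=O(\varepsilon)\SN Z\to0$ $P^\varepsilon_{\theta_0}$-a.s., since $Z$ takes values in $\CC$ so $\SN Z<\infty$ a.s.; plugging it into the quadratic term and using $\OPN\K<\infty$ with $\MN{R_\varepsilon}=O(\varepsilon^2)$ gives $\varepsilon^{-2}\innerp{\nu_\varepsilon,\K\nu_\varepsilon}=u^\transp\Sigma u+O(\varepsilon)$, where $\Sigma_{ij}=\innerp{\partial_i\mu_{\theta_0},\K(\partial_j\mu_{\theta_0})}$ as in \Cref{assump:Sigma_regular}. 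Collecting the pieces, for every $u\in\R^p$,
\[
  \log\dv{P^\varepsilon_{\theta_{\varepsilon,u}}}{P^\varepsilon_{\theta_0}}(X^\varepsilon)
  = u^\transp\Delta_\varepsilon - \frac12 u^\transp\Sigma u + o_{P^\varepsilon_{\theta_0}}(1),
  \qquad
  \Delta_\varepsilon\coloneqq\varepsilon^{-1}\innerp{\grad_\theta\mu_{\theta_0},X^\varepsilon-h_0}.
\]

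It then remains to identify the limit law of the central sequence and the information matrix. Under $P^\varepsilon_{\theta_0}$ one has $\Delta_\varepsilon=\innerp{\grad_\theta\mu_{\theta_0},Z}$, which by the computation closing the proof of \Cref{thm:parametric asymptotic normality} is centered Gaussian with covariance $(\innerp{\partial_i\mu_{\theta_0},\K(\partial_j\mu_{\theta_0})})_{ij}=\Sigma$ (here exactly, and independently of $\varepsilon$), so a fortiori $\Delta_\varepsilon\dconv N(0,\Sigma)$; moreover $\Sigma$ is symmetric positive definite by \Cref{assump:Sigma_regular} and \Cref{note:SigmaIsGramMatrix}, so the quadratic is nondegenerate. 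This is precisely the LAN property at $\theta_0$ with rate $\varepsilon$, Fisher information $\Sigma$, and central sequence $\Delta_\varepsilon$; combined with \Cref{thm:parametric asymptotic normality}, whose limit $N(0,\Sigma^{-1})$ attains the H\'ajek--Le Cam bound, it also delivers the asymptotic efficiency announced in the section title.

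I do not anticipate a genuine obstacle: the argument is essentially the one-line computation above together with a routine Taylor expansion. The only points demanding care are the bookkeeping of the remainder $R_\varepsilon$ in the $\MN\cdot$ norm and the check that, after dividing by $\varepsilon$ (resp.\ $\varepsilon^2$), all error terms remain $o_{P^\varepsilon_{\theta_0}}(1)$; for this the pathwise identity $X^\varepsilon=h_0+\varepsilon Z$ under $P^\varepsilon_{\theta_0}$ (so that every error term can be controlled $\omega$-wise) and the a.s.\ finiteness of $\SN Z$ are the essential inputs, and the nondegeneracy of the limiting quadratic is exactly what \Cref{assump:Sigma_regular} guarantees.
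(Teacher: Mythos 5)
Your proposal is correct and follows essentially the same route as the paper: both reduce the localized log-likelihood ratio to $\varepsilon^{-1}\innerp{\nu_\varepsilon,Z}-\tfrac1{2\varepsilon^2}\innerp{\nu_\varepsilon,\K\nu_\varepsilon}$ via the Cameron--Martin form of the density and then pass to the limit using the differentiability of $\theta\mapsto\mu_\theta$. The only (immaterial) differences are that the paper normalizes the local parameter by $\varphi(\varepsilon)=\varepsilon\Sigma^{-1/2}$ so the limiting quadratic is $\tfrac12\abs{u}^2$ with central sequence $N(0,I_p)$, whereas you keep rate $\varepsilon$ and information $\Sigma$, and that you make the Taylor remainder quantitative in $\MN{\cdot}$ where the paper argues by continuity of the derivative.
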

        
      \begin{proof}
        Note that $\Sigma$ is positive definite, because $\Sigma$ is regular and is the covariance matrix of 
        $\innerp{ \grad_\theta \mu_\theta , Z }$, 
        Therefore, we can define $\varphi(\varepsilon) \coloneqq \varepsilon \Sigma^{-1/2}$.
        It follows from the Cameron--Martin theorem (see, e.g., Theorem 9.3 in \textcite{lifshits1995gasussian}) that
        \begin{equation*}
            \log\dv{P_\theta^\varepsilon}{P_{\theta_0}^\varepsilon} \qty(X^\varepsilon)
            = \frac1{\varepsilon^2} \innerp{\mu_\theta-\mu_{\theta_0},X^\varepsilon-\K\mu_{\theta_0}}
            -\frac1{2\varepsilon^2} \innerp{\mu_\theta-\mu_{\theta_0},\K(\mu_\theta-\mu_{\theta_0})}
        \end{equation*}
        and so for any $u\in \mathbb{R}^p$ that
        \begin{equation*}
            \begin{aligned}
              \log\dv{P_{\theta_0+\varphi(\varepsilon) u}^\varepsilon}{P_{\theta_0}^\varepsilon}\qty(X^\varepsilon)
              &= \innerp*{ \frac{\mu_{\theta_0+\varphi(\varepsilon)u}- \mu_{\theta_0}}{\varepsilon}, Z }
              -\frac12 \innerp*{ \frac{\mu_{\theta_0+\varphi(\varepsilon)u}- \mu_{\theta_0}}{\varepsilon}, \K \left( 
              \frac{\mu_{\theta_0+\varphi(\varepsilon)u}- \mu_{\theta_0}}{\varepsilon} \right) } \\
              &\eqqcolon m^Z_\varepsilon(u) - \frac12 V_\varepsilon(u).
            \end{aligned}
        \end{equation*}
        Since the map $ \mu \mapsto \innerp{ \mu, \K\mu }$ is continuous and $\mu_\theta$ is continuously differentiable at $\theta_0$,
        \begin{equation*}
            \begin{aligned}
              \lim_{\varepsilon \to 0}V_\varepsilon(u)
              =& \lim_{\varepsilon \to 0} \innerp*{ 
                \frac{\mu_{\theta_0+\varepsilon\Sigma^{-1/2} u}- \mu_{\theta_0}}{\varepsilon}, 
                \K \qty( \frac{\mu_{\theta_0+\varepsilon\Sigma^{-1/2}u}- \mu_{\theta_0}}{\varepsilon} ) } \\
              =& \innerp*{ 
                \left(\Sigma^{-1/2} u\right)^\transp \grad_\theta \mu_{\theta_0},
                \K \qty( \qty( \Sigma^{-1/2} u )^\transp \grad_\theta \mu_{\theta_0} ) } 
              = (\Sigma^{-1/2} u)^\transp \Sigma (\Sigma^{-1/2} u)
              = |u|^2.
            \end{aligned}
        \end{equation*}
        Similarly, it follows for almost all $\omega\in\Omega$ that 
        \begin{equation*}
          \lim_{\varepsilon \to 0} m^Z_\varepsilon(u)
          =u^\transp \Sigma^{-1/2} \innerp{ \grad_\theta \mu_{\theta_0}, Z }
          \quad \text{a.s.}
        \end{equation*}
        Thus,
        \begin{equation*}
          \log \dv{P_{\theta_0+\varphi(\varepsilon) u}^\varepsilon}{P_{\theta_0}^\varepsilon}\qty(X^\varepsilon)
          = u^\transp \Delta_{ \varepsilon, \theta_0 } - \frac12 |u|^2 +o_p(1)
          \quad \text{as} \  \varepsilon \to 0,
        \end{equation*}
        where $\Delta_{ \varepsilon, \theta_0 } \coloneqq \Sigma^{-1/2} \innerp{ \grad_\theta \mu_{\theta_0}, \frac{X^\varepsilon -h_0}{\varepsilon} } \sim N(0,I_p)$ with the $p\times p$ identity matrix $I_p$.
      \end{proof}

      \begin{remark}[Asymptotic efficiency]
        According to \Cref{thm:parametric asymptotic normality}, we see that 
        \begin{equation*}
          \varphi(\varepsilon)^{-1} (\hat{\theta}_\varepsilon -\theta_0)=\Delta_{\varepsilon, \theta_0} + o_p(1),\quad \varepsilon\to 0, 
        \end{equation*}
        that is, $\{\hat{\theta}_\varepsilon\}$ is a sequence of {\it asymptotically centering estimators} at $\theta_0$; see \textcite{jeganathan1982} , Definition 2. Hence, under the LAN condition at $\theta_0$, it is {\it regular} in the sense of \textcite{ibragimov1981statistical}, Definition 9.1; see also \textcite{jeganathan1982}, Theorem 2. Then, \textcite{ibragimov1981statistical}, Theorem 9.1, so-called the {\it convolution theorem}, yields that $\hat{\theta}_\varepsilon$ is {\it asymptotically efficient} because it attains the minimum asymptotic variance. 
      \end{remark}

  \subsection{M-estimation under discrete observations}\label{sec:discrete}

    \subsubsection{Construction of estimators and consistency}\label{subsec:discrete_consistency}

      In this section, we consider the situation where we have the ``discrete" observation $X^{\varepsilon,n}$ with a known covariance operator $\K$ (resp. a covariance function $K$) which is defined as in \Cref{sec:functional} to estimate an unknown mean function $h_0$. The discrete observation $X^{\varepsilon,n}$ is defined as follows:

      \begin{notation}\label{note:discrete} Let $n\in \N$.
        \begin{itemize}
          \item $0=t_0<t_1<\cdots <t_n=T\;$ such that
                $\displaystyle\lim_{n\to\infty} \sup_{i=1,\dots,n} |t_i-t_{i-1}|=0$.   
          \item $X^{\varepsilon,n}\coloneqq (X^\varepsilon_{t_1},X^\varepsilon_{t_2},\dots,X^\varepsilon_{t_n})$.
          \item $T_1\coloneqq[0,t_1]$ and $T_i\coloneqq(t_{i-1},t_i]$ for $i=2,\dots,n$.
          \item $\mu^n_\theta = \sum_{i=1}^n \mu_\theta(T_i)\delta_{t_i} \in \MM$.
        \end{itemize}
      \end{notation}
    
      Similarly to the continuous observation case, 
      we impose \Cref{assump:Theta,assump:cont,assump:isolated}.
      By \Cref{assump:cont}, the function $\theta\mapsto \mu_\theta(T_i)$ is continuous,
      so the contrast function 
      \begin{align*}
        \Phi_{n,\varepsilon}(\theta) 
        &\coloneqq \innerp{ \mu^n_\theta, X^\varepsilon } - \frac12 \innerp{ \mu_\theta, \K^n\mu_\theta } \\
        &= \sum_{i=1}^n \mu_\theta(T_i) \, X^\varepsilon_{t_i} - \frac{1}{2}\sum_{i,j=1}^n \mu_\theta(T_i) \, \mu_\theta(T_j) \, K(t_i,t_j)
      \end{align*}
      is also continuous with respect to $\theta$ for each $n\in \N, \varepsilon>0$ .
      Therefore, due to the maximum principle, the estimators the following estimators $\hat{\theta}$, $\hat{\mu}_{n,\varepsilon}$ and $\hat{h}_{n,\varepsilon}$ are well-defined: 
      \begin{equation*}
        \hat{\theta}_{n,\varepsilon} 
        \coloneqq
        \argmax_{\theta\in \Theta } \Phi_{n,\varepsilon}(\theta), \quad
        \hat{\mu}_{n,\varepsilon} \coloneqq \mu_{\hat{\theta}_{n,\varepsilon}},\quad 
        \hat{h}_{n,\varepsilon} \coloneqq \K\hat{\mu}_{n,\varepsilon}.
      \end{equation*}
    
      \begin{theorem}[Consistency]\label{thm:dis_parametric_consistency}
        Under \Cref{assump:Theta,assump:cont,assump:isolated},
        the following convergences hold:
        \begin{equation*}
            \hat{\theta}_{n,\varepsilon} \asconv \theta_0 \ \  \text{in} \  \mathbb{R}^p,
            \quad 
            \hat{\mu}_{n,\varepsilon} \asconv \mu_{\theta_0} \ \  \text{in} \  \MM,
            \quad 
            \hat{h}_{n,\varepsilon} \asconv h_0 \ \  \text{in} \  \CC \  \text{and} \  \HP
        \end{equation*}
        as $\varepsilon\to 0 $ and $n\to \infty$.
      \end{theorem}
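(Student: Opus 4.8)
The plan is to run the same argmax–consistency argument used for \Cref{thm:functional_convergence} and \Cref{thm:parametric consistency}, applying \Cref{prop:M-estimator} to the random contrast $\Phi_{n,\varepsilon}$ and the deterministic limit $\Phi(\theta)=\innerp{\mu_\theta,h_0}-\frac12\innerp{\mu_\theta,\K\mu_\theta}$ introduced in \Cref{subsec:conti_consistency}. Two ingredients are needed: that $\Phi$ has a well-separated maximum at $\theta_0$, and that $\Phi_{n,\varepsilon}\to\Phi$ uniformly on $\Theta$ almost surely as $\varepsilon\to0$ and $n\to\infty$. The first is immediate from \Cref{prop:ContinuousContrast_UniqueMaximum}: under \Cref{assump:cont,assump:isolated}, $\Phi$ is continuous and $\theta_0$ is its unique maximizer, so for every $\delta>0$ the continuous function $\Phi$ attains on the compact set $\{\theta\in\Theta:\abs{\theta-\theta_0}\ge\delta\}$ a value strictly below $\Phi(\theta_0)$.

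For the uniform convergence, substitute $X^\varepsilon=h_0+\varepsilon Z$ and use $\innerp{\mu_\theta,\K^n\mu_\theta}=\innerp{\mu^n_\theta,\K\mu^n_\theta}$ (see \Cref{apdx:operators}) to decompose
\[
  \Phi_{n,\varepsilon}(\theta)-\Phi(\theta)
  = \innerp{\mu^n_\theta-\mu_\theta,h_0}
  + \varepsilon\innerp{\mu^n_\theta,Z}
  - \tfrac12\bigl(\innerp{\mu^n_\theta,\K\mu^n_\theta}-\innerp{\mu_\theta,\K\mu_\theta}\bigr).
\]
By \Cref{assump:cont,assump:Theta}, $\theta\mapsto\mu_\theta$ is continuous on the compact $\Theta$, so $C_\mu\coloneqq\sup_{\theta\in\Theta}\MN{\mu_\theta}<\infty$, and $\MN{\mu^n_\theta}\le\MN{\mu_\theta}\le C_\mu$ uniformly in $n$ (\Cref{apdx:operators}). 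The first term is bounded in absolute value, uniformly in $\theta$, by $C_\mu$ times the modulus of continuity of $h_0\in\CC$ evaluated at the mesh of $\mathcal Q^n$; the third term, after applying the symmetry of $\K$ and $\SN{\K\mu^n_\theta-\K\mu_\theta}\le C_\mu\,\omega_K(\mathrm{mesh})$, by a constant multiple (depending only on $C_\mu$) of the modulus of continuity $\omega_K$ of the uniformly continuous kernel $K$ on $T^2$ at the mesh; both therefore vanish as $n\to\infty$, uniformly in $\theta$. The middle term satisfies $\abs{\varepsilon\innerp{\mu^n_\theta,Z}}\le\varepsilon\,C_\mu\,\SN{Z}$ with $\SN{Z}<\infty$ almost surely, since $Z$ is a Gaussian vector on $\CC$, and so vanishes as $\varepsilon\to0$, uniformly in $\theta$ and $n$. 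Because the $n$-driven and $\varepsilon$-driven errors decouple in this way, $\sup_{\theta\in\Theta}\abs{\Phi_{n,\varepsilon}(\theta)-\Phi(\theta)}\asconv0$ as $\varepsilon\to0$ and $n\to\infty$, which is the second ingredient.

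With both ingredients in hand, \Cref{prop:M-estimator} yields $\hat\theta_{n,\varepsilon}\asconv\theta_0$ in $\R^p$. The continuous mapping theorem together with \Cref{assump:cont} then gives $\hat\mu_{n,\varepsilon}=\mu_{\hat\theta_{n,\varepsilon}}\asconv\mu_{\theta_0}$ in $\MM$, and since $\K$ is bounded from $\MM$ into $\CC$ and, via $\norm{\K\mu}_{\HP}^2=\innerp{\mu,\K\mu}\le\OPN{\K}\MN{\mu}^2$, also into $\HP$, we obtain $\hat h_{n,\varepsilon}=\K\hat\mu_{n,\varepsilon}\asconv\K\mu_{\theta_0}=h_0$ in $\CC$ and in $\HP$, exactly as in the proof of \Cref{thm:functional_convergence}.

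The only genuinely delicate point is the uniform-in-$\theta$ bookkeeping in the uniform-convergence step: one must propagate the single bound $\sup_\theta\MN{\mu_\theta}<\infty$ (from compactness of $\Theta$ and \Cref{assump:cont}) together with the uniform continuity of $h_0$ on $T$ and of $K$ on $T^2$ through all three terms of the decomposition. The passage to the double limit itself costs nothing, because the discretization error is $\varepsilon$-free while the sampling-noise error is $O(\varepsilon)$ uniformly in $n$; in particular, no convergence rate such as \Cref{assump:convergent_rate} is required for consistency — that will only enter the asymptotic-normality analysis.
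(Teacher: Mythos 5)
Your proposal is correct and follows essentially the same route as the paper: reduce to \Cref{prop:M-estimator} via the well-separated maximum from \Cref{prop:ContinuousContrast_UniqueMaximum}, prove $\sup_{\theta\in\Theta}\abs{\Phi_{n,\varepsilon}(\theta)-\Phi(\theta)}\asconv0$, and finish with the continuous mapping theorem and \Cref{lem:sym_schwartz}. The only difference is cosmetic: the paper splits the error through the intermediate $\Phi_\varepsilon$ and invokes $\OPN{\K^n-\K}\to0$ and the weak$^*$ convergence from \Cref{lem:convergence of discret.mu}, whereas you decompose $\Phi_{n,\varepsilon}-\Phi$ directly into three terms and rederive the same modulus-of-continuity bounds for $h_0$ and $K$; you also correctly note that \Cref{assump:convergent_rate} is not needed here.
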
   

      \begin{proof}
        It follows by definition of $\hat{\theta}_{n,\varepsilon}$ that $\Phi_{n,\varepsilon}(\hat{\theta}_{n,\varepsilon})\ge \Phi_{n,\varepsilon}(\theta_0)$. 
        From \Cref{prop:M-estimator,prop:ContinuousContrast_UniqueMaximum}, it is sufficient to show that
        $\sup_{\theta\in \Theta} |\Phi_{n,\varepsilon}(\theta)-\Phi(\theta)|\asconv 0$ as $\varepsilon\to0$ and $ n \to \infty$. 
        It follows by \Cref{assump:Theta,assump:cont} that
        \begin{equation*}
            \begin{aligned}
              \abs{ \Phi_{n,\varepsilon}(\theta) - \Phi_\varepsilon(\theta) }
              &= \abs{ \innerp{ \mu^n_\theta - \mu_\theta, h_0 + \varepsilon Z } 
              - \frac12 \innerp{ \mu_\theta, \left(\K^n-\K\right)\mu_\theta } } \\
              &\le 2 \varepsilon \CkThMN{\mu_\bullet} \SN{Z} 
              + \abs{ \innerp{\mu^n_\theta -\mu_\theta, \K \mu_0 } }
              + \frac{1}{2} \OPN{\K^n-\K}\CkThMN{\mu_\bullet}^2,
            \end{aligned}
            \end{equation*}
        and it is analogous to the derivation of \Cref{eq:func_conti_contrast_uni_conv} that
        \begin{equation*}
          \sup_{\theta \in \Theta} \abs{ \Phi_\varepsilon(\theta)- \Phi(\theta) }
          = \varepsilon \sup_{\theta \in \Theta} \abs{ \innerp{ \mu_{\theta}, Z } }
          \asconv  0
        \end{equation*}
        as $\varepsilon \to 0$.
        Thus, 
        from \Cref{lem:convergence of discret.mu} we obtain
        \begin{equation*}
          \sup_{\theta\in\Theta} \abs{ \Phi_{n,\varepsilon}(\theta) - \Phi(\theta) } 
          \le \sup_{\theta\in\Theta} \abs{ \Phi_{n,\varepsilon}(\theta) - \Phi_\varepsilon(\theta) }
          + \sup_{\theta\in\Theta} \abs{ \Phi_\varepsilon(\theta)-\Phi(\theta) }
          \asconv 0,
        \end{equation*}
        as $\varepsilon \to 0,\;n \to \infty$.
      \end{proof}
    
    \subsubsection{Asymptotic Normality}\label{subsubsec:AsympNormAsympEffi}

      This section shows the asymptotic properties of estimators from discrete observations. 
      First, we impose an {\it ad hoc} assumption for the convergence $K^n\to K$ stated in \Cref{assump:convergent_rate}, 
      which is essential to deduce the asymptotic normality of our estimators. 
      This assumption can be reduced to a simple condition that $n\varepsilon\to 0$ in a typical situation as described in \Cref{cor:convergent_rate_n_epsilon}, below.

      \begin{theorem}[Asymptotic normality]\label{thm:dis_parametric asymptotic normality}
        Under  \Cref{assump:Theta,assump:cont,assump:isolated,assump:convex_Theta,assump:MuIsC^2,assump:Sigma_regular,assump:convergent_rate}, it holds that 
        \begin{equation*}
          \varepsilon^{-1}(\hat{\theta}_{n,\varepsilon} - \theta_0) \pconv 
          \Sigma^{-1} \innerp{ \grad_\theta \mu_{\theta_0}, Z } 
          \quad \text{as} \ 
          \varepsilon\to 0, \  n\to \infty.
        \end{equation*}
      \end{theorem}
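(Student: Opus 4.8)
The strategy mirrors the proof of \Cref{thm:parametric asymptotic normality}, but now with the discretized contrast $\Phi_{n,\varepsilon}$ in place of $\Phi_\varepsilon$. The plan is to apply the mean value theorem to $\grad_\theta\Phi_{n,\varepsilon}$ on the event $A_{n,\varepsilon}\coloneqq A'_{n,\varepsilon}\cap A''_{n,\varepsilon}$ (defined analogously: $\hat\theta_{n,\varepsilon}\in\mathring\Theta$ and $J_{n,\varepsilon}\coloneqq\int_0^1\nabla^2_\theta\Phi_{n,\varepsilon}((1-u)\theta_0+u\hat\theta_{n,\varepsilon})\dd u$ invertible), obtaining
\[
  (\hat\theta_{n,\varepsilon}-\theta_0)\idv_{A_{n,\varepsilon}}
  = -J_{n,\varepsilon}^{-1}\grad_\theta\Phi_{n,\varepsilon}(\theta_0)\idv_{A_{n,\varepsilon}}.
\]
So it suffices to prove (i) $\Prob(A_{n,\varepsilon})\to1$; (ii) $J_{n,\varepsilon}\pconv-\Sigma$, whence $J_{n,\varepsilon}^{-1}\idv_{A_{n,\varepsilon}}\pconv-\Sigma^{-1}$ by the continuous-mapping argument of \Cref{lem:inverse_convergence}; and (iii) $\varepsilon^{-1}\grad_\theta\Phi_{n,\varepsilon}(\theta_0)\pconv\innerp{\grad_\theta\mu_{\theta_0},Z}$. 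Granting these, $\varepsilon^{-1}(\hat\theta_{n,\varepsilon}-\theta_0)=\varepsilon^{-1}(\hat\theta_{n,\varepsilon}-\theta_0)\idv_{A_{n,\varepsilon}}+o_p(1)\pconv\Sigma^{-1}\innerp{\grad_\theta\mu_{\theta_0},Z}$.

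For (ii) and (i) the key is a uniform convergence $\sup_{\theta\in\Theta}\norm{\nabla^2_\theta\{\Phi_{n,\varepsilon}(\theta)-\Phi(\theta)\}}_F\asconv0$. Writing $\nabla^2_\theta\Phi_{n,\varepsilon}(\theta)=\innerp{\nabla^2_\theta\mu^n_\theta,X^\varepsilon}-\innerp{\nabla^2_\theta\mu_\theta,\K^n\mu_\theta}-\innerp{\grad_\theta\mu_\theta,\K^n\grad_\theta^\transp\mu_\theta}$ and comparing term-by-term with $\nabla^2_\theta\Phi(\theta)$, each difference is controlled by $\varepsilon\CkThMN[2]{\mu_\bullet}\SN{Z}$ (the noise part, as in \Cref{lem:twice_nabla_uni_conv}), by $\OPN{\K^n-\K}\CkThMN[2]{\mu_\bullet}^2$ (the kernel-discretization part, using \Cref{assump:MuIsC^2} and the operator-norm estimates for $(\cdot)^n$ from \Cref{apdx:operators}), and by terms of the form $\abs{\innerp{(\mu^n_\theta)''-\mu_\theta'',\K\mu_0}}$ handled by \Cref{lem:convergence of discret.mu}. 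Since $\OPN{\K^n-\K}\to0$ as $n\to\infty$ and $\varepsilon\to0$, all of these vanish, giving $J_{n,\varepsilon}\pconv-\Sigma$ via the localization argument of \Cref{lem:J_epsilon_convergence} (with \Cref{thm:dis_parametric_consistency} supplying $\hat\theta_{n,\varepsilon}\asconv\theta_0$); then (i) follows exactly as in \Cref{lem:A_epsilon_convergence} from $\det J_{n,\varepsilon}\pconv\det(-\Sigma)\neq0$.

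Step (iii) is where \Cref{assump:convergent_rate} is essential and is the main obstacle. We have $\grad_\theta\Phi_{n,\varepsilon}(\theta_0)=\innerp{\grad_\theta\mu^n_{\theta_0},X^\varepsilon}-\innerp{\grad_\theta\mu_{\theta_0},\K^n\mu_{\theta_0}}$. Since $h_0=\K\mu_{\theta_0}$ and $\grad_\theta\Phi(\theta_0)=0$ gives $\innerp{\grad_\theta\mu_{\theta_0},\K\mu_{\theta_0}}=\innerp{\grad_\theta\mu_{\theta_0},h_0}$, I would decompose
\[
  \varepsilon^{-1}\grad_\theta\Phi_{n,\varepsilon}(\theta_0)
  = \innerp{\grad_\theta\mu^n_{\theta_0},Z}
  + \varepsilon^{-1}\innerp{\grad_\theta\mu^n_{\theta_0}-\grad_\theta\mu_{\theta_0},h_0}
  + \varepsilon^{-1}\innerp{\grad_\theta\mu_{\theta_0},(\K-\K^n)\mu_{\theta_0}}.
\]
The first term converges a.s. to $\innerp{\grad_\theta\mu_{\theta_0},Z}$ by \Cref{lem:convergence of discret.mu} and continuity of $Z$; the second term is $o(1)$ provided the discretization error $\innerp{\grad_\theta\mu^n_{\theta_0}-\grad_\theta\mu_{\theta_0},h_0}$ is $o(\varepsilon)$, which — since $h_0=\K\mu_{\theta_0}\in C(T)$ and the discretization duality $\innerp{\nu^n,\K\mu}=\innerp{\nu,\K^n\mu}$ — reduces to $\varepsilon^{-1}\OPN{\K^n-\K}\to0$, i.e.\ \Cref{assump:convergent_rate} after noting $\OPN{\K^n-\K}\lesssim\norm{K^n-K}_{L^\infty}$; the third term is bounded by $\varepsilon^{-1}\norm{K^n-K}_{L^\infty}\CkThMN[2]{\mu_\bullet}\MN{\mu_{\theta_0}}\to0$ for the same reason. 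Care is needed to make the reduction of the second term precise (one must route it through the $(\cdot)^n$ duality rather than bounding $\MN{\grad_\theta\mu^n_{\theta_0}-\grad_\theta\mu_{\theta_0}}$ directly, since the latter need not be $O(\OPN{\K^n-\K})$), but this is exactly the content of \Cref{assump:convergent_rate} and the operator lemmas in \Cref{apdx:operators}. Finally, the limit $\Sigma^{-1}\innerp{\grad_\theta\mu_{\theta_0},Z}\sim N(0,\Sigma^{-1})$ is the same Gaussian computation as at the end of the proof of \Cref{thm:parametric asymptotic normality}.
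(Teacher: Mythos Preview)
Your proposal is correct and follows essentially the same route as the paper's proof: the paper likewise sets up $J_{n,\varepsilon}$ and $A_{n,\varepsilon}$, proves the analogues of \Cref{lem:twice_nabla_uni_conv,lem:J_epsilon_convergence,lem:A_epsilon_convergence,lem:inverse_convergence} for the discrete contrast, and then isolates the score term; the only organizational difference is that the paper packages your step~(iii) as the statement $\varepsilon^{-1}\abs{\grad_\theta\Phi_{n,\varepsilon}(\theta_0)-\grad_\theta\Phi_\varepsilon(\theta_0)}\pconv 0$ (\Cref{lem:ConvergentOrder}) and then invokes $\varepsilon^{-1}\grad_\theta\Phi_\varepsilon(\theta_0)=\innerp{\grad_\theta\mu_{\theta_0},Z}$, which is the same three-term decomposition you wrote down.

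One small correction: the ``discretization duality'' you quote, $\innerp{\nu^n,\K\mu}=\innerp{\nu,\K^n\mu}$, is not the right identity (by definition $\innerp{\nu,\K^n\mu}=\innerp{\nu^n,\K\mu^n}$, not $\innerp{\nu^n,\K\mu}$). The identity you actually need for the second term is the simpler one $\innerp{\nu^n,f}=\innerp{\nu,f^n}$ for $f\in\CC$, applied with $f=h_0=\K\mu_0$; this gives $\innerp{\grad_\theta\mu^n_{\theta_0}-\grad_\theta\mu_{\theta_0},h_0}=\innerp{\grad_\theta\mu_{\theta_0},(\K\mu_0)^n-\K\mu_0}$, and then $\varepsilon^{-1}\norm{(\K\mu_0)^n-\K\mu_0}_{L^\infty}\le 2\varepsilon^{-1}\norm{K^n-K}_{L^\infty}\MN{\mu_0}\to0$ by \Cref{assump:convergent_rate}, exactly as in the paper's \cref{eq:NormConvergenceOfMeanByCovSmoothness}. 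With that fix your argument goes through verbatim.
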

      Before the proof, 
      we prepare some notations and lemmas.
      
      \begin{notation}
        By \Cref{assump:MuIsC^2}, $\Phi_{n,\varepsilon}:\Theta\to \R$ is twice differentiable,  
        and thus we put
        \begin{equation*}
            J_{n,\varepsilon}
                \coloneqq \int^1_0 \nabla^2_\theta \Phi_{n,\varepsilon} \qty( (1-u) \theta_0 + u \hat{\theta}_{n,\varepsilon} )\dd{u}
            \quad \text{and} \quad 
            A_{n,\varepsilon}
                \coloneqq A^{'}_{n,\varepsilon} \cap A^{''}_{n,\varepsilon},
        \end{equation*}
        where
        \begin{equation*}
            A^{'}_{n,\varepsilon} 
                \coloneqq \Set{ \omega\in \Omega | \hat{\theta}_{n,\varepsilon} \in \mathring{\Theta} }
            \quad \text{and} \quad
            A^{''}_{n,\varepsilon}
                \coloneqq \Set{ \omega\in \Omega | \abs{ \det J_{n,\varepsilon} } \neq 0 }.
        \end{equation*}
      \end{notation}
      
      \begin{lemma}\label{lem:discrete_twice_nabla_uni_conv}
        Under \Cref{assump:Theta,assump:cont,assump:MuIsC^2},
        \begin{equation*}
          \sup_{\theta\in\Theta} \norm{ \nabla^2_\theta \qty{ \Phi_{n,\varepsilon}(\theta) - \Phi(\theta) } }_F \asconv  0
          \quad \text{as} \ 
          \varepsilon \to 0, \  n\to\infty,
        \end{equation*}
        where $\norm{\cdot}_F$ is the Frobenius norm for $p\times p$ matrices.
      \end{lemma}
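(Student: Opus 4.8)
The plan is to reduce to the continuous-observation estimate of \Cref{lem:twice_nabla_uni_conv} via the triangle inequality
\[
\sup_{\theta\in\Theta}\norm{\nabla^2_\theta\{\Phi_{n,\varepsilon}(\theta)-\Phi(\theta)\}}_F
\le
\sup_{\theta\in\Theta}\norm{\nabla^2_\theta\{\Phi_{n,\varepsilon}(\theta)-\Phi_\varepsilon(\theta)\}}_F
+\sup_{\theta\in\Theta}\norm{\nabla^2_\theta\{\Phi_\varepsilon(\theta)-\Phi(\theta)\}}_F,
\]
where the second summand tends to $0$ almost surely as $\varepsilon\to0$ by \Cref{lem:twice_nabla_uni_conv}. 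Hence it suffices to prove that the first, ``discretization'', summand vanishes almost surely as $\varepsilon\to0$, $n\to\infty$.

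For this term, I would first note that differentiation in $\theta$ commutes with the (bounded, linear) discretization operators: since $\mu_\bullet\in\CkThM[2]$ by \Cref{assump:MuIsC^2}, \Cref{lem:measure_partial_change} gives $\partial^\alpha_\theta(\mu^n_\theta)=(\partial^\alpha_\theta\mu_\theta)^n$ for $\abs\alpha\le2$, and, writing $X^\varepsilon=h_0+\varepsilon Z$ and using $\innerp{\mu,\K^n\nu}=\innerp{\mu^n,\K\nu^n}$, the product rule yields for each $(i,j)$
\[
\partial_{ij}\{\Phi_{n,\varepsilon}-\Phi_\varepsilon\}(\theta)
=\innerp{(\partial_{ij}\mu_\theta)^n-\partial_{ij}\mu_\theta,\,h_0+\varepsilon Z}
-\innerp{\partial_{ij}\mu_\theta,(\K^n-\K)\mu_\theta}
-\innerp{\partial_i\mu_\theta,(\K^n-\K)\partial_j\mu_\theta}.
\]
For the two ``covariance'' terms I use $\abs{\innerp{\mu,(\K^n-\K)\nu}}\le\OPN{\K^n-\K}\MN\mu\MN\nu$ together with $\sup_\theta\MN{\partial^\alpha_\theta\mu_\theta}\le\CkThMN[2]{\mu_\bullet}<\infty$ for $\abs\alpha\le2$, which bounds them by $\OPN{\K^n-\K}\CkThMN[2]{\mu_\bullet}^2$; this tends to $0$ deterministically as $n\to\infty$ by \Cref{lem:convergence of discret.mu} (indeed $\OPN{\K^n-\K}\le\norm{K^n-K}_{L^\infty(T^2)}\to0$ since $K$ is uniformly continuous on the compact square $T^2$). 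For the ``mean--noise'' term I transfer the discretization onto the test function via $\innerp{\nu^n-\nu,y}=\innerp{\nu,y^n-y}$ (\Cref{apdx:operators}), obtaining
\[
\abs{\innerp{(\partial_{ij}\mu_\theta)^n-\partial_{ij}\mu_\theta,\,h_0+\varepsilon Z}}
\le\CkThMN[2]{\mu_\bullet}\qty( \norm{h_0^n-h_0}_{L^\infty(T)}+\varepsilon\norm{Z^n-Z}_{L^\infty(T)} ),
\]
where $h_0=\K\mu_0\in\CC$ gives $\norm{h_0^n-h_0}_{L^\infty(T)}\to0$ deterministically, while $\varepsilon\norm{Z^n-Z}_{L^\infty(T)}\le2\varepsilon\SN{Z}\to0$ almost surely. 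All these bounds are uniform in $\theta$ and in $i,j$, so summing over the $p^2$ entries gives $\sup_\theta\norm{\nabla^2_\theta\{\Phi_{n,\varepsilon}-\Phi_\varepsilon\}}_F\asconv0$, which with the displayed triangle inequality concludes the proof.

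The one genuine subtlety — the step I would be most careful about — is that $\mu^n_\theta$ does \emph{not} converge to $\mu_\theta$ in total variation, so the discretization error cannot be estimated ``on the measure side''; it must instead be thrown onto the continuous objects $h_0$, $Z$ and $K$, where uniform continuity on the compact sets $T$ and $T^2$ (together with a.s.\ continuity of the Gaussian sample path $Z$, $Z\sim\mathcal N(0,\K)$ on $\CC$) supplies the needed convergence, while the uniformity in $\theta$ is free from $\mu_\bullet\in\CkThM[2]$. Everything else — differentiating through the linear and bilinear discretization maps and the elementary bounds above — is a routine application of \Cref{lem:measure_partial_change}, \Cref{lem:convergence of discret.mu} and the operator identities collected in \Cref{apdx:operators}.
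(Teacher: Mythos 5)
Your proof is correct and follows essentially the same route as the paper: the paper expands $\partial_{ij}\{\Phi_{n,\varepsilon}-\Phi\}$ directly into the same four terms you obtain (only grouping the noise term as $\varepsilon\innerp{(\partial_{ij}\mu_\theta)^n,Z}$ rather than inserting $\Phi_\varepsilon$ and citing \Cref{lem:twice_nabla_uni_conv}), and then bounds them exactly as you do, via $\OPN{\K^n-\K}\to0$, the uniform bound $\sup_\theta\MN{\partial^\alpha_\theta\mu_\theta}\le\CkThMN[2]{\mu_\bullet}$, and the transfer of the discretization onto the continuous test function $\K\mu_0$ from \Cref{lem:convergence of discret.mu}. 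The only nitpick is a citation slip: the commutation $\partial^\alpha_\theta(\mu^n_\theta)=(\partial^\alpha_\theta\mu_\theta)^n$ is the first bullet of \Cref{lem:convergence of discret.mu}, not of \Cref{lem:measure_partial_change}.
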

      
      \begin{proof}
        By \Cref{lem:measure_partial_change,lem:convergence of discret.mu,,assump:MuIsC^2}, it follows for any $i,j=1,\dots,p$ that
        \begin{align*}
          \sup_{\theta\in\Theta} 
            \abs{ \partial_{ij} \qty{ \Phi_{n,\varepsilon}(\theta) - \Phi(\theta) } } 
          &= \sup_{\theta\in\Theta}
            \left| \varepsilon\innerp{(\partial_{ij} \mu_\theta)^n,Z} + \innerp{\partial_{ij}(\mu_\theta^n -\mu_\theta), \K\mu_0}\right. \\
          &\quad
            \left. - \innerp{\partial_{ij}\mu_\theta, (\K^n-\K) \mu_\theta} 
            - \innerp{\partial_{i}\mu_\theta, (\K^n-\K) \qty(\partial_j\mu_\theta)} \right|\\
          &\le \sup_{\theta\in\Theta} 
            \Big\{ \varepsilon\MN{(\partial_{ij} \mu_\theta)^n}\SN{Z} + \innerp{(\partial_{ij}\mu_\theta)^n -\partial_{ij}\mu_\theta, \K\mu_0}\\
          &\quad
            +\OPN{\K^n-\K}\left(\MN{\partial_{ij}\mu_\theta}\MN{\mu_\theta} + \MN{\partial_{i}\mu_\theta}\MN{\partial_j\mu_\theta}\right) \Big\}. 
        \end{align*}
        Therefore, we obtain the conclusion from \Cref{assump:Theta,assump:cont,lem:convergence of discret.mu}.
      \end{proof}
      
      The following three lemmas are analogous to   \Cref{lem:J_epsilon_convergence,lem:A_epsilon_convergence,lem:inverse_convergence}, respectively.
    
      \begin{lemma}\label{lem:dis_J_epsilon_convergence}
        Under \Cref{assump:cont,assump:convex_Theta,assump:isolated,assump:MuIsC^2,assump:Theta}, $J_{n,\varepsilon} \pconv -\Sigma$ as $\varepsilon \to 0$, \  $n\to\infty$.
      \end{lemma}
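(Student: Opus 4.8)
The plan is to transcribe the proof of \Cref{lem:J_epsilon_convergence} to the discretized contrast, using \Cref{lem:discrete_twice_nabla_uni_conv} in place of \Cref{lem:twice_nabla_uni_conv} and \Cref{thm:dis_parametric_consistency} in place of \Cref{thm:parametric consistency}. Set $\theta^u_{n,\varepsilon}\coloneqq\theta_0+u(\hat{\theta}_{n,\varepsilon}-\theta_0)$ for $u\in[0,1]$, so that $J_{n,\varepsilon}=\int_0^1\nabla^2_\theta\Phi_{n,\varepsilon}(\theta^u_{n,\varepsilon})\,\dd{u}$, and note that $\theta^u_{n,\varepsilon}\in\Theta$ by \Cref{assump:convex_Theta} since $\hat{\theta}_{n,\varepsilon},\theta_0\in\Theta$. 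Fix $r>0$. By \Cref{lem:measure_partial_change} and \Cref{assump:MuIsC^2}, the map $\theta\mapsto\nabla^2_\theta\Phi(\theta)$ is continuous on $\Theta$ with $\nabla^2_\theta\Phi(\theta_0)=-\Sigma$ (\Cref{lem:twice_nabla}); hence there is $\delta>0$ such that $\sup_{|\theta-\theta_0|<\delta}\|\nabla^2_\theta\Phi(\theta)-\nabla^2_\theta\Phi(\theta_0)\|_F<r/3$.

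Next I would split, exactly as in the proof of \Cref{lem:J_epsilon_convergence},
\begin{equation*}
  \|J_{n,\varepsilon}+\Sigma\|_F
  \le \sup_{\theta\in\Theta}\|\nabla^2_\theta\{\Phi_{n,\varepsilon}(\theta)-\Phi(\theta)\}\|_F
  + \int_0^1\|\nabla^2_\theta\{\Phi(\theta^u_{n,\varepsilon})-\Phi(\theta_0)\}\|_F\,\dd{u}\,\idv_{\{|\theta^u_{n,\varepsilon}-\theta_0|\ge\delta\}}
  + \frac{r}{3},
\end{equation*}
where the last summand is the deterministic bound on the ``near'' event coming from the choice of $\delta$, and on the ``far'' event the integrand is bounded because $\nabla^2_\theta\Phi$ is continuous on the compact set $\Theta$ (\Cref{assump:Theta}). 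Since $|\theta^u_{n,\varepsilon}-\theta_0|=u|\hat{\theta}_{n,\varepsilon}-\theta_0|$ is increasing in $u$, taking probabilities yields
\begin{equation*}
  \Prob\!\left(\|J_{n,\varepsilon}+\Sigma\|_F>r\right)
  \le \Prob\!\left(\sup_{\theta\in\Theta}\|\nabla^2_\theta\{\Phi_{n,\varepsilon}(\theta)-\Phi(\theta)\}\|_F>\tfrac{r}{3}\right)
  + \Prob\!\left(|\hat{\theta}_{n,\varepsilon}-\theta_0|\ge\delta\right).
\end{equation*}
The first probability tends to $0$ by \Cref{lem:discrete_twice_nabla_uni_conv} and the second by \Cref{thm:dis_parametric_consistency}; since $r>0$ is arbitrary, this gives $J_{n,\varepsilon}\pconv-\Sigma$ as $\varepsilon\to0$, $n\to\infty$.

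The only point requiring attention — and it is not really an obstacle — is that every limit must be taken in the joint regime $\varepsilon\to0$, $n\to\infty$ rather than in $\varepsilon$ alone; this is harmless because both \Cref{lem:discrete_twice_nabla_uni_conv} and \Cref{thm:dis_parametric_consistency} are already stated for that double limit. Beyond these two inputs and the continuity of $\nabla^2_\theta\Phi$ furnished by \Cref{lem:measure_partial_change}, no new analytic ingredient is needed, so the argument is a routine adaptation of the continuous-observation case.
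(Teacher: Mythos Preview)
Your proposal is correct and follows exactly the route the paper intends: the paper does not spell out a separate proof for this lemma but simply declares it ``analogous to \Cref{lem:J_epsilon_convergence}'', and your transcription---replacing \Cref{lem:twice_nabla_uni_conv} by \Cref{lem:discrete_twice_nabla_uni_conv} and \Cref{thm:parametric consistency} by \Cref{thm:dis_parametric_consistency}---is precisely that adaptation. Your added remark that $|\theta^u_{n,\varepsilon}-\theta_0|=u|\hat{\theta}_{n,\varepsilon}-\theta_0|$ is monotone in $u$ even clarifies a point the paper leaves implicit in the original proof.
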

      
      \begin{lemma}\label{lem:dis_A_epsilon_convergence}
        Under \Cref{assump:cont,assump:convex_Theta,assump:isolated,assump:MuIsC^2,assump:Theta,assump:Sigma_regular},
        $P(A_{n,\varepsilon}) \to 1$ as $\varepsilon\to 0$, $n\to \infty$.
      \end{lemma}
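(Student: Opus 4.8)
The plan is to follow verbatim the route used for \Cref{lem:A_epsilon_convergence}, simply swapping the continuous-observation ingredients for their discrete analogues. First I would decompose $A_{n,\varepsilon}^\complement \subset (A'_{n,\varepsilon})^\complement \cup (A''_{n,\varepsilon})^\complement$, so that it suffices to show each of the two probabilities on the right tends to $0$ as $\varepsilon \to 0$, $n \to \infty$.

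For the term $\Prob\qty((A'_{n,\varepsilon})^\complement)$: by \Cref{assump:convex_Theta} the true value $\theta_0$ lies in the interior $\mathring{\Theta}$, so there is $\delta>0$ with the closed $\delta$-ball around $\theta_0$ contained in $\mathring{\Theta}$. Hence $(A'_{n,\varepsilon})^\complement = \{\hat{\theta}_{n,\varepsilon}\notin\mathring{\Theta}\} \subset \{\abs{\hat{\theta}_{n,\varepsilon}-\theta_0} > \delta\}$, and the probability of the latter vanishes in the joint limit by the consistency statement \Cref{thm:dis_parametric_consistency}. For the term $\Prob\qty((A''_{n,\varepsilon})^\complement)$: by \Cref{lem:dis_J_epsilon_convergence} we have $J_{n,\varepsilon}\pconv -\Sigma$, and since $\det$ is continuous on $\Matp$, the continuous mapping theorem gives $\det J_{n,\varepsilon}\pconv \det(-\Sigma)$; by \Cref{assump:Sigma_regular}, $\Sigma$ is regular, so $\abs{\det(-\Sigma)}=\abs{\det\Sigma}>0$. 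Therefore $(A''_{n,\varepsilon})^\complement=\{\det J_{n,\varepsilon}=0\}\subset\{\abs{\det J_{n,\varepsilon}-\det(-\Sigma)}\ge\abs{\det\Sigma}\}$, whose probability tends to $0$. Combining the two estimates with the union bound yields $\Prob(A_{n,\varepsilon}^\complement)\to 0$, i.e.\ $\Prob(A_{n,\varepsilon})\to 1$.

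There is no genuine obstacle here; the argument is structurally identical to \Cref{lem:A_epsilon_convergence}. The only point needing a little care is that the limit is the joint one in $(\varepsilon,n)$ rather than in $\varepsilon$ alone, but both \Cref{thm:dis_parametric_consistency} and \Cref{lem:dis_J_epsilon_convergence} are already phrased in that joint form, and convergence in probability along such a joint limit is metrizable, so the continuous mapping theorem applies without modification. Thus I expect the proof to be a short paragraph essentially repeating the three displayed inequalities of \Cref{lem:A_epsilon_convergence} with the subscripts and cited results updated.
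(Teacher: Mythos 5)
Your proposal is correct and matches the paper's intended argument: the paper gives no explicit proof of this lemma, stating only that it is analogous to \Cref{lem:A_epsilon_convergence}, and your write-up is exactly that analogue, with \Cref{thm:dis_parametric_consistency} replacing the continuous-observation consistency and \Cref{lem:dis_J_epsilon_convergence} supplying $J_{n,\varepsilon}\pconv-\Sigma$ before the union bound. Your remark that the joint limit in $(\varepsilon,n)$ causes no difficulty for the continuous mapping theorem is also accurate.
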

      
      \begin{lemma}\label{lem:dis_inverse_convergence}
        Under \Cref{assump:cont,assump:convex_Theta,assump:isolated,assump:MuIsC^2,assump:Theta,assump:Sigma_regular},
        $J^{-1}_{n,\varepsilon}\idv_{A_{n,\varepsilon}} 
          \pconv - \Sigma^{-1}$ as $\varepsilon \to 0$, $n \to \infty$.
      \end{lemma}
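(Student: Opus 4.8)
The plan is to transcribe the argument that proved \Cref{lem:inverse_convergence} into the joint asymptotic regime $\varepsilon\to0$, $n\to\infty$, using the discrete analogues \Cref{lem:dis_J_epsilon_convergence,lem:dis_A_epsilon_convergence} in place of \Cref{lem:J_epsilon_convergence,lem:A_epsilon_convergence}. First I would introduce the matrix-inversion map $F:\Matp\to\Matp$ sending an invertible matrix to its inverse and every singular matrix to $0$; Cramer's rule together with continuity of $\det$ shows that $F$ is continuous at every regular point of $\Matp$. Since \Cref{assump:Sigma_regular} guarantees that $-\Sigma$ is regular and \Cref{lem:dis_J_epsilon_convergence} gives $J_{n,\varepsilon}\pconv-\Sigma$, the continuous mapping theorem yields $F(J_{n,\varepsilon})\pconv F(-\Sigma)=-\Sigma^{-1}$ as $\varepsilon\to0$, $n\to\infty$.

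Next I would observe that on the event $A_{n,\varepsilon}$ (more precisely on $A_{n,\varepsilon}''$) the matrix $J_{n,\varepsilon}$ is invertible, so $J_{n,\varepsilon}^{-1}\idv_{A_{n,\varepsilon}}=F(J_{n,\varepsilon})\idv_{A_{n,\varepsilon}}$ identically. By \Cref{lem:dis_A_epsilon_convergence} we have $P(A_{n,\varepsilon})\to1$, hence $\idv_{A_{n,\varepsilon}}\pconv1$, and in particular $P\big(F(J_{n,\varepsilon})\idv_{A_{n,\varepsilon}}\neq F(J_{n,\varepsilon})\big)\le P(A_{n,\varepsilon}^\complement)\to0$, so the two sequences differ by $o_p(1)$. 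Combining this with the convergence $F(J_{n,\varepsilon})\pconv-\Sigma^{-1}$ from the previous step (Slutsky's theorem) gives $J_{n,\varepsilon}^{-1}\idv_{A_{n,\varepsilon}}\pconv-\Sigma^{-1}$, as claimed.

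There is no real obstacle here: the statement is a formal counterpart of \Cref{lem:inverse_convergence} once its two input lemmas have been replaced by their discrete versions, both of which are already available. Accordingly I would keep the write-up brief, spelling out only the continuity of $F$ and the reduction via $P(A_{n,\varepsilon})\to1$, and otherwise refer the reader to the proof of \Cref{lem:inverse_convergence} for the remaining routine details.
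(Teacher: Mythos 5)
Your argument is correct and is exactly the proof the paper intends: the paper states this lemma as ``analogous to'' \Cref{lem:inverse_convergence}, and your transcription --- continuity of the inversion map $F$ at regular matrices via Cramer's rule, the continuous mapping theorem applied to $J_{n,\varepsilon}\pconv-\Sigma$ from \Cref{lem:dis_J_epsilon_convergence}, and the reduction via $P(A_{n,\varepsilon})\to1$ from \Cref{lem:dis_A_epsilon_convergence} --- matches that template step for step.
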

            
      \begin{lemma}\label{lem:ConvergentOrder}
        Under \Cref{assump:convex_Theta,assump:MuIsC^2,assump:Theta,assump:convergent_rate},
        \begin{equation*}
          \varepsilon^{-1} \abs{\grad_\theta \Phi_{n,\varepsilon}(\theta_0) - \grad_\theta \Phi_\varepsilon(\theta_0) } 
          \pconv 0, \quad \varepsilon\to 0, \  n\to \infty.
        \end{equation*}
      \end{lemma}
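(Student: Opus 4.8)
The plan is to write the difference $\grad_\theta\Phi_{n,\varepsilon}(\theta_0)-\grad_\theta\Phi_\varepsilon(\theta_0)$ in closed form and split it into one deterministic piece that \Cref{assump:convergent_rate} is tailor-made to kill and one stochastic piece that, crucially, does not carry the factor $\varepsilon^{-1}$. Set $\mu\coloneqq\mu_{\theta_0}$ and $\nu_i\coloneqq\partial_i\mu_{\theta_0}$; these are finite signed measures by \Cref{assump:MuIsC^2}, and $\theta_0$ is interior by \Cref{assump:Theta,assump:convex_Theta}, so both gradients are defined. I would first use \Cref{lem:measure_partial_change} and the symmetry of $\K$ to record
\begin{equation*}
  \partial_i\Phi_\varepsilon(\theta_0)=\innerp{\nu_i,X^\varepsilon}-\innerp{\nu_i,\K\mu},
  \qquad
  \partial_i\Phi_{n,\varepsilon}(\theta_0)=\innerp{\nu_i^n,X^\varepsilon}-\innerp{\nu_i,\K^n\mu}.
\end{equation*}
Inserting $X^\varepsilon=\K\mu+\varepsilon Z$, using $\innerp{\nu_i,\K^n\mu}=\innerp{\nu_i^n,\K\mu^n}$, and cancelling the common $\innerp{\nu_i,\K\mu}$ terms makes the difference telescope to
\begin{equation*}
  \partial_i\Phi_{n,\varepsilon}(\theta_0)-\partial_i\Phi_\varepsilon(\theta_0)
  =\innerp{\nu_i^n,\K(\mu-\mu^n)}+\varepsilon\innerp{\nu_i^n-\nu_i,Z},
\end{equation*}
so that after dividing by $\varepsilon$ only two terms remain to bound, componentwise in $i=1,\dots,p$.

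The stochastic term needs nothing subtle: $\innerp{\nu_i^n-\nu_i,Z}=\sum_j\int_{T_j}(Z_{t_j}-Z_t)\,\nu_i(\dd{t})$ is dominated by the modulus of continuity of the path $Z$ at the mesh width times $\MN{\nu_i}$, and since $Z$ is almost surely continuous on the compact interval $T$ this tends to $0$ almost surely as $n\to\infty$; this is exactly \Cref{lem:convergence of discret.mu} applied to $\nu_i\in\MM$.

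The substance of the argument --- and the step I expect to be the main obstacle --- is the deterministic term $\varepsilon^{-1}\innerp{\nu_i^n,\K(\mu-\mu^n)}$, since the naive bound, estimating $\SN{\K(\mu-\mu^n)}$ by the modulus of continuity of $h_0=\K\mu$ at the mesh width, does not survive multiplication by $\varepsilon^{-1}$. The resolution is to notice that $\K(\mu-\mu^n)$ does not measure an arbitrary oscillation but precisely the cell-to-cell oscillation of the covariance kernel: for $s\in T_i$ and $t\in T_j$ one has $K^n(s,t)=K^n(s,t_j)=K(t_i,t_j)$, hence $\abs{K(s,t)-K(s,t_j)}\le 2\|K^n-K\|_{L^\infty}$; summing $\K(\mu-\mu^n)(s)=\sum_j\int_{T_j}\bigl(K(s,t)-K(s,t_j)\bigr)\,\mu(\dd{t})$ against $\abs{\mu}$ gives $\SN{\K(\mu-\mu^n)}\le 2\|K^n-K\|_{L^\infty}\MN{\mu}$. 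Together with $\MN{\nu_i^n}\le\MN{\nu_i}$ this yields
\begin{equation*}
  \varepsilon^{-1}\abs{\innerp{\nu_i^n,\K(\mu-\mu^n)}}
  \le 2\,\MN{\nu_i}\,\MN{\mu}\,\varepsilon^{-1}\|K^n-K\|_{L^\infty},
\end{equation*}
which tends to $0$ as $\varepsilon\to0$, $n\to\infty$ by \Cref{assump:convergent_rate}. Assembling the $p$ coordinates then gives $\varepsilon^{-1}\abs{\grad_\theta\Phi_{n,\varepsilon}(\theta_0)-\grad_\theta\Phi_\varepsilon(\theta_0)}\pconv 0$. The point to highlight is that this identification of the function-side discretization error of $h_0$ with the very quantity controlled in \Cref{assump:convergent_rate} is precisely what makes that ad hoc hypothesis the right one here --- in contrast with \Cref{lem:discrete_twice_nabla_uni_conv}, where the analogous error appears without the $\varepsilon^{-1}$ factor and only \Cref{lem:convergence of discret.mu} is needed.
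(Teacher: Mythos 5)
Your proof is correct and follows essentially the same route as the paper's: the stochastic term $\innerp{\nu_i^n-\nu_i,Z}$, which carries no factor of $\varepsilon^{-1}$, is handled by \Cref{lem:convergence of discret.mu}, while the deterministic discretization error is reduced to $\varepsilon^{-1}\norm{K^n-K}_{L^\infty}$ by the same triangle inequality through $K(t_i,t_j)$ that the paper uses in \Cref{eq:NormConvergenceOfMeanByCovSmoothness}, after which \Cref{assump:convergent_rate} finishes. The only cosmetic difference is that you regroup the deterministic error into the single term $\innerp{\nu_i^n,\K(\mu-\mu^n)}$, whereas the paper splits it into a $(\K\mu_0)^n-\K\mu_0$ piece and a $(\K^n-\K)$ piece; both regroupings are controlled by the identical kernel-oscillation bound.
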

      
      \begin{proof}
        As in the proof of \Cref{thm:dis_parametric_consistency}, we have
        \begin{equation*}
            \begin{aligned}
              \varepsilon^{-1} \abs{ \partial_i\qty( \Phi_{n,\varepsilon}(\theta)-\Phi_\varepsilon(\theta)) }
              &= \varepsilon^{-1} \abs{ \partial_i \innerp{ \mu^n_\theta -\mu_\theta, h_0+\varepsilon Z } 
              -\frac{1}{2} \partial_i \innerp{ \mu_\theta, \left(\K^n-\K\right)\mu_\theta } } \\
              &\le \abs{ \innerp{ (\partial_i\mu_\theta)^n - \partial_i\mu_\theta, Z} }
              + \varepsilon^{-1} \abs{ \innerp{\partial_i\mu_\theta, (\K\mu_0)^n - \K\mu_0 } } \\
              &\quad+ \frac{1}{2} \varepsilon^{-1}\|K^n-K\|_{L^\infty}\CkThMN{\mu_\bullet}^2. 
            \end{aligned}
        \end{equation*}
        It follows by \Cref{assump:convergent_rate} that
        \begin{equation}
          \begin{aligned}
            &\varepsilon^{-1}\|(\K\mu_0)^n-\K\mu_0\|_{L^\infty} \\
            &\quad 
              \le \varepsilon^{-1} \sup_{t\in T} \abs{ \qty( \int_T K(s,t) \, \mu_0(\dd{s}) )^n - \int_T K(s,t)\mu_0(\dd{s})} \\
            &\quad 
              \le \varepsilon^{-1} \sup_{t\in T} \sum_{i,j=1}^n \int_T \qty(\abs{K(s,t_i)-K(t_i,t_j)} 
               + \abs{K(t_i,t_j)-K(s,t)} ) \, \idv_{T_i\times T_j}(t,s) \, \mu_0(\dd{s})\\
            &\quad 
              \le 2 \varepsilon^{-1}\norm{K^n - K}_{L^\infty} \MN{\mu_0}
            \to 0,
          \end{aligned}
          \label{eq:NormConvergenceOfMeanByCovSmoothness}
        \end{equation}
        as $\varepsilon\to 0$, $n\to\infty$.
        Thus, by \Cref{assump:convex_Theta,assump:MuIsC^2,assump:Theta}, the conclusion holds.
      \end{proof}
      
      \begin{proof}[Proof of \Cref{thm:dis_parametric asymptotic normality}]
        Similarly to the proof of \Cref{thm:parametric asymptotic normality}, 
        \begin{equation}
            \begin{aligned}
                \varepsilon^{-1} \qty( \hat{\theta}_{n,\varepsilon} - \theta_0 )
                &= \varepsilon^{-1} \qty( \hat{\theta}_{n,\varepsilon}-\theta_0 ) \idv_{A_{n,\varepsilon}}
                + \varepsilon^{-1} \qty( \hat{\theta}_{n,\varepsilon} - \theta_0 ) \idv_{A_{n,\varepsilon}^\complement} \\
                &= -\varepsilon^{-1} J_{n,\varepsilon}^{-1} \grad_\theta \Phi_{n,\varepsilon}(\theta_0)\idv_{A_{n,\varepsilon}}
                + o_p(1).
            \end{aligned}
          \label{eq:dis_asymptotic_1}
        \end{equation}
        Since $\grad_\theta \Phi(\theta_0)=0$ and $\grad_\theta \Phi_\varepsilon(\theta_0)=\varepsilon\innerp{\grad \mu_{\theta_0},Z}$ by \Cref{prop:ContinuousContrast_UniqueMaximum}, it follows by  \Cref{lem:measure_partial_change,lem:dis_inverse_convergence,lem:ConvergentOrder} and by \Cref{assump:convergent_rate} that
        \begin{equation}
            \begin{aligned}
              \varepsilon^{-1} J_{n,\varepsilon}^{-1} \grad_\theta \Phi_{n,\varepsilon} (\theta_0) \idv_{A_{n,\varepsilon}}
              &= 
                \varepsilon^{-1} J_{n,\varepsilon}^{-1} \qty( \grad_\theta \Phi_{n,\varepsilon}(\theta_0) 
                - \grad_\theta \Phi_\varepsilon(\theta_0) ) \idv_{A_{n,\varepsilon}}
                + J_{n,\varepsilon}^{-1} \innerp*{ \grad_\theta \mu_{\theta_0}, Z } 
                \idv_{A_\varepsilon} \\
              &\pconv -\Sigma^{-1} \innerp*{ \grad_\theta \mu_{\theta_0}, Z }
            \end{aligned}
          \label{eq:dis_Z1_convergence}
        \end{equation}
        as $n\to \infty$ and $\varepsilon\to 0$.
        Therefore, it follows by \Cref{eq:dis_asymptotic_1,eq:dis_Z1_convergence} that
        \begin{equation*}
            \varepsilon^{-1} \left( \hat{\theta}_{n,\varepsilon} -\theta_0 \right) \pconv
            \Sigma^{-1} \innerp*{ \grad_\theta \mu_{\theta_0}, Z }
        \end{equation*}
        as $n\to \infty$ and $\varepsilon\to 0$.
      \end{proof}
      
      \begin{remark}
        Since the asymptotic variance of the estimator from discrete observations is the same as the one from continuous observations, we can conclude that our estimator is also asymptotically efficient in the sense of minimum asymptotic variance.
      \end{remark}
      
      \begin{corollary}\label{cor:convergent_rate_n_epsilon}
        Assume 
        \begin{equation}\label{eq:t_order}
          \sup_{i=1,\dots,n} \abs{ t_i - t_{i-1} } = \order{\frac1n},
        \end{equation} 
        and $K$ be Lipschitz continuous on $T^2$.
        Then, under \Cref{assump:Theta,assump:cont,assump:isolated,assump:convex_Theta,assump:MuIsC^2,assump:Sigma_regular},
        \begin{equation*}    
          \varepsilon^{-1}\qty( \hat{\theta}_{n,\varepsilon}-\theta_0 ) \pconv \Sigma^{-1} \innerp{ \grad_\theta \mu_{\theta_0}, Z },
        \end{equation*}
        as $n\to\infty$, $\varepsilon\to0$ and $n\varepsilon\to \infty $. 
      \end{corollary}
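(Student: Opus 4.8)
The plan is simply to check that the two concrete hypotheses of the corollary --- a mesh of order $1/n$ together with a Lipschitz covariance function --- force \Cref{assump:convergent_rate} to hold in the regime $n\to\infty$, $\varepsilon\to0$, $n\varepsilon\to\infty$, after which the conclusion is exactly that of \Cref{thm:dis_parametric asymptotic normality}.

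First I would estimate $\|K^n-K\|_{L^\infty(T^2)}$ by the mesh size alone. Fix $(t,s)\in T^2$ and let $i,j$ be the indices with $t\in T_i$ and $s\in T_j$, so that $K^n(t,s)=K(t_i,t_j)$ by the definition of the discretized function. Since both $t$ and $t_i$ lie in the interval $T_i$ of length $|t_i-t_{i-1}|$, and likewise $s,t_j\in T_j$, the Lipschitz continuity of $K$ on $T^2$ --- with constant $L$, say with respect to the $\ell^1$ norm on $\R^2$ --- gives
\begin{equation*}
  \abs{ K^n(t,s)-K(t,s) }
  = \abs{ K(t_i,t_j)-K(t,s) }
  \le L\qty( \abs{t-t_i} + \abs{s-t_j} )
  \le 2L \sup_{k=1,\dots,n} \abs{t_k-t_{k-1}} .
\end{equation*}
Taking the supremum over $(t,s)\in T^2$ and invoking \eqref{eq:t_order} yields $\|K^n-K\|_{L^\infty(T^2)} = \order{1/n}$, and therefore
\begin{equation*}
  \varepsilon^{-1} \|K^n-K\|_{L^\infty} = \order{ \frac{1}{n\varepsilon} } \to 0
\end{equation*}
along the regime $n\to\infty$, $\varepsilon\to0$, $n\varepsilon\to\infty$; this is precisely \Cref{assump:convergent_rate}.

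Since the remaining hypotheses \Cref{assump:Theta,assump:cont,assump:isolated,assump:convex_Theta,assump:MuIsC^2,assump:Sigma_regular} are assumed in the corollary, \Cref{thm:dis_parametric asymptotic normality} applies and gives $\varepsilon^{-1}(\hat{\theta}_{n,\varepsilon}-\theta_0) \pconv \Sigma^{-1} \innerp{\grad_\theta \mu_{\theta_0}, Z}$, which is the claim. I do not expect any genuine obstacle here: the statement is a bookkeeping corollary whose only purpose is to translate the abstract rate condition \Cref{assump:convergent_rate} into two readily verifiable hypotheses. The only points that need a little care are (i) keeping track of which norm on $\R^2$ the Lipschitz constant of $K$ refers to, which only changes the harmless factor $2L$ above, and (ii) observing that the relevant regime is $n\varepsilon\to\infty$, so that $1/(n\varepsilon)\to0$, consistently with the joint limit $\varepsilon\to0$, $n\to\infty$ appearing in \Cref{assump:convergent_rate}.
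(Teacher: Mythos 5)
Your proposal is correct and follows exactly the paper's route: bound $\|K^n-K\|_{L^\infty}$ by the mesh size via the Lipschitz continuity of $K$, conclude $\varepsilon^{-1}\|K^n-K\|_{L^\infty}=\order{1/(n\varepsilon)}\to0$ so that \Cref{assump:convergent_rate} holds, and then invoke \Cref{thm:dis_parametric asymptotic normality}. Your write-up is in fact slightly more explicit than the paper's, which states the same Lipschitz/mesh estimate without spelling out the triangle-inequality step.
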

      
      \begin{proof}
        By \Cref{thm:dis_parametric asymptotic normality}, it suffices to prove that the convergent rate \Cref{assump:convergent_rate} is satisfied. 
        It follows from \Cref{lem:measure_partial_change,eq:t_order} that
        \begin{equation*}
          \varepsilon^{-1}\|K^n - K\|_{L^\infty}
          \le\varepsilon^{-1} \max_{s\in T_i, t\in T_j} \abs{K(t_i, t_j)- K(s,t)}
          =\order{\frac{1}{n\varepsilon}}
        \end{equation*}
        as $n\varepsilon\to\infty$, and so, it completes the proof.
      \end{proof}

    \subsubsection{Moment Convergence}\label{sec:MomentConvergence}

      In this section, we show the moment convergence of $\hat{\theta}_{n,\varepsilon}$.
      In a statistical context of this issue, there is a celebrated result brought in the far-famed method initiated by \textcite{yoshida2011polynomial} which uses the \emph{polynomial type large deviation inequality} for the likelihood ratio random fields. 
      However, our situation is much simpler: we focus on a Gaussian case, and our contrast function is formally quadratic. In such a case, we can take an alternative root to get the moment convergence results.

      The crux of our approach is to use the \emph{energy inequality method}, which is well-known in literatures of PDEs (cf. \textcite{evans2010partial}, \textcite{ladyzenskaja1968linear} and among others), 
      and the highlight of our argument is to obtain an \emph{energy upper bound} in \Cref{prop:EnergyForMomentConv}.

      We establish at the onset the following \emph{coercive property}, which corresponds to the identifiability condition for $\Phi(\theta)$ in the statistical point of view; see, e.g., \textcite{yoshida2011polynomial}, (A.3). 

      \begin{proposition}[Coercive property]\label{prop:coersive}
        Under \Cref{assump:Theta,assump:cont,assump:isolated,assump:convex_Theta,assump:MuIsC^2,assump:Sigma_regular},
        there is a positive constant $c>0$ such that
        \begin{equation*}
            \begin{aligned}
              \innerp*{\mu_\theta - \mu_{\theta_0}, \K(\mu_\theta - \mu_{\theta_0})}
              &\geq c \abs{ \theta - \theta_0}^2, \\
              \innerp*{\mu_\theta - \mu_{\theta_0}, \K(\mu_\theta - \mu_{\theta_0})}
              &\geq c \MN{ \mu_\theta - \mu_{\theta_0}}^2
            \end{aligned}
        \end{equation*}
        for all $\theta\in\Theta$.
      \end{proposition}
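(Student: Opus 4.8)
The plan is to prove the two inequalities simultaneously by a compactness-plus-continuity argument, exploiting that both sides are continuous in $\theta$ and that the quadratic form $\mu\mapsto\innerp{\mu,\K\mu}$ vanishes exactly on $\ker\K$. First I would rewrite the left-hand side using the factorization $\K=II^*$ from the earlier remark, so that $\innerp{\mu_\theta-\mu_{\theta_0},\K(\mu_\theta-\mu_{\theta_0})}=\norm{I^*(\mu_\theta-\mu_{\theta_0})}_{\XP}^2$. This makes the quantity manifestly nonnegative and equal to zero precisely when $I^*(\mu_\theta-\mu_{\theta_0})=0$, i.e.\ when $\K(\mu_\theta-\mu_{\theta_0})=0$, which by \Cref{assump:isolated} forces $\theta=\theta_0$. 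So the function $g(\theta)\coloneqq\innerp{\mu_\theta-\mu_{\theta_0},\K(\mu_\theta-\mu_{\theta_0})}$ is continuous (by \Cref{assump:cont} together with \Cref{lem:sym_schwartz}-type boundedness of $\K$), nonnegative, and strictly positive off $\theta_0$.

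For the first inequality, consider the ratio $g(\theta)/|\theta-\theta_0|^2$ on $\Theta\setminus\{\theta_0\}$. Away from any neighborhood of $\theta_0$ this ratio is bounded below by a positive constant via compactness of $\Theta$ (\Cref{assump:Theta}) and positivity of $g$. Near $\theta_0$, I would use the $C^2$ regularity (\Cref{assump:MuIsC^2}): a Taylor expansion gives $\mu_\theta-\mu_{\theta_0}=(\theta-\theta_0)^\transp\grad_\theta\mu_{\theta_0}+o(|\theta-\theta_0|)$ in $\MM$, hence
\begin{equation*}
  g(\theta)=(\theta-\theta_0)^\transp\Sigma\,(\theta-\theta_0)+o(|\theta-\theta_0|^2),
\end{equation*}
and since $\Sigma$ is regular (\Cref{assump:Sigma_regular}) and positive semidefinite — indeed positive definite as a Gram matrix, per \Cref{note:SigmaIsGramMatrix} — its smallest eigenvalue $\lambda_{\min}>0$ yields $g(\theta)\geq\frac{\lambda_{\min}}{2}|\theta-\theta_0|^2$ for $\theta$ close enough to $\theta_0$. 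Combining the local bound with the bound away from $\theta_0$ gives a uniform constant $c_1>0$.

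For the second inequality I would argue similarly with $\MN{\mu_\theta-\mu_{\theta_0}}^2$ in the denominator. The continuity of $\theta\mapsto\MN{\mu_\theta-\mu_{\theta_0}}$ and compactness of $\Theta$ handle the region bounded away from $\theta_0$ (where the denominator is also bounded above, so no singularity). Near $\theta_0$, using $\mu_\theta-\mu_{\theta_0}=(\theta-\theta_0)^\transp\grad_\theta\mu_{\theta_0}+o(|\theta-\theta_0|)$ one gets $\MN{\mu_\theta-\mu_{\theta_0}}\leq C|\theta-\theta_0|$, so the first inequality $g(\theta)\geq c_1|\theta-\theta_0|^2\geq (c_1/C^2)\MN{\mu_\theta-\mu_{\theta_0}}^2$ transfers the bound locally; then take $c_2$ to be the minimum of this local constant and the compactness constant. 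Finally set $c\coloneqq\min\{c_1,c_2\}$.

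The main obstacle I anticipate is the behavior near $\theta_0$: one must be careful that the first-order Taylor remainder is genuinely $o(|\theta-\theta_0|)$ in the total variation norm (not merely in a weaker topology), which is exactly what \Cref{assump:MuIsC^2} (membership in $\CkThM[2]$, hence $C^1$ with uniformly continuous derivative) provides — this is where the strength of the norm on $\CkThM$ is used. A secondary subtlety is ruling out that $\MN{\mu_\theta-\mu_{\theta_0}}$ vanishes at some $\theta\neq\theta_0$ while $g(\theta)$ does not; but $\MN{\mu_\theta-\mu_{\theta_0}}=0$ means $\mu_\theta=\mu_{\theta_0}$, hence $g(\theta)=0$, hence $\theta=\theta_0$ by \Cref{assump:isolated}, so no genuine singularity arises in the second ratio either. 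Everything else is routine compactness bookkeeping.
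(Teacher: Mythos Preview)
Your proposal is correct and follows essentially the same route as the paper: a local Taylor expansion near $\theta_0$ using the $C^2$ regularity and the positive definiteness of $\Sigma$ to get a quadratic lower bound there, combined with a compactness/identifiability argument on the complement $\{|\theta-\theta_0|\geq\delta\}$. The paper's treatment of the second inequality is terser (it just records $\MN{\mu_\theta-\mu_{\theta_0}}^2\leq C|\theta-\theta_0|^2+o(|\theta-\theta_0|^2)$ and says ``by the same argument''), but your way of transferring the first bound via $\MN{\mu_\theta-\mu_{\theta_0}}\leq C|\theta-\theta_0|$ near $\theta_0$ is the same idea spelled out.
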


      \begin{proof}
        From Taylar's formula,
        \begin{equation*}
            \begin{aligned}
              \innerp*{\mu_\theta - \mu_{\theta_0}, \K(\mu_\theta - \mu_{\theta_0})}
              &= (\theta-\theta_0)^\transp \Sigma (\theta-\theta_0)
                + o(\abs{ \theta - \theta_0 }^2) \\
              &\geq \lambda_{\Sigma}^{\mathsf{min}} 
                  \abs{ \theta - \theta_0 }^2
                + o(\abs{ \theta - \theta_0 }^2)
                \quad \text{as} \ \ 
                \theta \to \theta_0,
            \end{aligned}
        \end{equation*}
        where $\lambda_{\Sigma}^{\mathsf{min}}>0$ is the minimum eigenvalue of $\Sigma$. 
        There is some $\delta>0$ such that
        \begin{equation*}
          \innerp*{\mu_\theta - \mu_{\theta_0}, 
              \K(\mu_\theta - \mu_{\theta_0})}
          \geq \frac{ \lambda_{\Sigma}^{\mathsf{min}} }{ 2 } 
              \abs{ \theta - \theta_0 }^2
          \quad \text{when} \ \ 
          \abs{ \theta - \theta_0 } < \delta.
        \end{equation*}
        Since $\innerp*{\mu_\theta - \mu_{\theta_0}, \K(\mu_\theta - \mu_{\theta_0})} = 0$ if and only if $\theta=\theta_0$ by \Cref{assump:isolated} 
        and $\Theta$ is compact, we obtain that 
        \begin{equation*}
          \inf_{\substack{\abs{ \theta - \theta_0 } \geq \delta \\ \theta \in \Theta}}
          \frac{ \innerp*{\mu_\theta - \mu_{\theta_0}, \K(\mu_\theta - \mu_{\theta_0})} }{ \abs{ \theta - \theta_0 }^2 } > 0,
        \end{equation*}
        which is the first inequality.
        Next, we have 
        \begin{equation*}
          \MN{ \mu_\theta - \mu_{\theta_0}}^2 \leq \lambda_{\Sigma}^{\mathsf{max}} \abs{\theta - \theta_0}^2 + o(\abs{ \theta - \theta_0 }^2 )
        \end{equation*}
        and by the same argument as above, we obtain the second inequality.
      \end{proof}

      Apparently, this result is helpful for us not only to make convergence theorems for $\hat{\theta}_{n,\varepsilon}$,
      but also
      to elaborate an \emph{energy upper bound} for $\hat{\mu}_{n,\varepsilon}$
      when we prove the following proposition.

      \begin{proposition}[Energy upper bounds]\label{prop:EnergyForMomentConv}
        Under the same assumptions as in \Cref{prop:coersive},
        \begin{equation}\label{ineq:EnergyForMomentConv3}
          \begin{aligned}
              \frac{c_1}{\varepsilon^2}
              \innerp*{ \mu_{ \hat{\theta}_{n,\varepsilon}} - \mu_{ \theta_0 } , \K ( \mu_{ \hat{\theta}_{n,\varepsilon}} - \mu_{ \theta_0 } ) } 
            & \leq 
              \frac1{\varepsilon^2}
              \norm{K^n - K}_{L^\infty}^2
              + \abs{ \innerp*{ \grad_\theta \mu_{ \theta_0 } , Z } }^2 
              + \sup_{\theta\in\Theta} \norm{ \innerp*{ \nabla_\theta^2 \mu_\theta , Z } }_F^2 \\
            &\quad 
              + \abs{ \innerp*{ \grad_\theta ( \mu^n - \mu )_{ \theta_0 } , Z } }^2
              + \sup_{\theta\in\Theta} \norm{ \innerp*{ \nabla_\theta^2 ( \mu^n - \mu )_\theta , Z } }_F^2,
          \end{aligned}
        \end{equation}
        where the positive constants $c$
        depends only on $\K$, $\theta_0$, $\{\mu_\theta\}_{\theta\in\Theta}$ and $\Theta$.
      \end{proposition}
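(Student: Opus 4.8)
The plan is an \emph{energy inequality} argument. Abbreviate $\hat\mu:=\mu_{\hat\theta_{n,\varepsilon}}$, $\mu_0:=\mu_{\theta_0}$, and set $\mathcal E:=\innerp{\hat\mu-\mu_0,\K(\hat\mu-\mu_0)}$ (the quantity to be bounded) and $\mathcal E_n:=\innerp{\hat\mu^n-\mu_0^n,\K(\hat\mu^n-\mu_0^n)}=\innerp{\hat\mu-\mu_0,\K^n(\hat\mu-\mu_0)}$. First I would use the defining maximality $\Phi_{n,\varepsilon}(\hat\theta_{n,\varepsilon})\ge\Phi_{n,\varepsilon}(\theta_0)$ together with $\innerp{\mu,\K^n\mu}=\innerp{\mu^n,\K\mu^n}$, the identity $\innerp{\hat\mu^n,\K\hat\mu^n}-\innerp{\mu_0^n,\K\mu_0^n}=\mathcal E_n+2\innerp{\hat\mu^n-\mu_0^n,\K\mu_0^n}$, and $X^\varepsilon=\K\mu_0+\varepsilon Z$ to rearrange everything into
\begin{equation*}
  \tfrac12\mathcal E_n\le\innerp{\hat\mu^n-\mu_0^n,\K(\mu_0-\mu_0^n)}+\varepsilon\innerp{\hat\mu^n-\mu_0^n,Z}.
\end{equation*}

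Next I would relate $\mathcal E_n$ back to $\mathcal E$. Since $|\mathcal E_n-\mathcal E|\le\OPN{\K^n-\K}\MN{\hat\mu-\mu_0}^2\le\norm{K^n-K}_{L^\infty}\MN{\hat\mu-\mu_0}^2$ (a property of the discretization operators, \Cref{apdx:operators}) and $\MN{\hat\mu-\mu_0}^2\le c^{-1}\mathcal E$ by the second coercive estimate of \Cref{prop:coersive}, one gets $\mathcal E_n\ge(1-c^{-1}\norm{K^n-K}_{L^\infty})\mathcal E$. If $\norm{K^n-K}_{L^\infty}>c/2$ the claimed bound is vacuous once $c_1$ is small, because $\mathcal E\le\OPN{\K}\sup_{\theta\in\Theta}\MN{\mu_\theta-\mu_0}^2$ is then dominated by a constant multiple of $\norm{K^n-K}_{L^\infty}^2$; so I may assume $\norm{K^n-K}_{L^\infty}\le c/2$ and hence $\mathcal E_n\ge\mathcal E/2$.

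It remains to bound the right-hand side of the energy inequality. For the discretization term, $|\innerp{\hat\mu^n-\mu_0^n,\K(\mu_0-\mu_0^n)}|\le\MN{\hat\mu-\mu_0}\,\SN{\K(\mu_0-\mu_0^n)}$ and the uniform continuity of $K$ on $T^2$ gives $\SN{\K(\mu_0-\mu_0^n)}\le2\MN{\mu_0}\norm{K^n-K}_{L^\infty}$ (cf.\ the computation in \eqref{eq:NormConvergenceOfMeanByCovSmoothness}). For the noise term, expand $\hat\mu-\mu_0$ in $\MM$ by Taylor's formula along the segment $[\theta_0,\hat\theta_{n,\varepsilon}]\subset\Theta$ (\Cref{assump:convex_Theta,assump:MuIsC^2}), apply the bounded linear functional $\nu\mapsto\innerp{\nu^n,Z}$ on $\MM$, and split $(\grad_\theta\mu_{\theta_0})^n=\grad_\theta\mu_{\theta_0}+\grad_\theta(\mu^n-\mu)_{\theta_0}$ and $(\nabla^2_\theta\mu_\theta)^n=\nabla^2_\theta\mu_\theta+\nabla^2_\theta(\mu^n-\mu)_\theta$, to get
\begin{equation*}
  |\innerp{\hat\mu^n-\mu_0^n,Z}|\le|\hat\theta_{n,\varepsilon}-\theta_0|\,\big(|\innerp{\grad_\theta\mu_{\theta_0},Z}|+|\innerp{\grad_\theta(\mu^n-\mu)_{\theta_0},Z}|\big)+\tfrac12|\hat\theta_{n,\varepsilon}-\theta_0|^2\,\big(\textstyle\sup_{\theta\in\Theta}\norm{\innerp{\nabla^2_\theta\mu_\theta,Z}}_F+\sup_{\theta\in\Theta}\norm{\innerp{\nabla^2_\theta(\mu^n-\mu)_\theta,Z}}_F\big).
\end{equation*}
By the first coercive estimate of \Cref{prop:coersive}, $|\hat\theta_{n,\varepsilon}-\theta_0|\le c^{-1/2}\mathcal E^{1/2}$ and, with $D:=\operatorname{diam}\Theta$, $|\hat\theta_{n,\varepsilon}-\theta_0|^2\le D\,c^{-1/2}\mathcal E^{1/2}$. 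Dividing the energy inequality by $\varepsilon^2$, inserting these bounds, and applying Young's inequality ($ab\le\delta a^2+(4\delta)^{-1}b^2$) to each product of the form $(\varepsilon^{-1}\mathcal E^{1/2})\times(Z\text{-factor})$ and to $(\varepsilon^{-2}\mathcal E^{1/2})\times\norm{K^n-K}_{L^\infty}$ yields an inequality of the shape $\frac1{4\varepsilon^2}\mathcal E\le\frac{C_0\delta}{\varepsilon^2}\mathcal E+\frac{C(\delta)}{\varepsilon^2}\norm{K^n-K}_{L^\infty}^2+C(\delta)\,R$, where $R$ is the sum of the four squared quantities on the right of \eqref{ineq:EnergyForMomentConv3} and $C_0$ depends only on $c$. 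Choosing $\delta<1/(8C_0)$ absorbs the $\mathcal E$-term on the right into the left side, and then shrinking $c_1$ to normalize the remaining constants (and to cover the trivial case above) gives \eqref{ineq:EnergyForMomentConv3}; all constants are built from $c$ (\Cref{prop:coersive}), $\OPN{\K}$, $\MN{\mu_0}$, $D$ and $\sup_{\theta\in\Theta}\MN{\mu_\theta-\mu_0}$, hence depend only on $\K$, $\theta_0$, $\{\mu_\theta\}_{\theta\in\Theta}$ and $\Theta$.

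The main obstacle I anticipate is the bookkeeping of discretization errors: one must keep the $\varepsilon^{-2}\norm{K^n-K}_{L^\infty}^2$ contributions strictly separated from the $\varepsilon^{-2}\mathcal E$ contributions so that the latter remain absorbable, and — crucially — split off the quadratic Taylor remainder using $|\hat\theta_{n,\varepsilon}-\theta_0|^2\le D\,|\hat\theta_{n,\varepsilon}-\theta_0|\le D\,c^{-1/2}\mathcal E^{1/2}$ rather than $\le c^{-1}\mathcal E$, so that only a single power $\mathcal E^{1/2}$ appears and the Young step closes; otherwise the noise term would be of order $\mathcal E$ and could not be absorbed into the left-hand side.
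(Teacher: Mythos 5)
Your proof is correct and follows essentially the same route as the paper: derive an energy inequality from the maximality of $\hat{\theta}_{n,\varepsilon}$, use the coercive bounds of \Cref{prop:coersive} to control $\abs{\hat{\theta}_{n,\varepsilon}-\theta_0}$ and $\MN{\mu_{\hat{\theta}_{n,\varepsilon}}-\mu_{\theta_0}}$ by $\innerp*{\mu_{\hat{\theta}_{n,\varepsilon}}-\mu_{\theta_0},\K(\mu_{\hat{\theta}_{n,\varepsilon}}-\mu_{\theta_0})}^{1/2}$, Taylor-expand the noise term, and absorb via Young's inequality. The only differences are bookkeeping: you keep the fully discretized quadratic form $\innerp{\hat\mu^n-\mu_0^n,\K(\hat\mu^n-\mu_0^n)}$ on the left and pass to the undiscretized energy by a case split on $\norm{K^n-K}_{L^\infty}$, whereas the paper decomposes $\hat\mu^n-\mu_0=(\hat\mu-\mu_0)+(\hat\mu^n-\hat\mu)$ at the outset and estimates the resulting cross terms directly --- both yield the stated bound up to the (loosely tracked) constants.
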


      \begin{proof}
        An immediate consequence from $\hat{\theta}_{n,\varepsilon} \in \argmax \Phi_{n,\varepsilon}$ is that
        \begin{align*}
          0 
          &\leq \Phi_{n,\varepsilon} ( \hat{\theta}_{n,\varepsilon} ) 
            - \Phi_{n,\varepsilon} ( \theta_0 ) \\
          &= 
            \varepsilon \innerp*{ \mu_{ \hat{\theta}_{n,\varepsilon}}^n - \mu_{ \theta_0 }^n , Z } 
            - \frac12 \innerp*{ \mu_{ \hat{\theta}_{n,\varepsilon}}^n - \mu_{ \theta_0 } , \K ( \mu_{ \hat{\theta}_{n,\varepsilon}}^n - \mu_{ \theta_0 } ) }
            + \frac12 \innerp*{ \mu_{ \theta_0 }^n - \mu_{ \theta_0 } , \K ( \mu_{ \theta_0 }^n - \mu_{ \theta_0 } ) }
            \\
          &= \varepsilon \innerp*{ \mu_{ \hat{\theta}_{n,\varepsilon}}^n - \mu_{ \theta_0 }^n , Z } 
            - \frac12 \innerp*{ \mu_{ \hat{\theta}_{n,\varepsilon}} - \mu_{ \theta_0 } , \K ( \mu_{ \hat{\theta}_{n,\varepsilon}} - \mu_{ \theta_0 } ) }
            + \frac12 \innerp*{ \mu_{ \theta_0 }^n - \mu_{ \theta_0 } , \K ( \mu_{ \theta_0 }^n - \mu_{ \theta_0 } ) } \\
          &\quad 
            - \frac12 \innerp*{ \mu_{ \hat{\theta}_{n,\varepsilon} }^n - \mu_{ \hat{\theta}_{n,\varepsilon}} , \K ( \mu_{ \hat{\theta}_{n,\varepsilon} }^n - \mu_{ \hat{\theta}_{n,\varepsilon}} ) }
            - \innerp*{ \mu_{ \hat{\theta}_{n,\varepsilon} }^n - \mu_{ \hat{\theta}_{n,\varepsilon}} , \K ( \mu_{ \hat{\theta}_{n,\varepsilon} } - \mu_{ \theta_0 } ) },
        \end{align*}
        and so we obtain
        \begin{equation}\label{ineq:EnergyForMomentConv1}
          \begin{aligned}
            &\innerp*{ \mu_{ \hat{\theta}_{n,\varepsilon}} - \mu_{ \theta_0 } , \K ( \mu_{ \hat{\theta}_{n,\varepsilon}} - \mu_{ \theta_0 } ) } \\
            &\quad
              \leq 2 \varepsilon \innerp*{ \mu_{ \hat{\theta}_{n,\varepsilon}}^n - \mu_{ \theta_0 }^n , Z } 
              -2 \innerp*{ \mu_{ \hat{\theta}_{n,\varepsilon} }^n - \mu_{ \hat{\theta}_{n,\varepsilon}} , \K ( \mu_{ \hat{\theta}_{n,\varepsilon} } - \mu_{ \theta_0 } ) } \\
            &\qquad
              + \innerp*{ \mu_{ \theta_0 }^n - \mu_{ \theta_0 } , \K ( \mu_{ \theta_0 }^n - \mu_{ \theta_0 } ) }
              - \innerp*{ \mu_{ \hat{\theta}_{n,\varepsilon} }^n - \mu_{ \hat{\theta}_{n,\varepsilon}} , \K ( \mu_{ \hat{\theta}_{n,\varepsilon} }^n - \mu_{ \hat{\theta}_{n,\varepsilon}} ) }.
          \end{aligned}
        \end{equation}
        in order to simplify this energy inequality,
        by using Young's inequality for products and \Cref{prop:coersive},
        we estimate from above the second term and the last two terms in \Cref{ineq:EnergyForMomentConv1} as follows:
        \begin{align*}
          &\abs{ \innerp*{ \mu_{ \hat{\theta}_{n,\varepsilon} }^n - \mu_{ \hat{\theta}_{n,\varepsilon}} , \K ( \mu_{ \hat{\theta}_{n,\varepsilon} } - \mu_{ \theta_0 } ) } } 
            \leq 
              \MN{ \mu_{ \hat{\theta}_{n,\varepsilon} } - \mu_{ \theta_0 } }
              \SN{ \K ( \mu_{ \hat{\theta}_{n,\varepsilon} }^n - \mu_{ \hat{\theta}_{n,\varepsilon}} ) } \\
          &\qquad
            \leq
              \frac{c}8 \MN{ \mu_{ \hat{\theta}_{n,\varepsilon} } - \mu_{ \theta_0 } }^2
              + \frac2c \SN{ \K ( \mu_{ \hat{\theta}_{n,\varepsilon} }^n - \mu_{ \hat{\theta}_{n,\varepsilon}} ) }^2 \\
          &\qquad
            \leq
              \frac18 \innerp*{ \mu_{ \hat{\theta}_{n,\varepsilon}} - \mu_{ \theta_0 } , \K ( \mu_{ \hat{\theta}_{n,\varepsilon}} - \mu_{ \theta_0 } ) }
              + \frac4c \norm{K^n - K}_{L^\infty}^2 
              \CkThMN{\mu_\bullet}^2,
        \end{align*}
        and
        \begin{align*}
          &\innerp*{ \mu_{ \theta_0 }^n - \mu_{ \theta_0 } , \K ( \mu_{ \theta_0 }^n - \mu_{ \theta_0 } ) }
            - \innerp*{ \mu_{ \hat{\theta}_{n,\varepsilon} }^n - \mu_{ \hat{\theta}_{n,\varepsilon}} , \K ( \mu_{ \hat{\theta}_{n,\varepsilon} }^n - \mu_{ \hat{\theta}_{n,\varepsilon}} ) } \\
          &\qquad
            = \innerp*{ \mu_{ \theta_0 }^n - \mu_{ \hat{\theta}_{n,\varepsilon} }^n - \mu_{ \theta_0 } + \mu_{ \hat{\theta}_{n,\varepsilon}}, 
            \K ( \mu_{ \theta_0 }^n - \mu_{ \theta_0 } ) 
            - \K ( \mu_{ \hat{\theta}_{n,\varepsilon} }^n - \mu_{ \hat{\theta}_{n,\varepsilon}} ) } \\
          &\qquad
            \leq 4 
              \MN{ \mu_{ \hat{\theta}_{n,\varepsilon} } - \mu_{ \theta_0 } }
              \sup_{\theta\in\Theta} \SN{ \K (\mu_{ \theta }^n - \mu_{ \theta } ) }\\
          &\qquad
            \leq 
              \frac14 \innerp*{ \mu_{ \hat{\theta}_{n,\varepsilon}} - \mu_{ \theta_0 } , \K ( \mu_{ \hat{\theta}_{n,\varepsilon}} - \mu_{ \theta_0 } ) }
              + \frac8c \norm{K^n - K}_{L^\infty}^2 
              \CkThMN{\mu_\bullet}^2,
        \end{align*}
        where the constant $c>0$ is the same as in \Cref{prop:coersive}.
        Combining these inequalities,
        the energy inequality \Cref{ineq:EnergyForMomentConv1} becomes
        \begin{equation}\label{ineq:EnergyForMomentConv2}
          \begin{aligned}
            &\innerp*{ \mu_{ \hat{\theta}_{n,\varepsilon}} - \mu_{ \theta_0 } , \K ( \mu_{ \hat{\theta}_{n,\varepsilon}} - \mu_{ \theta_0 } ) } 
              \leq \frac{32}c \norm{K^n - K}_{L^\infty}^2 
                \CkThMN{\mu_\bullet}^2
                + 4 \varepsilon \innerp*{ \mu_{ \hat{\theta}_{n,\varepsilon}}^n - \mu_{ \theta_0 }^n , Z }.
          \end{aligned}
        \end{equation}
        Our next aim is to thin out $\hat{\theta}_{n,\varepsilon}$ from the right-hand side of this energy inequality. 
        It follows from Taylor's formula and Young's inequality that
        \begin{equation}
          \begin{aligned}
              &\varepsilon^{-1} \innerp*{ \mu_{ \hat{\theta}_{n,\varepsilon}} - \mu_{ \theta_0 } , Z } \\
            &\quad = 
              \varepsilon^{-1} ( \hat{\theta}_{n,\varepsilon} -\theta_0)^\transp 
              \innerp*{ \grad_\theta \mu_{ \theta_0 } , Z }
            + 
              \varepsilon^{-1} ( \hat{\theta}_{n,\varepsilon} -\theta_0)^\transp 
              \int_0^1 \innerp*{ \nabla_\theta^2 \mu_{ u \hat{\theta}_{n,\varepsilon} + (1-u) \theta_0 } , Z } \dd{u} ( \hat{\theta}_{n,\varepsilon} -\theta_0) \\
            &\quad \leq 
              \frac{c}{16\varepsilon^2} \abs{ \hat{\theta}_{n,\varepsilon} - \theta_0 }^2 
              + \frac8c \abs{ \innerp*{ \grad_\theta \mu_{ \theta_0 } , Z } }^2 
              + \frac8c \sup_{\theta\in\Theta} \norm{ \innerp*{ \nabla_\theta^2 \mu_\theta , Z } }_F^2 \sup_{\theta\in\Theta} \abs{ \theta - \theta_0 }^2, 
          \end{aligned}
          \label{eq:ineq:EnergyForMomentConv3}
        \end{equation}
        and it also follows that
        \begin{equation*}
          \begin{aligned}
              & \varepsilon^{-1} \innerp*{ \mu_{ \hat{\theta}_{n,\varepsilon}}^n - \mu_{ \theta_0 }^n - \mu_{ \hat{\theta}_{n,\varepsilon}} + \mu_{ \theta_0 } , Z } \\
              & \quad = \varepsilon^{-1} ( \hat{\theta}_{n,\varepsilon} -\theta_0)^\transp 
              \innerp*{ \grad_\theta \mu_{ \theta_0 }^n - \grad_\theta \mu_{ \theta_0 } , Z } \\
            &\qquad + 
              \varepsilon^{-1} ( \hat{\theta}_{n,\varepsilon} -\theta_0)^\transp 
              \int_0^1 \innerp*{ \nabla_\theta^2 ( \mu^n - \mu )_{ u \theta + (1-u) \theta_0 } , Z } \dd{u} 
              ( \hat{\theta}_{n,\varepsilon} -\theta_0)\\
            & \quad \leq 
              \frac{c}{16\varepsilon^2} \abs{ \hat{\theta}_{n,\varepsilon} - \theta_0 }^2
              + \frac8c \abs{ \innerp*{ \grad_\theta ( \mu^n - \mu )_{ \theta_0 } , Z } }^2 
              + \frac8c \sup_{\theta\in\Theta} \norm{ \innerp*{ \nabla_\theta^2 ( \mu^n - \mu )_\theta , Z } }_F^2 
              \sup_{\theta\in\Theta} \abs{ \theta - \theta_0 }^2.
          \end{aligned}
        \end{equation*}
        Applying \Cref{prop:coersive} to these inequalities properly again,
        we can take away $\hat{\theta}_{n,\varepsilon}$ from \Cref{ineq:EnergyForMomentConv2}
        as mentioned above
        and finally obtain the following energy inequality 
        which provides us a mount of benefits later:
        \begin{equation*}
          \begin{aligned}
            &
              \frac{c}{64\varepsilon^2}
              \innerp*{ \mu_{ \hat{\theta}_{n,\varepsilon}} - \mu_{ \theta_0 } , \K ( \mu_{ \hat{\theta}_{n,\varepsilon}} - \mu_{ \theta_0 } ) } \\
            &\leq 
              \frac1{\varepsilon^2}
              \norm{K^n - K}_{L^\infty}^2 
              \CkThMN{\mu_\bullet}^2 \\
            &\quad + 
              \abs{ \innerp*{ \grad_\theta \mu_{ \theta_0 } , Z } }^2 
              + \sup_{\theta\in\Theta} \norm{ \innerp*{ \nabla_\theta^2 \mu_\theta , Z } }_F^2 \sup_{\theta\in\Theta} \abs{ \theta - \theta_0 }^2 \\
            &\quad 
              + \abs{ \innerp*{ \grad_\theta ( \mu^n - \mu )_{ \theta_0 } , Z } }^2
              + \sup_{\theta\in\Theta} \norm{ \innerp*{ \nabla_\theta^2 ( \mu^n - \mu )_\theta , Z } }_F^2 
              \sup_{\theta\in\Theta} \abs{ \theta - \theta_0 }^2.
          \end{aligned}
        \end{equation*}
        This completes the proof.
      \end{proof}

      \begin{remark}
        In this proof, we obtain a useful inequality \Cref{eq:ineq:EnergyForMomentConv3}
        as a by-product. 
        Although it is futile without next \Cref{prop:MomentConvergence},
        it plays an important role in the proof of \Cref{prop:ApplEnergyMethod}.
      \end{remark}

      In the following proposition and in the subsequent section, we additionally need \Cref{assump:SmoothBoundaryForMorrey,assump:MuIsC^3}, 
      since we use Morrey's inequality. 
    
      \begin{theorem}[Moment convergence]\label{prop:MomentConvergence}
        Under \Cref{assump:Theta,assump:cont,assump:isolated,assump:convex_Theta,assump:MuIsC^2,assump:Sigma_regular,assump:MuIsC^3,assump:SmoothBoundaryForMorrey,assump:convergent_rate},
        for any $q \in [1,\infty)$
        \begin{equation*}
          \begin{aligned}
            \varepsilon^{-1} ( \hat{\theta}_{n,\varepsilon} - \theta_0 )
            &\to \Sigma^{-1} \innerp*{ \grad_\theta \mu_{\theta_0}, Z } 
            & \quad &
            \text{in} \ \  L^q\qty\big((\Omega,\Prob);\mathbb{R}^p),
            \\
            \varepsilon^{-1} ( \mu_{ \hat{\theta}_{n,\varepsilon} } - \mu_{\theta_0} )
            &\to \grad_\theta \mu_{\theta_0}^\transp \Sigma^{-1} \innerp*{ \grad_\theta \mu_{\theta_0}, Z }
            & &
            \text{in} \ \  L^q\qty\big((\Omega,\Prob);\MM)
          \end{aligned}
        \end{equation*}
        as $\varepsilon\to0$ and $n\to\infty$.
      \end{theorem}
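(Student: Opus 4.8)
The plan is to upgrade the in-probability convergence already established in \Cref{thm:dis_parametric asymptotic normality} to $L^q$ convergence via Vitali's theorem: since convergence in probability is in hand, it suffices to show that $\{\,\varepsilon^{-q}\abs{\hat{\theta}_{n,\varepsilon}-\theta_0}^{q}\,\}_{\varepsilon>0,\,n\in\N}$ is uniformly integrable, and because $q\in[1,\infty)$ is arbitrary this reduces to the single uniform moment bound
\[
  \sup_{\varepsilon>0,\,n\in\N}\ \E\big[\,\varepsilon^{-q}\abs{\hat{\theta}_{n,\varepsilon}-\theta_0}^{q}\,\big]<\infty\qquad\text{for every }q\in[1,\infty).
\]
The starting point is the energy upper bound of \Cref{prop:EnergyForMomentConv} combined with the first coercive inequality of \Cref{prop:coersive}: the latter bounds $c\,\varepsilon^{-2}\abs{\hat{\theta}_{n,\varepsilon}-\theta_0}^2$ by the left-hand side of \Cref{ineq:EnergyForMomentConv3}, so that, raising to the power $q/2$ and taking expectations, everything reduces to controlling, uniformly in $(\varepsilon,n)$, the $L^{q}$ norm of each of the five terms on the right of \Cref{ineq:EnergyForMomentConv3}.

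Next I would estimate those five terms one by one. The first, $\varepsilon^{-2}\norm{K^n-K}_{L^\infty}^2$, is deterministic and tends to $0$ by \Cref{assump:convergent_rate}, hence is uniformly bounded. The term $\abs{\innerp*{\grad_\theta\mu_{\theta_0},Z}}$ is a fixed centered Gaussian vector in $\R^p$ whose law depends on neither $\varepsilon$ nor $n$, so it has finite moments of all orders. For the discretized linear term one uses $\innerp{\nu^n-\nu,Z}=\innerp{\nu,Z^n-Z}$ together with the contractivity of $(\cdot)^n$ on $\MM$ (see \Cref{apdx:operators}): $\innerp*{\grad_\theta(\mu^n-\mu)_{\theta_0},Z}$ is again Gaussian, with variance at most $4\OPN{\K}\MN{\grad_\theta\mu_{\theta_0}}^2$ uniformly in $n$, hence has uniformly bounded moments of all orders.

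The crux — and the step I expect to be the main obstacle — is bounding, uniformly in $(\varepsilon,n)$, the $q$-th moments of the Gaussian suprema $\sup_{\theta\in\Theta}\norm{\innerp*{\nabla_\theta^2\mu_\theta,Z}}_F$ and $\sup_{\theta\in\Theta}\norm{\innerp*{\nabla_\theta^2(\mu^n-\mu)_\theta,Z}}_F$. This is exactly where \Cref{assump:SmoothBoundaryForMorrey,assump:MuIsC^3} enter. Fixing $i,j$, consider the random field $f:\Theta\to\R$, $f(\theta)=\innerp*{\partial_{ij}\mu_\theta,Z}$; by \Cref{assump:MuIsC^3} it is differentiable with $\grad_\theta f(\theta)=\innerp*{\grad_\theta\partial_{ij}\mu_\theta,Z}$, and since $\Theta$ has smooth boundary, Morrey's inequality (see, e.g., \textcite{evans2010partial}) yields, for any $q>p$,
\[
  \E\Big[\sup_{\theta\in\Theta}\abs{f(\theta)}^{q}\Big]\ \lesssim\ \E\big[\,\SovN{f}^{q}\,\big]\ =\ \int_\Theta \E\big[\abs{f(\theta)}^q+\abs{\grad_\theta f(\theta)}^q\big]\,\dd\theta .
\]
Since $f(\theta)$ and $\grad_\theta f(\theta)$ are centered Gaussians whose variances are dominated by $\OPN{\K}\,\CkThMN[3]{\mu_\bullet}^2$ uniformly in $\theta$, each integrand is a finite Gaussian moment bounded independently of $\theta,\varepsilon,n$, so the supremum has finite $q$-th moment for every $q$; the identical computation, with $\mu_\bullet$ replaced by $(\mu^n-\mu)_\bullet$ whose $\CkThM[3]$-norm is at most $2\CkThMN[3]{\mu_\bullet}$ uniformly in $n$ (again by contractivity of $(\cdot)^n$), handles the discretized sup. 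Summing over $i,j$ completes the uniform moment bound, hence the uniform integrability, and therefore the first $L^q$ convergence.

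Finally, for the convergence of $\varepsilon^{-1}(\mu_{\hat{\theta}_{n,\varepsilon}}-\mu_{\theta_0})$ in $\MM$, I would write, by the mean value theorem,
\[
  \varepsilon^{-1}(\mu_{\hat{\theta}_{n,\varepsilon}}-\mu_{\theta_0})=\Big(\int_0^1 \grad_\theta\mu_{\theta_0+u(\hat{\theta}_{n,\varepsilon}-\theta_0)}^\transp\,\dd u\Big)\,\varepsilon^{-1}(\hat{\theta}_{n,\varepsilon}-\theta_0)=:A_{n,\varepsilon}B_{n,\varepsilon},
\]
where the $\MM^p$-valued factor $A_{n,\varepsilon}$ has operator norm (as a map $\R^p\to\MM$) at most $\CkThMN[1]{\mu_\bullet}$ and converges entrywise in $\MM$ to $A:=\grad_\theta\mu_{\theta_0}^\transp$ almost surely, by the consistency $\hat{\theta}_{n,\varepsilon}\asconv\theta_0$ (\Cref{thm:dis_parametric_consistency}) and the uniform continuity of $\grad_\theta\mu_\bullet$ (\Cref{assump:MuIsC^2}). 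Decomposing $A_{n,\varepsilon}B_{n,\varepsilon}-AB=(A_{n,\varepsilon}-A)B_{n,\varepsilon}+A(B_{n,\varepsilon}-B)$ and applying H\"older's inequality together with the uniform $L^{r}$-boundedness of $\{B_{n,\varepsilon}\}$ just proved (for all $r$), the dominated convergence theorem for the bounded sequence $\OPN{A_{n,\varepsilon}-A}\to 0$ in probability, and the $L^q$ convergence $B_{n,\varepsilon}\to B:=\Sigma^{-1}\innerp*{\grad_\theta\mu_{\theta_0},Z}$ from the first part, one upgrades this to $L^q$ convergence of $\varepsilon^{-1}(\mu_{\hat{\theta}_{n,\varepsilon}}-\mu_{\theta_0})$ in $\MM$ to $\grad_\theta\mu_{\theta_0}^\transp\Sigma^{-1}\innerp*{\grad_\theta\mu_{\theta_0},Z}$, as claimed.
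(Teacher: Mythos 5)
Your proposal is correct and follows essentially the same route as the paper: both rest on the energy upper bound of \Cref{prop:EnergyForMomentConv} combined with the coercivity of \Cref{prop:coersive}, control the two Gaussian suprema via Morrey's inequality (\Cref{prop:Morrey}) under \Cref{assump:MuIsC^3,assump:SmoothBoundaryForMorrey}, and then upgrade the convergence in probability from \Cref{thm:dis_parametric asymptotic normality} to $L^q$ convergence. The only differences are cosmetic: you invoke Vitali's theorem via uniform moment bounds where the paper uses a generalized dominated convergence argument with an $L^q$-convergent dominating sequence, and you derive the measure-valued convergence by a mean-value factorization rather than from the second coercive inequality; both variants are valid.
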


      \begin{proof}
        From \Cref{prop:EnergyForMomentConv,prop:coersive}, 
        we have the right-hand side of \Cref{ineq:EnergyForMomentConv3} as a dominating function of $( \hat{\theta}_{n,\varepsilon} - \theta_0 )/\varepsilon$ (or $( \mu_{ \hat{\theta}_{n,\varepsilon} } - \mu_{\theta_0} ) / \varepsilon$).
        To use the dominated convergence theorem later, we shall show that the dominating function
        \begin{equation*}
          \begin{aligned}
            &
              \frac1{\varepsilon^2}
              \norm{K^n - K}_{L^\infty}^2 
              + \abs{ \innerp*{ \grad_\theta \mu_{ \theta_0 } , Z } }^2 
              + \sup_{\theta\in\Theta} \norm{ \innerp*{ \nabla_\theta^2 \mu_\theta , Z } }_F^2 \\
            & \quad
              + \abs{ \innerp*{ \grad_\theta ( \mu^n - \mu )_{ \theta_0 } , Z } }^2
              + \sup_{\theta\in\Theta} \norm{ \innerp*{ \nabla_\theta^2 ( \mu^n - \mu )_\theta , Z } }_F^2, \label{D-func}
          \end{aligned}
        \end{equation*}
        converges in $L^q(\Omega,\Prob)$ as $\varepsilon\to0$ and $n\to\infty$.
        
        The first term clearly converges to $0$ by \Cref{assump:convergent_rate}. 
        From \Cref{prop:Morrey}, 
        \begin{align*}
          \E \CkN[2]{ \innerp*{ \mu_\bullet , Z }}^{2m}
          &\leq C \, \E \WkqN[3,2m]{ \innerp*{ \mu_\bullet , Z } }^{2m}
          \leq C \OPN{\K}^m \CkThMN[3]{\mu_\bullet}^{2m}, \\
          \E \CkN[2]{ \innerp*{ \mu_\bullet^n - \mu_\bullet , Z }}^{2m}
          &\leq C \, \E \WkqN[3,2m]{ \innerp*{ \mu_\bullet^n - \mu_\bullet , Z } }^{2m} \\
          &\leq C \norm{K^n-K}_{L^\infty}^m \CkThMN[3]{\mu_\bullet}^{2m}
        \end{align*}
        for some appropriate constants $C$. Therefore we see that the second and the third terms in \eqref{D-func} have any order of moments, 
        and the fourth and the fifth terms converge to 0 in $L^2(\Omega,\Prob)$.
        
        Hence, by the dominated convergence theorem and \Cref{thm:dis_parametric asymptotic normality}, 
        the desired convergence for $( \hat{\theta}_{n,\varepsilon} - \theta_0 )/\varepsilon$ and  $( \mu_{ \hat{\theta}_{n,\varepsilon} } - \mu_{\theta_0} ) / \varepsilon$ hold true.
      \end{proof}

    \subsubsection{Model Selection}\label{subsec:ModelSelection}

      This section considers the model selection for the parameterized mean function of a Gaussian process and constructs AIC-type information criteria. That is, we use our contrast function (quasi-log likelihood) as a substitution of the log-likelihood ratio and construct the bias-adjusted estimator for its limit function. It is often called ``contrast-based information criteria". 
      \Cref{thm:QGAIC} enables us to consider that our construction of new information criteria is legitimate. 
      
      The way to prove \Cref{thm:QGAIC} in this section is based on the energy inequality which is elaborated in the preceding section, and is more sophisticated and quicker than the usual way (see  \Cref{sec:AnotherProofModelSelection} to compare the standard argument for the proof of \Cref{thm:QGAIC}).
      The fact that the proof in this section is brief and simple strengthens how the energy method is powerful and elegant.
      
      \begin{theorem}[Bias for contrast function]\label{thm:QGAIC}
        Suppose \Cref{assump:Theta,assump:cont,assump:isolated,assump:convex_Theta,assump:Sigma_regular,assump:MuIsC^3,assump:SmoothBoundaryForMorrey,assump:convergent_rate,assump:MuIsC^2} and that 
        \begin{equation}\label{eq:ConvergenceRateForAIC}
          \varepsilon^{-2}\|K^n -K\|_{L^\infty}\to 0,\quad\varepsilon\to 0, \  n\to \infty.
        \end{equation}
        Then it follows that 
        \begin{equation*}
          \E\left[\Phi_{n,\varepsilon}(\hat{\theta}_{n,\varepsilon}) 
          - \Phi(\hat{\theta}_{n,\varepsilon})\right] 
          = p\varepsilon^2 +  o(\varepsilon^2),\quad\varepsilon\to 0, \  n\to\infty.
        \end{equation*}
      \end{theorem}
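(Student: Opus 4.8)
The plan is to exploit the algebraic form of the two contrast functions to collapse the bias into a single covariance‑type quantity, and then to evaluate that quantity by means of the moment convergence already established, whose proof has absorbed the analytic work through the energy inequality.

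First I would record the identity obtained by substituting $X^\varepsilon=h_0+\varepsilon Z$ and $h_0=\K\mu_{\theta_0}$ into the definitions of $\Phi_{n,\varepsilon}$ and $\Phi$: for every $\theta\in\Theta$,
\[
  \Phi_{n,\varepsilon}(\theta)-\Phi(\theta)
  = \varepsilon\innerp{\mu^n_\theta,Z}+D_n(\theta),\qquad
  D_n(\theta):=\innerp{\mu^n_\theta-\mu_\theta,h_0}-\tfrac12\innerp{\mu_\theta,(\K^n-\K)\mu_\theta}.
\]
Evaluating at $\hat\theta_{n,\varepsilon}$ and taking expectations, the theorem reduces to the two claims $\varepsilon\,\E\innerp{\mu^n_{\hat\theta_{n,\varepsilon}},Z}=p\varepsilon^2+o(\varepsilon^2)$ and $\E[D_n(\hat\theta_{n,\varepsilon})]=o(\varepsilon^2)$. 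The latter is easy: using $\innerp{\nu^n,g}=\innerp{\nu,g^n}$, the estimate $\|(\K\nu)^n-\K\nu\|_{L^\infty}\le 2\|K^n-K\|_{L^\infty}\MN{\nu}$ (cf.\ the derivation of \Cref{eq:NormConvergenceOfMeanByCovSmoothness}), and $\abs{\innerp{\mu_\theta,(\K^n-\K)\mu_\theta}}\le\|K^n-K\|_{L^\infty}\MN{\mu_\theta}^2$, one obtains $\sup_{\theta\in\Theta}\abs{D_n(\theta)}\le C\|K^n-K\|_{L^\infty}$ with $C$ depending only on $\MN{\mu_{\theta_0}}$ and $\CkThMN[2]{\mu_\bullet}$; hence $\abs{\E[D_n(\hat\theta_{n,\varepsilon})]}\le C\|K^n-K\|_{L^\infty}=o(\varepsilon^2)$ by the hypothesis \Cref{eq:ConvergenceRateForAIC}. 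This is the only place where \Cref{eq:ConvergenceRateForAIC}, rather than the weaker \Cref{assump:convergent_rate}, is used.

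For the stochastic term, since $\mu^n_{\theta_0}$ is non‑random and $Z$ is centered, $\E\innerp{\mu^n_{\theta_0},Z}=0$, so $\E\innerp{\mu^n_{\hat\theta_{n,\varepsilon}},Z}=\E\innerp{\mu^n_{\hat\theta_{n,\varepsilon}}-\mu^n_{\theta_0},Z}$, and a second‑order Taylor expansion in $\theta$ along the segment $[\theta_0,\hat\theta_{n,\varepsilon}]\subset\Theta$ (convexity, \Cref{assump:convex_Theta}) gives
\[
  \innerp{\mu^n_{\hat\theta_{n,\varepsilon}}-\mu^n_{\theta_0},Z}
  =(\hat\theta_{n,\varepsilon}-\theta_0)^\transp\innerp{\grad_\theta\mu^n_{\theta_0},Z}+R_{n,\varepsilon},\qquad
  \abs{R_{n,\varepsilon}}\le\tfrac12\abs{\hat\theta_{n,\varepsilon}-\theta_0}^2\sup_{\theta\in\Theta}\norm{\innerp{\nabla^2_\theta\mu^n_\theta,Z}}_F.
\]
Splitting $\innerp{\grad_\theta\mu^n_{\theta_0},Z}=\innerp{\grad_\theta\mu_{\theta_0},Z}+\innerp{\grad_\theta(\mu^n-\mu)_{\theta_0},Z}$, the leading contribution equals $\varepsilon^2\,\E\big[(\varepsilon^{-1}(\hat\theta_{n,\varepsilon}-\theta_0))^\transp\innerp{\grad_\theta\mu_{\theta_0},Z}\big]$; by \Cref{prop:MomentConvergence} the bracketed factor converges in $L^2$ to $\Sigma^{-1}\innerp{\grad_\theta\mu_{\theta_0},Z}$, and since $\E[\innerp{\grad_\theta\mu_{\theta_0},Z}\innerp{\grad_\theta\mu_{\theta_0},Z}^\transp]=\Sigma$ this expectation tends to $\operatorname{tr}(\Sigma^{-1}\Sigma)=p$, i.e.\ the leading contribution is $p\varepsilon^2+o(\varepsilon^2)$. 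The discretization correction is $O(\varepsilon^2\|K^n-K\|_{L^\infty}^{1/2})=o(\varepsilon^2)$, by Cauchy--Schwarz, the $L^2$‑boundedness of $\varepsilon^{-1}(\hat\theta_{n,\varepsilon}-\theta_0)$, and $\E\norm{\innerp{\grad_\theta(\mu^n-\mu)_{\theta_0},Z}}^2=\sum_i\innerp{(\partial_i\mu_{\theta_0})^n-\partial_i\mu_{\theta_0},\K((\partial_i\mu_{\theta_0})^n-\partial_i\mu_{\theta_0})}=O(\|K^n-K\|_{L^\infty})$ (again via \Cref{eq:NormConvergenceOfMeanByCovSmoothness} and the symmetry of $\K$). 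Finally $\varepsilon\,\E\abs{R_{n,\varepsilon}}=O(\varepsilon^3)$: by the Gaussian/Morrey moment bounds used in the proof of \Cref{prop:MomentConvergence} together with the fact that $(\cdot)^n$ is a contraction on $\MM$, $\E\sup_{\theta\in\Theta}\norm{\innerp{\nabla^2_\theta\mu^n_\theta,Z}}_F^q$ is bounded uniformly in $n$ for every $q<\infty$, while $\E\abs{\hat\theta_{n,\varepsilon}-\theta_0}^4=O(\varepsilon^4)$ by \Cref{prop:MomentConvergence}, and Hölder's inequality closes the estimate. Collecting these, $\varepsilon\,\E\innerp{\mu^n_{\hat\theta_{n,\varepsilon}},Z}=p\varepsilon^2+o(\varepsilon^2)$, which combined with the bound on $\E[D_n(\hat\theta_{n,\varepsilon})]$ proves the theorem.

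The one genuinely nontrivial point is the passage $\varepsilon^{-1}\,\E\big[(\hat\theta_{n,\varepsilon}-\theta_0)^\transp\innerp{\grad_\theta\mu_{\theta_0},Z}\big]\to p$: this is an assertion about convergence of first moments, equivalently about the uniform integrability of $\varepsilon^{-2}\abs{\hat\theta_{n,\varepsilon}-\theta_0}^2$, not merely about convergence in distribution (\Cref{thm:dis_parametric asymptotic normality}), and it requires the $L^2$ (in fact $L^q$) moment convergence of \Cref{prop:MomentConvergence}. That is precisely what the energy upper bound \Cref{prop:EnergyForMomentConv} was built to supply, so the difficulty has been front‑loaded; the present argument is the short downstream computation the section advertises, the remainder being bookkeeping of the $(\cdot)^n$‑discretization errors, all dominated by $\|K^n-K\|_{L^\infty}$ and hence negligible at the order $o(\varepsilon^2)$.
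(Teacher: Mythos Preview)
Your proof is correct and mirrors the paper's short argument in \Cref{subsec:ModelSelection}: the same decomposition $\Phi_{n,\varepsilon}-\Phi=\varepsilon\innerp{\mu^n_\theta,Z}+D_n(\theta)$, the same disposal of $D_n$ via \Cref{eq:ConvergenceRateForAIC}, and the same reliance on \Cref{prop:MomentConvergence} for the stochastic term. The one organizational difference is in how the limit $\varepsilon^{-1}\E\innerp{\mu^n_{\hat\theta_{n,\varepsilon}},Z}\to p$ is obtained. The paper isolates this as \Cref{prop:ApplEnergyMethod}, whose proof uses the specific dominating function \eqref{eq:ineq:EnergyForMomentConv3} (the byproduct of the energy bound) together with the $L^q$ convergence of $(\mu_{\hat\theta_{n,\varepsilon}}-\mu_{\theta_0})/\varepsilon$ in $\MM$; you instead inline this via a second-order Taylor expansion in $\theta$, the $L^2$ convergence of $\varepsilon^{-1}(\hat\theta_{n,\varepsilon}-\theta_0)$, and a H\"older/Morrey bound on the remainder. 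Your route is slightly more direct in that it avoids invoking the byproduct inequality and works only with the moment convergence of the finite-dimensional parameter, at the cost of needing the uniform-in-$n$ Morrey bound on $\sup_\theta\norm{\innerp{\nabla^2_\theta\mu^n_\theta,Z}}_F$ (which, as you note, follows from the contraction property of $(\cdot)^n$ and the same estimates as in the proof of \Cref{prop:MomentConvergence}). Both arguments are short precisely because the hard work has been absorbed into \Cref{prop:EnergyForMomentConv} and \Cref{prop:MomentConvergence}, a point you identify correctly.
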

          
      \begin{remark}
        The additional assumption \Cref{eq:ConvergenceRateForAIC}, which is stronger than \Cref{assump:convergent_rate},
        will be needed to estimate the second and the third terms of the right-hand side of \Cref{eq:AnotherProofQGAIC1},
        and also needed to obtain \Cref{eq:AIC_D2} in the proof of \Cref{thm:QGAIC}.
        We commonly use this assumption in the two different ways of the proof, one is the following and the other is given in \Cref{sec:AnotherProofModelSelection}.
      \end{remark}
      
      \begin{lemma}\label{prop:ApplEnergyMethod}
      Under \Cref{assump:Theta,assump:cont,assump:isolated,assump:convex_Theta,assump:Sigma_regular,assump:MuIsC^3,assump:SmoothBoundaryForMorrey,assump:convergent_rate,assump:MuIsC^2},
      \begin{equation*}
        \E \qty[ \frac{1}{\varepsilon} \innerp*{ \mu_{ \hat{\theta}_{n,\varepsilon}}, Z } ]
        \to p
      \end{equation*}
      as $\varepsilon\to0$ and $n\to\infty$.
      \end{lemma}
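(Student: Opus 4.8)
The strategy is to reduce the statement to the moment convergence of \Cref{prop:MomentConvergence}. First I would split
\[
  \frac1\varepsilon \innerp{ \mu_{\hat{\theta}_{n,\varepsilon}}, Z }
  = \frac1\varepsilon \innerp{ \mu_{\theta_0}, Z }
  + \frac1\varepsilon \innerp{ \mu_{\hat{\theta}_{n,\varepsilon}} - \mu_{\theta_0}, Z }.
\]
Since $Z\sim N(0,\K)$ is centered, $\innerp{\mu_{\theta_0}, Z}$ is a centered real Gaussian, so $\E\bigl[\varepsilon^{-1}\innerp{\mu_{\theta_0}, Z}\bigr]=0$ for every $\varepsilon>0$; moreover both summands are integrable for fixed $\varepsilon,n$ because $\lvert\innerp{\nu, Z}\rvert\le\MN{\nu}\SN{Z}$ with $\SN{Z}\in L^1(\Omega,\Prob)$ by Fernique's theorem. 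Hence it suffices to show $\E\bigl[\varepsilon^{-1}\innerp{\mu_{\hat{\theta}_{n,\varepsilon}}-\mu_{\theta_0}, Z}\bigr]\to p$.

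For the second summand I would invoke \Cref{prop:MomentConvergence}, which applies since the present hypotheses contain all of its assumptions: it gives
\[
  \frac{\mu_{\hat{\theta}_{n,\varepsilon}}-\mu_{\theta_0}}{\varepsilon}
  \longrightarrow
  \grad_\theta\mu_{\theta_0}^\transp \Sigma^{-1} \innerp{ \grad_\theta\mu_{\theta_0}, Z }
  \quad\text{in } L^2\bigl((\Omega,\Prob);\MM\bigr)
\]
as $\varepsilon\to0$, $n\to\infty$. Combining this with $\SN{Z}\in L^2(\Omega,\Prob)$ and $\lvert\innerp{\nu, Z}\rvert\le\MN{\nu}\SN{Z}$, the Cauchy--Schwarz inequality yields
\[
  \E\Bigl\lvert \tfrac1\varepsilon\innerp{\mu_{\hat{\theta}_{n,\varepsilon}}-\mu_{\theta_0}, Z}
  - \innerp{\grad_\theta\mu_{\theta_0}^\transp\Sigma^{-1}\innerp{\grad_\theta\mu_{\theta_0}, Z}, Z} \Bigr\rvert \to 0,
\]
so the expectations converge to $\E\bigl[\innerp{\grad_\theta\mu_{\theta_0}^\transp\Sigma^{-1}\innerp{\grad_\theta\mu_{\theta_0}, Z}, Z}\bigr]$.

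It then remains to evaluate this constant. Putting $\xi\coloneqq\innerp{\grad_\theta\mu_{\theta_0}, Z}\in\R^p$, bilinearity of the pairing gives $\innerp{\grad_\theta\mu_{\theta_0}^\transp\Sigma^{-1}\xi, Z}=(\Sigma^{-1}\xi)^\transp\innerp{\grad_\theta\mu_{\theta_0}, Z}=\xi^\transp\Sigma^{-1}\xi$. As recorded in the proof of \Cref{thm:parametric asymptotic normality}, $\xi$ is a centered Gaussian vector on $\R^p$ with covariance matrix $\Sigma$, whence
\[
  \E[\xi^\transp\Sigma^{-1}\xi]
  = \E\bigl[\operatorname{tr}(\Sigma^{-1}\xi\xi^\transp)\bigr]
  = \operatorname{tr}\bigl(\Sigma^{-1}\E[\xi\xi^\transp]\bigr)
  = \operatorname{tr}(\Sigma^{-1}\Sigma)
  = \operatorname{tr}(I_p) = p,
\]
and collecting the three pieces gives $\E\bigl[\varepsilon^{-1}\innerp{\mu_{\hat{\theta}_{n,\varepsilon}}, Z}\bigr]\to 0+p=p$.

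The main obstacle is the interchange of limit and expectation in the second step: convergence in probability of $\varepsilon^{-1}\innerp{\mu_{\hat{\theta}_{n,\varepsilon}}-\mu_{\theta_0}, Z}$ follows already from \Cref{thm:dis_parametric asymptotic normality} and a Taylor expansion, but convergence of the \emph{expectations} genuinely needs the $L^q$-control of $\varepsilon^{-1}(\hat{\theta}_{n,\varepsilon}-\theta_0)$ provided by \Cref{prop:MomentConvergence}, whose proof rests on the energy upper bound \Cref{prop:EnergyForMomentConv} and, via the by-product inequality \eqref{eq:ineq:EnergyForMomentConv3}, on the coercive property \Cref{prop:coersive}. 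A self-contained alternative is to dominate $\varepsilon^{-1}\innerp{\mu_{\hat{\theta}_{n,\varepsilon}}-\mu_{\theta_0}, Z}$ from above by the right-hand side of \eqref{eq:ineq:EnergyForMomentConv3} and to check that this bound is uniformly integrable, using the energy estimate together with the Morrey-type moment bounds on $\sup_{\theta\in\Theta}\norm{\innerp{\nabla_\theta^2\mu_\theta, Z}}_F$ established in the proof of \Cref{prop:MomentConvergence}.
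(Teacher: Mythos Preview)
Your proposal is correct and follows essentially the same route as the paper: split off $\innerp{\mu_{\theta_0},Z}$, reduce to the limit of $\varepsilon^{-1}\innerp{\mu_{\hat\theta_{n,\varepsilon}}-\mu_{\theta_0},Z}$ via \Cref{prop:MomentConvergence}, and evaluate the limiting constant as $\tr(\Sigma^{-1}\Sigma)=p$. The only difference is in how the interchange of limit and expectation is justified: the paper invokes the by-product inequality \eqref{eq:ineq:EnergyForMomentConv3} from the proof of \Cref{prop:EnergyForMomentConv} as an $L^1$-convergent dominating bound (exactly your ``self-contained alternative''), whereas your primary argument passes directly from the $L^2\bigl((\Omega,\Prob);\MM\bigr)$-convergence of \Cref{prop:MomentConvergence} to $L^1$-convergence of the pairing via $\lvert\innerp{\nu,Z}\rvert\le\MN{\nu}\SN{Z}$ and Cauchy--Schwarz. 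Your route is slightly more direct and sidesteps re-opening the energy estimate, at the modest cost of appealing to Fernique's theorem for $\SN{Z}\in L^2$; the paper's route, by contrast, makes explicit why the remark after \Cref{prop:EnergyForMomentConv} singles out \eqref{eq:ineq:EnergyForMomentConv3} as useful here.
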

      
      \begin{proof}
        In the proof of \Cref{prop:EnergyForMomentConv},
        we have shown that
        \begin{equation*}
          \frac{c}{\varepsilon} \innerp*{ \mu_{ \hat{\theta}_{n,\varepsilon}} - \mu_{ \theta_0 } , Z } 
            \leq 
              \frac{1}{\varepsilon^2} \abs{ \hat{\theta}_{n,\varepsilon} - \theta_0 }^2 
              + \abs{ \innerp*{ \grad_\theta \mu_{ \theta_0 } , Z } }^2 
              + \sup_{\theta\in\Theta} \norm{ \innerp*{ \nabla_\theta^2 \mu_\theta , Z } }_F^2, 
        \end{equation*}
        where the constant $c>0$ 
        depends only on $\K$, $\theta_0$, $\{\mu_\theta\}_{\theta\in\Theta}$ and $\Theta$.
        From this and \Cref{prop:MomentConvergence},
        we obtain an $L^1$-convergent dominating function of $\varepsilon^{-1} \innerp{ \mu_{ \hat{\theta}_{n,\varepsilon}} - \mu_0, Z }$, and so by \Cref{prop:MomentConvergence} again we obtain that 
        \begin{align*}
          \E \qty[ \frac{1}{\varepsilon} \innerp*{ \mu_{ \hat{\theta}_{n,\varepsilon}} - \mu_0, Z } ]
          &\to \E \qty[ \innerp*{ \grad_\theta \mu_{\theta_0}^\transp \Sigma^{-1} \innerp*{ \grad_\theta \mu_{\theta_0}, Z }, Z} ] \\
          &= \E \qty[ \innerp*{ \grad_\theta \mu_{\theta_0}^\transp, Z} \Sigma^{-1} \innerp*{ \grad_\theta \mu_{\theta_0}, Z } ] \\
          &= \tr \qty( \Sigma^{-1} \Sigma ) = p
        \end{align*}
        as $\varepsilon\to0$ and $n\to\infty$. 
        Since $\E[\innerp{\mu_0,Z}]=0$, the conseequence follows.
      \end{proof}
            
      \begin{proof}[Proof of \Cref{thm:QGAIC}]
        From the definition of $\Phi_{n,\varepsilon}$ and $\Phi$,
        \begin{equation}\label{eq:AnotherProofQGAIC1}
          \Phi_{n,\varepsilon}(\theta) - \Phi(\theta)
          = \varepsilon \innerp*{ \mu_\theta^n, Z} - \frac12 \innerp*{\mu_\theta, (\K^n-\K) \mu_\theta} + \innerp*{\mu_{\theta_0}, \K (\mu_\theta^n - \mu_\theta)},
        \end{equation}
        and the second and the third term in the right-hand side is dominated by
        \begin{equation*}
          \norm{K^n-K}_{L^\infty} \CkThMN[1]{\mu_\bullet}^2
        \end{equation*}
        up to constants. 
        Moreover, as in the proof of \Cref{prop:MomentConvergence}, we have that 
        \begin{equation*}
          \E \CkN{ \innerp*{ \mu_\bullet^n - \mu_\bullet , Z }}
          \leq C \norm{K^n-K}_{L^\infty}^{1/2} \CkThMN[1]{\mu_\bullet}
        \end{equation*}
        with an appropriate constant $C$.
        Thus, by \Cref{eq:ConvergenceRateForAIC,prop:ApplEnergyMethod},
        \begin{align*}
          &\frac1{\varepsilon^2} \E \qty[ \Phi_{n,\varepsilon}(\hat{\theta}_{n,\varepsilon}) - \Phi(\hat{\theta}_{n,\varepsilon}) ] \\
          &\quad= \E \qty[ \frac1\varepsilon \innerp*{ \mu_{\hat{\theta}_{n,\varepsilon}}, Z} ] 
          + \frac1\varepsilon \E \qty[ \innerp*{ \mu_{\hat{\theta}_{n,\varepsilon}}^n - \mu_{\hat{\theta}_{n,\varepsilon}}, Z} ] \\
          &\qquad
          + \frac1{\varepsilon^2} \E \qty[
          - \frac12 \innerp*{\mu_{\hat{\theta}_{n,\varepsilon}}, (\K^n-\K) \mu_{\hat{\theta}_{n,\varepsilon}}} + \innerp*{\mu_{\theta_0}, \K (\mu_{\hat{\theta}_{n,\varepsilon}}^n - \mu_{\hat{\theta}_{n,\varepsilon}})} ],
        \end{align*}
        converges to $-p$ as $\varepsilon\to0$ and $n\to\infty$,
        and the proof is completed.    
      \end{proof}
      
      As the consequence of this section, with the aid of \Cref{thm:QGAIC}, we propose the following AIC-type information criteria.
      
      \begin{definition}
        The \emph{quasi-information criterion} for mean functions of  GPs (QGAIC) is defined as follows:
        \begin{equation*}
          \text{QGAIC} \coloneqq -2 \Phi_{n,\varepsilon}(\hat{\theta}_{n,\varepsilon}) + 2\varepsilon^2p.
        \end{equation*}
      \end{definition}
    
  \section{Examples and Numerical Results}\label{sec:Numerical}

    In this section, we present some simple examples of Gaussian processes and parameter settings for
    $\theta= (\theta_1,\theta_2,\dots,\theta_k)
    \in \R^k$. In each example, we also state what conditions are necessary for the example to satisfy \Cref{assump:Sigma_regular,assump:cont,assump:convex_Theta,assump:isolated,assump:Theta,assump:MuIsC^2,eq:t_order} with a Lipschitz continuity for $K$. In particular, \Cref{assump:Sigma_regular} is often important and difficult to confirm whether they are satisfied or not, therefore we will only review their implications in a few examples.
    
    In addition, in the case that the true centered Gaussian process $Z$ is Ornstein--Uhlenbeck (OU) process, we check the consistency and asymptotic normality of estimator by numerical simulation as the results of 
    \Cref{thm:dis_parametric_consistency,thm:dis_parametric asymptotic normality}. 
    In the numerical simulation, the parameter setting corresponds to 1 dimensional version of \cref{eq:linear_paramater} in Example \ref{ex:density}.

    \subsection{Examples}

      Firstly, we present two simple examples.

      \begin{example}[Linear model with Dirac's $\delta$ basis]\label{ex:Delta}
        In order to set the parametric model for the mean function, we put $\{\mu_\theta\}_{\theta\in\Theta}$ by using the Dirac measure as
        \begin{equation*}
            \mu_\theta\coloneqq \sum_{i=1}^p \theta_i\delta_{s_i},
            \quad 
            \theta=(\theta_1,\dots,\theta_p) \in \Theta,
        \end{equation*}
        where $s_i \in T$ ($i=1,\dots,p$) with $ s_1<s_2<\cdots<s_p$.
        Then, the parametric model $\{h_\theta\}_{\theta\in\Theta}$ and the contrast function $\Phi_{n,\varepsilon}(\theta)$ is given by
        \begin{equation*}
            h_\theta(t) 
                = \sum_{i=1}^p \theta_i K(s_i,t), \quad
            \Phi_{n,\varepsilon}(\theta) 
                = \sum_{i=1}^p \theta_i X^\varepsilon_{\ceil{s_i}}
                - \frac12 \sum_{i,j=1}^p \theta_i \theta_j K(\ceil{s_i},\ceil{s_j}),
        \end{equation*}
        where $\ceil{s_i} \coloneqq \min \{ t_j \,|\, t_j\geq s_i, j=1,\dots,n \}$.
      \end{example}

      \begin{remark}
          When $n=p$, this example has a relation to the kernel method.
          To explain more precisely, we consider the case in which  all the $s_i$ are appeared in the observation time $\{t_1,\dots,t_n\}$ (or continuous observation case).
          Then $\Phi_{n,\varepsilon}(\theta)$ is given by
          \begin{equation*}
              \Phi_{n,\varepsilon}(\theta) 
                = \sum_{i=1}^p \theta_i X^\varepsilon_{s_i}
                - \frac12 \sum_{i,j=1}^p \theta_i \theta_j K(s_i,s_j),
          \end{equation*}
          and our estimator is obtained as $\hat{\theta}_{n,\varepsilon}=\Sigma^{-1} b$ which amounts to
          \begin{equation*}
              \argmin_{\theta} \sum_{j=1}^p \abs{X^\varepsilon_{s_j} -\sum_{i=1}^p \theta_i K(s_i,s_j)}^2,
          \end{equation*}
          where $\Sigma$ is the matrix with $(i,j)$ entry $\Sigma_{ij}=K(s_i,s_j)$, and $b$ is the vector with $j$-th entry $b_j=X^\varepsilon_{s_j}$.
          Thus, under this particular situation, our estimator corresponds to the estimator obtained by the kernel method without penalty term,
          but unfortunately, the statistical situations and the asymptotic theories for them are quite different; one is the case in which $p$ is fixed and we estimate the mean function $h_{\theta_0}$ under $n\to\infty$, $\varepsilon\to0$, and the other is the case in which $\varepsilon$ is fixed and we interpolate the value $X^\varepsilon_t$ at unobserved time $t$ under $p=n\to\infty$. 
      \end{remark}

      \begin{example}[Measures with the densities]\label{ex:density}
        We put $\{\mu_\theta\}_{\theta\in\Theta}$ as
        \begin{equation*}
            \mu_\theta(\dd{t})\coloneqq f_\theta(t)\dd{t},
        \end{equation*}
        where $f_\theta\in \Ck[2]{\Theta;L^1(T)}$.
        Then,
        \begin{equation*}
            \begin{aligned}
                h_\theta(t) 
                &= \int^T_0 K(s,t) \, f_\theta (s) \dd{s}, \\
                \Phi_{n,\varepsilon} (\theta)
                &= \sum_{i=1}^n X^\varepsilon_{t_i} \int_{T_i} f_\theta(s) \dd{s} 
                - \frac{1}{2} \sum_{i,j=1}^n K(t_i,t_j) \int_{T_i} f_\theta(s) \dd{s} \int_{T_j} f_\theta(t) \dd{t}.
            \end{aligned}
        \end{equation*}
        and the covariance matrix of the limit distribution for $\varepsilon^{-1}(\hat{\theta}_{n,\varepsilon}-\theta_0)$ is given by the matrix $\Sigma$ with $(i,j)$ entry
        \begin{equation*}
          \Sigma_{ij} = \iint_{T^2} K(s,t) \,
          \pdv{f_{\theta_0}}{\theta_i}\qty(s) \, \pdv{f_{\theta_0}}{\theta_j}\qty(t) 
          \dd{s} \dd{t}.
        \end{equation*}
        In addition, if $f_\theta$ is of the form
        \begin{equation}\label{eq:linear_paramater}
          f_{\theta}(t) = \sum_{k=1}^p \theta_k f_{k}(t),
        \end{equation}
        for $f_k \in L^2(T)$ ($k=1,\dots,p$), then the $(i,j)$ entry of the limit covariance matrix $\Sigma$ is given by
        \begin{equation*}
          \Sigma_{ij} = \iint_{T^2} K(s,t) \, f_i(s) \, f_j(t) \dd{s} \dd{t},
        \end{equation*}
        and our estimator $\hat{\theta}_{n,\varepsilon}$ is given by $\hat{\theta}_{n,\varepsilon} = (\Sigma^n)^{-1} b$,
        where the vector $b$ and the matrix $\Sigma^n$ are given by
        \begin{equation*}
          b_k = \sum_{i=1}^n X^\varepsilon_{t_i}\int_{T_i} f_k(t) \dd{t}, \quad
          \Sigma^n_{k\ell} = \sum_{i,j=1}^n K(t_i,t_j) \int_{T_i}f_k(t) \dd{t} \int_{T_j}f_l(s) \dd{s}.
        \end{equation*}
        We shall use these expressions for the computation in our numerical experiment.
      \end{example}

      Next, we present two major examples of GPs.

      \begin{example}[Markovian GPs]\label{ex:Markov}\leavevmode \par
        \begin{itemize}
          \item Let $u,v:T\to\R$ be bounded functions, and suppose that $v$ is non-decreasing.
          \item $Z \sim N(0,\K)$, where $K(s,t)\coloneqq u(s)u(t)\min\{v(s), v(t)\}$.
        \end{itemize}
      \end{example} 

      \Cref{ex:Markov} contains various major Gaussian processes, including the following processes:
      \begin{itemize}
        \item \emph{Wiener process} : $K(s,t) = \min(s,t)$,
        \item \emph{Brownian bridge} : $K(s,t) = \min(s,t) - st = \min(s,t)(1- \max(s,t))$,
        \item \emph{OU process} : $K(s,t) = \sigma^2/2\eta
        \qty{ \exp(-\eta|s-t|) - \exp(-\eta(s+t)) }$, for $\eta, \sigma>0$.
      \end{itemize}

      \begin{example}[Fractional Brownian motion (fBm)]\label{ex:fractional}\leavevmode \par
        \begin{itemize}
          \item $Z\sim N(0,\K)$, where the covariance function is of the form
          \begin{equation*}
            K(s,t) = |s|^{2H} + |t|^{2H} - |t-s|^{2H}
          \end{equation*}
          for $0<H<1$.
        \end{itemize}
      \end{example}

      \begin{remark}
        The covariance function in \Cref{ex:fractional}
        is Lipschitz continuous on $T^2$, \Cref{assump:convergent_rate} can be replaced by simply  $n\varepsilon\to \infty$. The covariance functions in \Cref{ex:Markov} can be Lipschitz continuous: e.g., the covatiance functions of Wiener process, Brownian bridge and OU process are Lipschitz continuous.)
      \end{remark}

    \subsection{Numerical results}

      In this section, we shall observe consistency and asymptotic normality for the case where the Gaussian process Z on $[0,1]$ is OU process with $\eta=\frac{1}{2},\ \sigma=1$ in \Cref{ex:Markov}. We consider the 1-dimensional case of \labelcref{eq:linear_paramater} in \Cref{ex:density}. 
      We consider the following situation:
      \begin{itemize}
        \item Let $T=1,\;\;t_i = \frac{i}{n}$ (observation times).
        \item Let $K(s,t) = \exp(-\frac{1}{2}|s-t|) - \exp(-\frac{1}{2}(s+t))$.
        \item Let $\mu_{\theta}(\dd{s}) = \theta\cdot (6\sin(-7.9s)) \dd{s}$.
        \item Let $h_\theta(t) = \theta \int_0^1 K(s,t) \cdot 6\sin(-7.9s) \dd{s}$.
        \item Let $\theta_0 = -4.0$ (true parameter).
        \item $X^\varepsilon\sim N(h_{\theta_0}, \varepsilon^2 \K)\ \ (\varepsilon>0)$.
        \item $\Sigma = 36 \int_0^1 \int_0^1 K(s,t) \sin(-7.9s) \sin(-7.9t) \dd{s} \dd{t} \approx 0.18077$.
        \item $\varepsilon^{-1}(\hat{\theta}_{n\varepsilon}-\theta_0)\dconv N(0,\sigma^2)$, $\sigma\approx2.3520$.
      \end{itemize}
      We observe a discrete sample $X^{\varepsilon,n}\coloneqq (X^\varepsilon_{t_1},X^\varepsilon_{t_2},\dots,X^\varepsilon_{t_n})$ from a one path of $X^\varepsilon$ on $[0,1]$.
      
      We compute the mean and the standard deviation of $\hat{\theta}_{n,\varepsilon}$ with 1000 iterations (see \Cref{table:WeakConsistncy}) under the following four cases: $(n, \varepsilon)=(100,0.1), (1000,0.1), (100,0.01)$ and $(1000,0.01)$.
      Samples for $\hat{h}_{n,\varepsilon}$ are shown in \Cref{fig:consistency}, 
      and we present
      QQ-plots and histgrams with respect to $\varepsilon^{-1} (\hat{\theta}_{n,\varepsilon} - \theta_0)$ with 1000 iterations (see \Cref{fig:AsymototicNormality,fig:QQPlot}).
      These numerical experiments, in particular the case $(n, \varepsilon)=(100, 0.01)$, indicate that if \Cref{assump:convergent_rate}, in particular $n\varepsilon\to0$ with a Lipschiz continuity for $K$, is not satisfied, 
      then
      the asymptotic normality for $\hat{\theta}_{n,\varepsilon}$ would fail. 

      \begin{table}[htbp]
        \centering
        \begin{tabular}{cccc}
          \hline
          $\varepsilon$& $n=100$ & $n=1000$ & TRUE \\
          \hline
          0.1 & -3.97473 & -4.00146 & -4.0 \\ [.1em]
            & (0.171050) & (0.188378) \\ [.4em] 
          0.01 & -3.97789 & -3.99837 & -4.0\\ [.1em]
            & (0.019060) & (0.018642) \\ 
          \hline
        \end{tabular}\caption{Means (standard deviations) of $\hat\theta_n$ with 1000 iterations.}
        \label{table:WeakConsistncy}
      \end{table}

      \begin{figure}[htbp]
        \centering
        \begin{minipage}[htbp]{0.4\hsize}
        \centering
        $n = 100$, $\varepsilon = 0.1$
        \includegraphics[width=0.85\hsize]{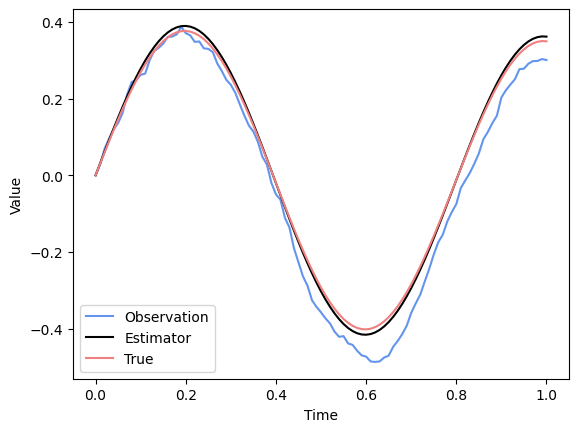}
  
        $n = 100$, $\varepsilon = 0.01$
        \includegraphics[width=0.85\hsize]{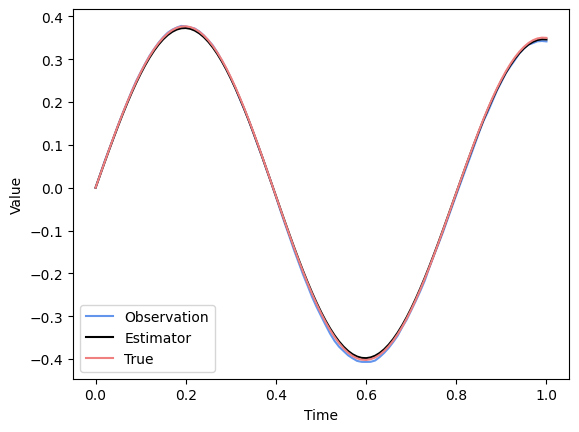}
      \end{minipage}
      \begin{minipage}[htbp]{0.4\hsize}
        \centering
        $n = 1000$, $\varepsilon = 0.1$
        \includegraphics[width=0.85\hsize]{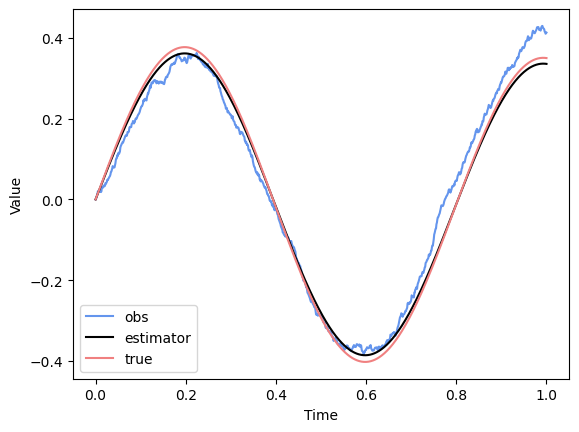}
        
        $n = 1000$, $\varepsilon = 0.01$
        \includegraphics[width=0.85\hsize]{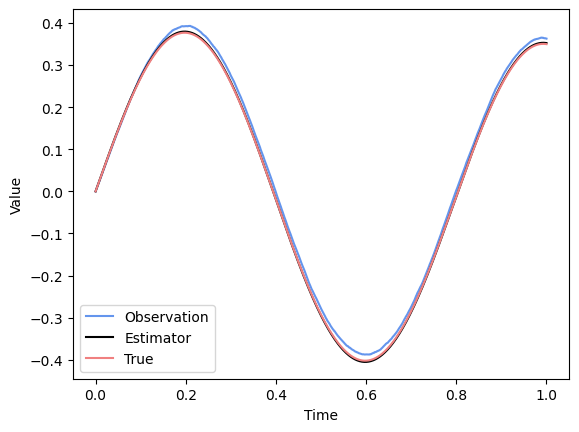}
      \end{minipage}
      \caption{Sample Paths of $X^\varepsilon$ (blue), the true mean functions (orange) 
      and the estimators $\hat{h}_{n,\varepsilon}$ (black).}\label{fig:consistency}
      \end{figure}
  
      \begin{figure}[htbp]
        \centering
        \begin{minipage}[htbp]{0.4\hsize}
          \centering
          $n = 100$, $\varepsilon = 0.1$
          \includegraphics[width=0.85\columnwidth]{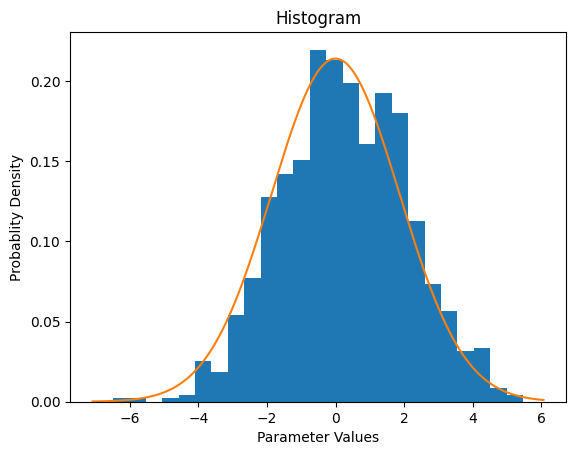}
          \vfill
          $n = 100$, $\varepsilon = 0.01$
          \includegraphics[width=0.85\columnwidth]{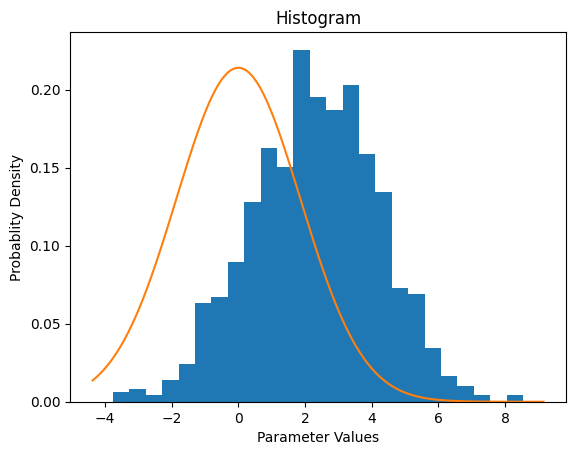}
        \end{minipage}
        \begin{minipage}[htbp]{0.4\hsize}
          \centering
          $n = 1000$, $\varepsilon = 0.1$
          \includegraphics[width=0.85\columnwidth]{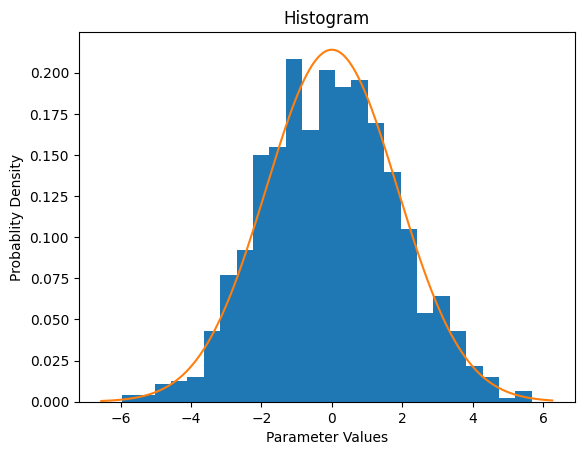}
          \vfill
          $n = 1000$, $\varepsilon = 0.01$
          \includegraphics[width=0.85\columnwidth]{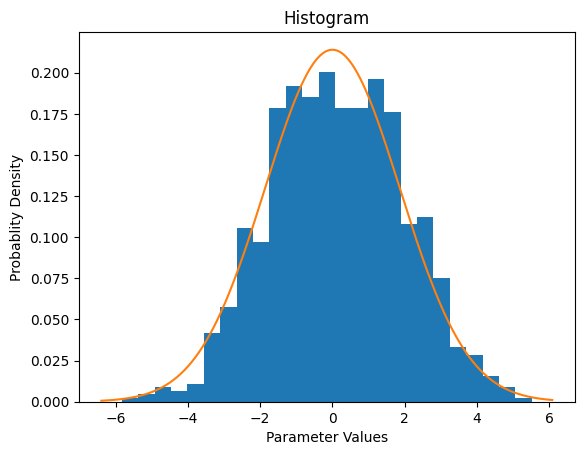}
        \end{minipage}
        \caption{Histograms for $\varepsilon^{-1}(\hat{\theta}_{n,\varepsilon}-\theta_0)$ with 1000 iterations, and the density of $N(0,2.35^2)$ (orange) with 1000 iterations.}\label{fig:AsymototicNormality}
      \end{figure}
  
      \begin{figure}[htbp]
        \centering
        \begin{minipage}[htbp]{0.40\hsize}
          \centering
          $n = 100$, $\varepsilon = 0.1$
          \includegraphics[width=0.85\columnwidth]{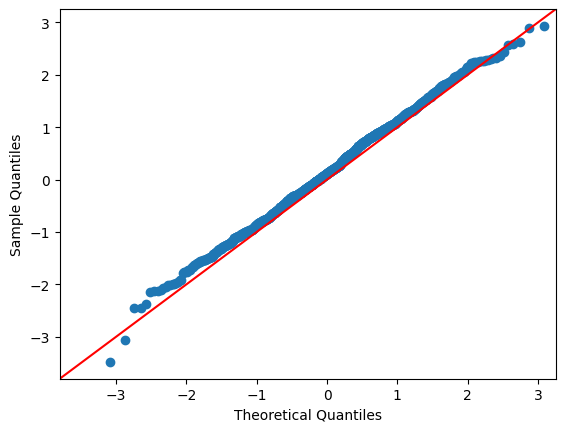}
          \vfill
          \vfill
          $n = 100$, $\varepsilon = 0.01$
          \vfill
          \vfill
          \includegraphics[width=0.85\columnwidth]{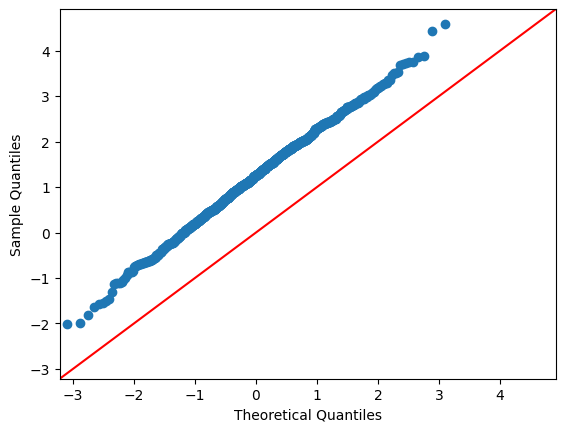}
        \end{minipage}
        \begin{minipage}[htbp]{0.40\hsize}
          \centering
          $n = 1000$, $\varepsilon = 0.1$
          \includegraphics[width=0.85\columnwidth]{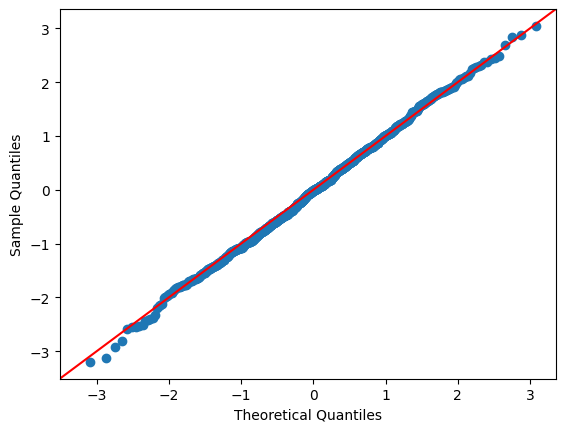}
          \vfill
          \vfill
          $n = 1000$, $\varepsilon = 0.01$
          \vfill
          \vfill
          \includegraphics[width=0.85\columnwidth]{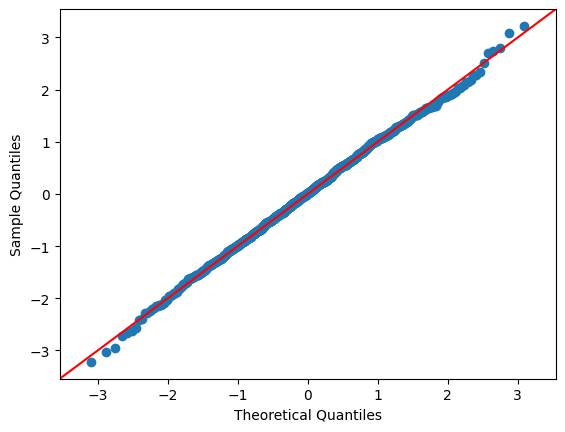}
        \end{minipage}
        \caption{QQ-Plots for $\varepsilon^{-1}(\hat{\theta}_{n,\varepsilon}-\theta_0)$ with 1000 iterations.}\label{fig:QQPlot}
      \end{figure}

  \section*{Acknowledgement}
      This work is partially supported by JSPS KAKENHI Grant-in-Aid for Scientific Research (C) \#24K06875, and Japan Science and Technology Agency CREST  JPMJCR2115. Also, the authors sincerely thank the anonymous reviewers for their insightful comments that have enhanced the quality of this paper. 

\appendix

\section*{Appendix}
  \addcontentsline{toc}{section}{Appendices}

\section{Operators}\label{apdx:operators}

  We shall give the propositions and lemmas,
  and the symbols $\X$, $\X^*$, $\K$,
  $\XP$, $I^*$, $I$, $\MMM$ and $\Theta$ are the same spaces and operators presented in \Cref{sec:prelimnaries,sec:mainresult}.
  \begin{proposition}\label{prop:OpnormOfIandI*}
    Let $\X$ be a Banach space.
    $I^*:\X^*\to\XP$ and $I:\XP\to\X$ are bounded, and their operator norms satisfy
    \begin{equation*}
      \OPN{I^*} = \OPN{I} = \OPN{\K}^{1/2}.
    \end{equation*}
  \end{proposition}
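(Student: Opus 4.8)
The plan is to reduce both norm computations to the single quadratic identity $\|I^*f\|_{\XP}^2 = \innerp{f,\K f}$ for $f \in \X^*$, which is the diagonal case $f=g$ of the relation $\innerp{f,\K g} = (I^*f,I^*g)_{\XP}$ recorded in the remark following the definition of $I$ (together with $\K = II^*$). Granting this, $\|I^*\|_{\mathrm{op}}^2$ equals $\sup_{\|f\|_{\X^*}\le 1}\innerp{f,\K f}$, so the first task is to identify this supremum with $\OPN{\K}$; the second task, computing $\OPN{I}$, then follows from the adjoint relation between $I$ and $I^*$, which forces their operator norms to agree.

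First I would prove $\OPN{\K} = \sup_{\|f\|_{\X^*}\le 1}\innerp{f,\K f} =: M$. One direction is immediate: $\innerp{f,\K f} \le \|f\|_{\X^*}\,\|\K f\|_{\X} \le \OPN{\K}\|f\|_{\X^*}^2$, whence $M \le \OPN{\K}$. For the reverse, I use that $(f,g)\mapsto\innerp{f,\K g}$ is a symmetric, positive semidefinite bilinear form on $\X^*$ (symmetry and positivity are built into the definition of a covariance operator, and also follow directly from $\innerp{f,\K g} = (I^*f,I^*g)_{\XP}$), so Cauchy--Schwarz gives $|\innerp{g,\K f}| \le \innerp{g,\K g}^{1/2}\innerp{f,\K f}^{1/2}$. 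Taking the supremum over $\|g\|_{\X^*}\le 1$ of the left-hand side yields $\|\K f\|_{\X} \le M^{1/2}\innerp{f,\K f}^{1/2} \le M\|f\|_{\X^*}$, hence $\OPN{\K} \le M$. Combined with $\|I^*f\|_{\XP}^2 = \innerp{f,\K f}$, this gives $\OPN{I^*}^2 = M = \OPN{\K}$, so $I^*$ is bounded and $\OPN{I^*} = \OPN{\K}^{1/2}$.

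Next I would obtain $\OPN{I} = \OPN{I^*}$ directly from $\innerp{g,Iz} = (I^*g,z)_{\XP}$. On one hand, $\|Iz\|_{\X} = \sup_{\|g\|_{\X^*}\le 1}|\innerp{g,Iz}| = \sup_{\|g\|_{\X^*}\le 1}|(I^*g,z)_{\XP}| \le \OPN{I^*}\|z\|_{\XP}$ by Cauchy--Schwarz in $\XP$, so $\OPN{I}\le\OPN{I^*}$. On the other hand, since $\XP$ is a Hilbert space, $\|I^*g\|_{\XP} = \sup_{\|z\|_{\XP}\le 1}|(I^*g,z)_{\XP}| = \sup_{\|z\|_{\XP}\le 1}|\innerp{g,Iz}| \le \OPN{I}\|g\|_{\X^*}$, so $\OPN{I^*}\le\OPN{I}$. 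Chaining the equalities yields $\OPN{I} = \OPN{I^*} = \OPN{\K}^{1/2}$, and in particular both operators are bounded.

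The only point requiring care is the step $\OPN{\K} = \sup_{\|f\|_{\X^*}\le1}\innerp{f,\K f}$: because $\X$ is merely Banach rather than Hilbert, one cannot simply quote the self-adjoint spectral bound, and the Cauchy--Schwarz argument above is exactly what replaces it. Everything else is bookkeeping, the main subtlety being to keep straight that $\XP$ (not $\X$) is the Hilbert space in which inner products and the supremum-representation of the norm are used.
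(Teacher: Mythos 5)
Your proof is correct, but it is organized differently from the paper's. The paper first gets the two one-sided bounds $\OPN{I^*}\le\OPN{\K}^{1/2}$ (from $\norm{I^*f}_{\XP}^2=\innerp{f,\K f}\le\OPN{\K}\norm{f}^2$) and $\OPN{I}\le\OPN{I^*}\le\OPN{\K}^{1/2}$ (by the same duality computation you use), and then closes the loop in one stroke with the submultiplicative inequality $\OPN{\K}\le\OPN{I}\OPN{I^*}$ coming from the factorization $\K=II^*$; no separate identification of $\OPN{\K}$ with the supremum of the quadratic form is ever needed. You instead prove the exact identity $\OPN{\K}=\sup_{\norm{f}_{\X^*}\le1}\innerp{f,\K f}$ via the generalized Cauchy--Schwarz inequality for the positive semidefinite form $(f,g)\mapsto\innerp{f,\K g}$, which gives $\OPN{I^*}=\OPN{\K}^{1/2}$ on the nose, and then derive $\OPN{I}=\OPN{I^*}$ from the adjoint relation. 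Both arguments are complete and of comparable length; the paper's is slightly slicker because the factorization argument sidesteps the Cauchy--Schwarz step entirely, while yours isolates a reusable fact (the ``$C^*$-identity''-style formula for $\OPN{\K}$ on a Banach space) that the paper leaves implicit. One small point worth making explicit in your write-up: the identities $\norm{\K f}_{\X}=\sup_{\norm{g}_{\X^*}\le1}\abs{\innerp{g,\K f}}$ and $\norm{Iz}_{\X}=\sup_{\norm{g}_{\X^*}\le1}\abs{\innerp{g,Iz}}$ rely on the canonical embedding $\X\hookrightarrow\X^{**}$ being isometric (Hahn--Banach), which is exactly where the Banach-space hypothesis enters.
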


  \begin{proof}
    Recall that $\K$ is a bounded linear operator between Banach space $\X^*$ and $\X$. The map $I^*:\X^*\to\XP$ is a bounded linear operator with $\OPN{I^*}\leq\OPN{\K}^{1/2}$, since 
    \begin{equation*}
      \norm{I^*\mu}_{\XP}^2
      = \innerp{\mu,\K\mu}
      \leq \OPN{\K}\norm{\mu}^2.
    \end{equation*}
    Moreover, $I:\XP\to\X$ is also a bounded linear operator with $\OPN{I}\leq\OPN{\K}^{1/2}$, since
    \begin{equation*}
      \begin{aligned}
        \norm{Iz}_{\X}
        &= \sup_{\norm{\mu}\leq1} \innerp{\mu,Iz}
        = \sup_{\norm{\mu}\leq1} \qty( I^*\mu, z )_{\XP} \\
        &\leq \sup_{\norm{\mu}\leq1} \norm{I^*\mu}_{\XP} \norm{z}_{\XP}
        \leq \OPN{\K}^{1/2} \norm{z}_{\XP}
      \end{aligned}
    \end{equation*}
    for any $z \in \XP$. Finally, $\OPN{\K} \leq \OPN{I}\OPN{I^*}$ leads us the consequence.
  \end{proof}

  \begin{lemma}\label{lem:sym_schwartz}
    For $\mu,\nu \in \MM$, we have
    \begin{equation*}
        \SN{\K\mu} \le \OPN{\K} \MTN{[\mu]},
        \quad 
        |\innerp{ \mu, \K\nu }| \le \SN{\K \mu} \MTN{[\nu]}
        \quad \text{and} \quad 
        \norm{ \K \mu }_{\HP}^2
        \le \OPN{\K} \norm{[\mu]}^2_{\MMM}
    \end{equation*}
  \end{lemma}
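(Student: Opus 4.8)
The plan is to exploit the fact that each of the three left-hand sides depends only on the coset of $\mu$ (or $\nu$) in $\MMM$, so that it may be bounded through \emph{any} representative and then optimized over the coset against the total variation norm. The relevant invariances are: $\K\mu=\K\mu'$ whenever $\mu-\mu'\in\ker\K$; for symmetric $\K$, the pairing $\innerp{\mu,\K\nu}=\innerp{\nu,\K\mu}$ is unchanged when $\nu$ is replaced by any $\nu'$ with $\nu-\nu'\in\ker\K$ (the difference pairs trivially with $\K\mu$); and $\innerp{\mu,\K\mu}$ is a well-defined function of $[\mu]$. These are exactly the computations already carried out in \Cref{prop:WelldefContinuousForMLEContrast}.

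For the first inequality, since $\K:\MM\to\CC$ is bounded and $\K\mu=\K\mu'$ for every $\mu'\in[\mu]$, one has $\SN{\K\mu}=\SN{\K\mu'}\le\OPN{\K}\MN{\mu'}$; taking the infimum over the coset and recalling the definition of the quotient norm gives $\SN{\K\mu}\le\OPN{\K}\MTN{[\mu]}$. For the second, I would write $\innerp{\mu,\K\nu}=\innerp{\nu,\K\mu}=\int_T(\K\mu)(t)\,\nu(\dd{t})$ by symmetry of $\K$; then $\abs{\innerp{\mu,\K\nu}}\le\SN{\K\mu}\MN{\nu'}$ for every $\nu'\in[\nu]$, the left-hand side being independent of the choice of $\nu'$, and infimizing over $\nu'$ yields $\abs{\innerp{\mu,\K\nu}}\le\SN{\K\mu}\MTN{[\nu]}$.

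For the third inequality, I would use $\K=II^*$ together with the fact that $I:\XP\to\HP$ is a linear isometry (immediate from $\norm{Iz}_{\HP}=\norm{I^{-1}Iz}_{\XP}=\norm{z}_{\XP}$) and the identity $\innerp{\mu,\K\mu}=(I^*\mu,I^*\mu)_{\XP}$ recorded after the definition of $I$, to obtain $\norm{\K\mu}_{\HP}^2=\norm{II^*\mu}_{\HP}^2=\norm{I^*\mu}_{\XP}^2=\innerp{\mu,\K\mu}$. Since this last quantity is a function of $[\mu]$ only, $\innerp{\mu,\K\mu}=\innerp{\mu',\K\mu'}\le\OPN{\K}\MN{\mu'}^2$ for every $\mu'\in[\mu]$, and taking the infimum finishes the proof. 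The argument is essentially bookkeeping; the only points requiring care are to keep straight which operator norm $\OPN{\K}$ denotes (that of $\K:\MM\to\CC$) and to invoke the isometry property of $I$ correctly, which is where a careless proof could slip.
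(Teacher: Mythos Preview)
Your proof is correct and follows essentially the same approach as the paper's: exploit that the relevant quantities depend only on the coset, bound by an arbitrary representative, and infimize over the coset to recover the quotient norm. The only cosmetic difference is in the third inequality, where the paper chains the first two inequalities ($\norm{\K\mu}_{\HP}^2=\abs{\innerp{\mu,\K\mu}}\le\SN{\K\mu}\MTN{[\mu]}\le\OPN{\K}\MTN{[\mu]}^2$) while you bound $\innerp{\mu',\K\mu'}\le\OPN{\K}\MN{\mu'}^2$ directly before infimizing; both are immediate.
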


  \begin{proof}
    By the definition of the quotient norm, 
    \begin{equation*}
        \begin{aligned}
          \SN{\K\mu}
          &= \inf_{\varphi\in N} \SN{ \K (\mu+\varphi)}
          \le \inf_{\varphi\in N} \OPN{\K} \MN{\mu+\varphi}\\ 
          &= \OPN{\K} \inf_{\varphi\in N} \MN{\mu+\varphi}
          =\OPN{\K} \MTN{[\mu]}.
        \end{aligned}
    \end{equation*}
    In addition, since $\K$ is symmetric,
    \begin{equation*}
        \begin{aligned}
          |\innerp{ \mu, \K \nu }| 
          &= \inf_{\varphi\in N} \abs{ \innerp{ \nu, \K(\mu+\varphi) } }
          = \inf_{\varphi\in N}\abs{ \innerp{ \mu+\varphi, \K\nu } } \\
          &\le \inf_{\varphi\in N}\MN{\mu+\varphi} \SN{\K\nu}
          = \MTN{[\mu]} \SN{\K\nu}.
        \end{aligned}
    \end{equation*}
    Therefore, $\norm{ \K \mu }_{\HP}^2
        = \abs{ \innerp{ \mu, \K \mu } }
        \le \OPN{\K} \norm{[\mu]}^2_{\MMM}$.
  \end{proof}

  \begin{lemma}\label{lem:measure_partial_change}
    If $\mu_{\theta}$ is differentiable with respect to $\theta$, then 
    \begin{itemize}
      \item $\partial_i \innerp{ \mu_\theta, x } = \innerp{ \partial_i \mu_\theta, x }$ for $x \in \CC$.
      \item $\partial_i \innerp{ \mu_{\theta}, \K \mu_{\theta} } = 2 \innerp{ \partial_i\mu_{\theta}, \K \mu_{\theta} }$.
      \item Moreover, if $\mu_{\theta}$ is twice differentiable, then
        \begin{equation*}
            \partial_{ij} \innerp{ \mu_{\theta}, \K \mu_{\theta} } = 2 \innerp{ \partial_{ij} \mu_{\theta}, \K \mu_{\theta} } + 2 \innerp{ \partial_i\mu_{\theta}, \K (\partial_j\mu_{\theta}) }.
        \end{equation*}
        \end{itemize}
  \end{lemma}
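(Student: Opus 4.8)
The plan is to read all three identities as applications of the chain rule for Fr\'echet-differentiable maps composed with bounded linear and bilinear forms on $(\MM,\MN{\cdot})$. Throughout, ``differentiable with respect to $\theta$'' is understood in the Fr\'echet sense of \Cref{def:DifferentiabilityOfMuTheta}, so that $\partial_i\mu_\theta$ and $\partial_{ij}\mu_\theta$ are genuine partial derivatives of $\theta\mapsto\mu_\theta$ viewed as a map into the Banach space $\MM$, and one only needs to transport these derivatives through continuous (multi)linear operations.

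For the first bullet, I would observe that for fixed $x\in\CC$ the functional $\ell_x\colon\MM\to\R$, $\mu\mapsto\innerp{\mu,x}$, is linear and bounded, since $\abs{\innerp{\mu,x}}\le\SN{x}\MN{\mu}$. Hence $\theta\mapsto\innerp{\mu_\theta,x}$ is $\ell_x$ composed with a Fr\'echet-differentiable map, and the chain rule gives $\partial_i\innerp{\mu_\theta,x}=\ell_x(\partial_i\mu_\theta)=\innerp{\partial_i\mu_\theta,x}$.

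For the second and third bullets I would introduce the bilinear form $B(\mu,\nu)\coloneqq\innerp{\mu,\K\nu}$ on $\MM\times\MM$. It is bounded because $\K\colon\MM\to\CC$ is bounded and $\abs{B(\mu,\nu)}\le\OPN{\K}\MN{\mu}\MN{\nu}$, and it is symmetric because $\K$ is symmetric, i.e. $\innerp{\mu,\K\nu}=\innerp{\nu,\K\mu}$. A bounded bilinear form is smooth with derivative given by the product rule, and $\theta\mapsto(\mu_\theta,\mu_\theta)$ is Fr\'echet differentiable into $\MM\times\MM$; composing and applying the chain and product rules,
\[
\partial_i\innerp{\mu_\theta,\K\mu_\theta}=B(\partial_i\mu_\theta,\mu_\theta)+B(\mu_\theta,\partial_i\mu_\theta)=2B(\partial_i\mu_\theta,\mu_\theta)=2\innerp{\partial_i\mu_\theta,\K\mu_\theta},
\]
where symmetry of $B$ is used in the middle. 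Differentiating once more under the hypothesis that $\mu_\bullet$ is twice Fr\'echet differentiable, the map $\theta\mapsto(\partial_i\mu_\theta,\mu_\theta)$ is Fr\'echet differentiable, so the product rule yields $\partial_{ij}\innerp{\mu_\theta,\K\mu_\theta}=2B(\partial_j\partial_i\mu_\theta,\mu_\theta)+2B(\partial_i\mu_\theta,\partial_j\mu_\theta)$; symmetry of the second Fr\'echet derivative identifies $\partial_j\partial_i\mu_\theta$ with $\partial_{ij}\mu_\theta$, giving $2\innerp{\partial_{ij}\mu_\theta,\K\mu_\theta}+2\innerp{\partial_i\mu_\theta,\K(\partial_j\mu_\theta)}$.

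No serious obstacle arises; the only points requiring care are recording that the relevant pairings are bounded on $(\MM,\MN{\cdot})$ so that the abstract calculus applies, and invoking symmetry of $\K$ (and of the second derivative of $\mu_\bullet$) at the right moments. If one prefers a bare-hands argument, each identity follows by expanding the difference quotient in the $i$-th coordinate, e.g. $h^{-1}(\innerp{\mu_{\theta+he_i},x}-\innerp{\mu_\theta,x})$, and passing to the limit using $\abs{\innerp{\nu,x}}\le\SN{x}\MN{\nu}$, $\abs{\innerp{\mu,\K\nu}}\le\OPN{\K}\MN{\mu}\MN{\nu}$, and $\MN{h^{-1}(\mu_{\theta+he_i}-\mu_\theta)-\partial_i\mu_\theta}\to0$, adding and subtracting cross terms in the bilinear cases.
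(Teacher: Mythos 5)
Your proposal is correct. The route you take as primary is genuinely different in packaging from the paper's: you factor each expression through a bounded linear functional $\ell_x$ or a bounded symmetric bilinear form $B(\mu,\nu)=\innerp{\mu,\K\nu}$ on $(\MM,\MN{\cdot})$ and then invoke the abstract chain and product rules for Fr\'echet-differentiable maps, together with symmetry of the second Fr\'echet derivative to identify $\partial_j\partial_i\mu_\theta$ with $\partial_{ij}\mu_\theta$. The paper instead works entirely with difference quotients: it writes out $\delta^{-1}(\innerp{\mu_{\theta+\delta e_i},\K\nu_{\theta+\delta e_i}}-\innerp{\mu_\theta,\K\nu_\theta})$, adds and subtracts the cross term, and passes to the limit using the same bounds $\abs{\innerp{\mu,x}}\le\SN{x}\MN{\mu}$ and $\abs{\innerp{\mu,\K\nu}}\le\OPN{\K}\MN{\mu}\MN{\nu}$ that you record; this is exactly the ``bare-hands'' alternative you sketch in your closing sentence. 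The two arguments are mathematically equivalent, and the estimates doing the work are identical; what your framing buys is that the boundedness of the pairings is stated once and the rest is quoted calculus, whereas the paper's computation is self-contained and makes visible where the Gateaux limits are actually taken (which matters, since the hypothesis is only differentiability of $\theta\mapsto\mu_\theta$ into the Banach space $\MM$). Either version is acceptable.
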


  \begin{proof}
    Since $\MM=\CC^*$,  
    \begin{equation*}
      \partial_i \innerp{ \mu_\theta, x }
      = \lim_{\delta\to 0} \frac{ \innerp{ \mu_{\theta+\delta e_i}, x } - \innerp{ \mu_\theta, x } }{\delta}
      = \lim_{\delta\to 0} \innerp*{ \frac{\mu_{\theta+\delta e_i}-\mu_\theta}{\delta} , x }
      =\innerp{ \partial_i \mu_\theta, x }.
    \end{equation*}
    Similarly, we obtain 
    \begin{align*}
      \partial_i  \innerp{ \mu_\theta, \K \nu_\theta }
      &= \lim_{\delta\to 0} 
        \frac{ \innerp{ \mu_{\theta+\delta e_i}, \K\nu_{\theta+\delta e_i} } - \innerp{ \mu_\theta, \K\nu_\theta } }{\delta} \\
      &= \lim_{\delta\to 0} 
        \qty{ \innerp*{\frac{\mu_{\theta+\delta e_i} - \mu_\theta}{\delta}, \K \nu_\theta} 
        + \innerp*{\mu_{\theta+\delta e_i}, \K \frac{\nu_{\theta+\delta e_i}-\nu_\theta}{\delta}}}\\
      &= \innerp{\mu_\theta, \K(\partial_i \nu_\theta)} + \innerp{\partial_i \mu_\theta, \K \nu_\theta}.
    \end{align*}
    Moreover, it follows that
    \begin{align*}
      \partial_{ij} \innerp{ \mu_{\theta}, \K \mu_{\theta} }
      &= 2 \, \partial_j \innerp{ \partial_i\mu_{\theta}, \K \mu_{\theta} } \\
      &= 2 \lim_{\delta\to 0} 
        \frac{\innerp{ \partial_i\mu_{\theta+ \delta e_j}, \K\mu_{\theta+ \delta e_j} } 
        - \innerp{ \partial_i\mu_{\theta+ \delta e_j}, \K \mu_{\theta} } }{\delta} \\ 
      &\quad 
        + 2 \lim_{\delta\to 0} 
        \frac{ \innerp{ \partial_i\mu_{\theta+ \delta e_j}, \K\mu_\theta } 
        - \innerp{ \partial_i\mu_{\theta}, \K \mu_{\theta} } }{\delta}\\
      &= 2 \innerp*{ \partial_{ij} \mu_{\theta}, \K \mu_{\theta} } 
        + 2 \innerp*{ \partial_i\mu_{\theta}, \K (\partial_j\mu_{\theta}) }.
    \end{align*}
  \end{proof}

  \begin{lemma}\label{lem:convergence of discret.mu}
    For $\theta\in\Theta$ and $\mu_\theta \in \MM$,
    \begin{itemize}
      \item If $\mu_\theta$ is differentiable with respect to $\theta$, then discretization operator $(\cdot)^n$ for $\mu_\theta$ and the differential operator $\partial_i=\pdv{\theta_i}$ are commutable, i.e., $\partial_i \left(\mu^n_\theta \right) = \left(\partial_i \mu_\theta \right)^n$. 
      \item $\sup_{n\in\N}\MN{\mu^n_\theta} \le \MN{\mu_\theta}$.
      \item The sequence $\{\mu^n_\theta\}_{n\in \mathbb{N}}$ converges 
            in the weak$^{*}$ topology to $\mu_\theta$, 
            i.e.,
            $\innerp{\mu^n_\theta,f} \to \innerp{\mu_\theta,f},\quad n\to\infty$ for any $f\in \CC$. 
            Moreover, if $f\in \K(\M)$ and $\sup_{\theta} \MN{\mu_\theta}<\infty$, 
            then this convergence holds uniformly with respect to $\theta\in\Theta$:
            \begin{equation*}
              \sup_{\theta\in\Theta} \abs{ \innerp{\mu^n_\theta,f} - \innerp{\mu_\theta,f} } \to 0,\quad n\to\infty.
            \end{equation*}
      \item $\OPN{\K^n- \K}\to 0,\quad n\to\infty$.
    \end{itemize}
  \end{lemma}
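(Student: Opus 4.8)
The plan is to prove the four assertions in turn; none is deep, and each reduces to one of: the partition identity $\sum_i\abs{\mu}(T_i)=\MN{\mu}$, a bounded-linear-operator argument, or uniform continuity of a continuous function on the compact sets $T$ and $T^2$. Throughout write $\rho_n\coloneqq\sup_i\abs{t_i-t_{i-1}}$, which tends to $0$ as $n\to\infty$ by the standing assumption on the partitions $\mathcal{Q}^n$.

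\emph{The bound $\MN{\mu^n_\theta}\le\MN{\mu_\theta}$ and commutation with $\partial_i$.} I would prove the second assertion first, since it supplies the operator bound used for the first: since $\{T_i\}_i$ partitions $T$,
\begin{equation*}
  \MN{\mu^n_\theta}=\sum_i\abs{\mu_\theta(T_i)}\le\sum_i\abs{\mu_\theta}(T_i)=\abs{\mu_\theta}(T)=\MN{\mu_\theta},
\end{equation*}
uniformly in $n$; in particular $(\cdot)^n\colon\MM\to\MM$ is linear and bounded with $\OPN{(\cdot)^n}\le1$. Since a bounded linear operator commutes with Fr\'echet differentiation, $\partial_i(\mu^n_\theta)=(\cdot)^n(\partial_i\mu_\theta)=(\partial_i\mu_\theta)^n$ whenever $\theta\mapsto\mu_\theta$ is differentiable, and likewise for higher derivatives.

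\emph{Weak$^*$ convergence.} For $f\in\CC$, writing $f(t_i)$ for the value attached to the block $T_i$,
\begin{equation*}
  \innerp{\mu^n_\theta-\mu_\theta,f}=\sum_i\int_{T_i}\bigl(f(t_i)-f(t)\bigr)\,\mu_\theta(\dd{t}),
\end{equation*}
so $\abs{\innerp{\mu^n_\theta-\mu_\theta,f}}\le\omega_f(\rho_n)\MN{\mu_\theta}$, where $\omega_f$ is the modulus of continuity of $f$ (finite since $f$ is uniformly continuous on the compact set $T$). This goes to $0$ as $\rho_n\to0$; if moreover $\sup_\theta\MN{\mu_\theta}<\infty$, the bound $\omega_f(\rho_n)\sup_\theta\MN{\mu_\theta}$ is independent of $\theta$, yielding the uniform convergence. (The hypothesis $f\in\K(\M)$ is used only via $\K(\M)\subset\CC$.)

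\emph{Operator-norm convergence $\OPN{\K^n-\K}\to0$, and the main obstacle.} This last item is the one that needs care, and the care is purely in identifying $\K^n$. Unwinding $\innerp{\mu,\K^n\nu}=\innerp{\mu^n,\K\nu^n}$ --- equivalently, using $\K^n\nu=\sum_{i,j}K(t_i,t_j)\nu(T_i)\idv_{T_j}$, which agrees with integration against the discretized kernel $K^n$ because $K$ is symmetric --- one gets $\innerp{\mu,\K^n\nu}=\iint_{T^2}K^n(s,t)\,\mu(\dd{s})\,\nu(\dd{t})$, exactly as $\innerp{\mu,\K\nu}=\iint_{T^2}K(s,t)\,\mu(\dd{s})\,\nu(\dd{t})$ by Fubini. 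Subtracting gives
\begin{equation*}
  \abs{\innerp{\mu,(\K^n-\K)\nu}}=\abs{\iint_{T^2}\bigl(K^n-K\bigr)(s,t)\,\mu(\dd{s})\,\nu(\dd{t})}\le\norm{K^n-K}_{L^\infty}\MN{\mu}\MN{\nu},
\end{equation*}
hence $\OPN{\K^n-\K}\le\norm{K^n-K}_{L^\infty}$; and finally $\norm{K^n-K}_{L^\infty}\le\omega_K(\rho_n)\to0$, $\omega_K$ being a modulus of continuity of $K$ on the compact square $T^2$. So the only real obstacle is keeping the three avatars of $\K^n$ straight --- the bilinear form $\innerp{\mu,\K^n\nu}=\innerp{\mu^n,\K\nu^n}$, the explicit formula $\K^n\nu=\sum_{i,j}K(t_i,t_j)\nu(T_i)\idv_{T_j}$, and integration against $K^n$; once that is done, everything is a routine use of moduli of continuity and of the partition identity.
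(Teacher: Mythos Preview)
Your proof is correct and follows essentially the same route as the paper's: both reduce everything to the partition identity, uniform continuity on compact sets, and the computation $\innerp{\mu^n_\theta-\mu_\theta,f}=\innerp{\mu_\theta,f^n-f}$. Your use of the abstract fact that bounded linear operators commute with Fr\'echet differentiation is slightly slicker than the paper's direct limit computation for $\partial_i(\mu^n_\theta)$, and your observation that the uniform-in-$\theta$ convergence actually holds for all $f\in\CC$ (not just $f\in\K(\M)$) is correct---the paper's extra hypothesis is not used in any essential way there.
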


  \begin{proof}
    It follows from the definition of $\partial_j(\mu^n_\theta)$ that
    \begin{align*}
      \partial_j(\mu^n_\theta) 
      &= \lim_{\gamma\to 0} \frac{\sum_{i=1}^n \mu_{\theta+\gamma e_j}(T_i^n) \, \delta_{t_i^n} - \sum_{i=1}^n \mu_{\theta}(T_i^n) \, \delta_{t_i^n}}{\gamma}\\
      &= \lim_{\gamma\to 0} \sum_{i=1}^n \frac{\mu_{\theta+\gamma e_j}-\mu_\theta}{\gamma}(T_i) \, \delta_{t_i}
      = \sum_{i=1}^n \qty( \partial_j\mu_\theta ) \qty(T_i) \, \delta_{t_i}
      = \qty( \partial_j\mu_\theta )^n.
    \end{align*}
    We see that
    \begin{equation*}
      \MN{\mu^n_\theta}
      = \sup_{\SN{f}\le 1} \abs{ \innerp{ \mu^n_\theta, f } }
      \le \sup_{\SN{f}\le 1} \sum_{i=1}^n \int_{T_i} \abs{ 
      f(t_i^n) } \, \abs{\mu_\theta}(\dd{s})
      \le \MN{\mu_\theta}.
    \end{equation*}
    
    For any $f \in \CC$,
    \begin{equation*}
        \begin{aligned}
          \abs{ \innerp{ \mu^n_\theta-\mu_\theta, f } }
          &= \abs{ \sum_{i=1}^n \mu_\theta(T^n) f(t_i) - \int_T f(t) \, \mu_\theta(\dd{t}) } \\
          &= \abs{ \int_T \qty(f^n(t) - f(t)) \, \mu_\theta(\dd{t}) }
          = \abs{ \innerp{\mu_\theta,f^n-f} }.
        \end{aligned}  
    \end{equation*}
    Since $f$ is uniformly continuous on $T$ 
    and $\displaystyle \lim_{n\to \infty} \sup_{i=1,\dots,n} |t_i-t_{i-1}| =0$, 
    \begin{equation*}
      \innerp{\mu^n_\theta,f} \to \innerp{\mu_\theta,f},\quad n\to\infty.
    \end{equation*}
    Moreover, if there exists $\nu\in\M$ such that $f=\K \nu$, then
    \begin{equation*}
      \sup_{\theta\in\Theta} \abs{ \innerp{\mu^n_\theta - \mu_\theta, f} } =\sup_{\theta\in\Theta} \abs{ \innerp{\mu_\theta, (\K\nu)^n-\K\nu)} } 
      \le \sup_{\theta\in\Theta} \MN{\mu_\theta}\SN{(\K\nu)^n - \K\nu}.
    \end{equation*}
    Therefore, we obtain uniform convergence in $\theta$.

    Since $K(s,t)$ is uniformly continuous and $\displaystyle \lim_{n\to \infty} \sup_{i=1,\dots,n} |t_i-t_{i-1}| =0$,
    \begin{equation*}
        \begin{aligned}
          \OPN{\K^{n}-\K} 
          &= \sup_{\MN{\mu}\le 1} \SN{\left(\K^{n}-\K \right)\mu}
          = \sup_{\MN{\nu}\le 1} \sup_{\MN{\mu}\le 1} \abs{ \innerp{\nu,( \K^{n}-\K ) \mu} }\\
          &\le \sup_{\MN{\nu}\le 1} \sup_{\MN{\mu}\le 1} \sum_{i,j=1}^n \int_{\substack{{s\in T_i}\\ {t\in T_j}}} \abs{ K(t_i, t_j) - K(s,t) } \, \abs{\mu}(\dd{s}) \, \abs{\mu}(\dd{t})\\
          &\le \sup_{\MN{\nu}\le 1} \sup_{\MN{\mu}\le 1} \MN{\mu} \MN{\nu} \max_{i,j=1,\dots,n} \sup_{\substack{{s\in T_i}\\ {t\in T_j}}} \left| K(t_i, t_j) - K(s,t) \right|
          \to 0,
        \end{aligned}
    \end{equation*}
    as $ n\to \infty$.
  \end{proof}

\section{Fundamental Tools}

    In this section, we introduce two fundamental tools.
    
  \begin{proposition}\label{prop:M-estimator}
    Let $ (\Phi_\varepsilon)_{\varepsilon>0}$ be a family of random functions on $\Theta$, and let $\Phi$ be a deterministic function on $\Theta$ such that for every $\delta>0$ and some $\theta_0\in\Theta$;
    \begin{equation}
        \begin{aligned}
            \sup_{\theta\in\Theta} |\Phi_\varepsilon(\theta)-\Phi(\theta)| 
            &\asconv 0, \quad \varepsilon\to 0,\\
            \sup_{\|\theta - \theta_0\|>\delta} \Phi(\theta)
            &< \Phi(\theta_0).
        \end{aligned}
        \label{eq:assump2}
    \end{equation}
    Then, any sequence of estimators $\hat{\theta}_\varepsilon$ satisfying that  $\Phi_\varepsilon ( \hat{ \theta }_\varepsilon ) \ge \Phi_\varepsilon (\theta_0)$ converges almost surely to $\theta_0$.
  \end{proposition}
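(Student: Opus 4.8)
The plan is to run the classical argmax/M-estimation argument: transfer the well-separation property of the limit contrast $\Phi$ to the random contrasts $\Phi_\varepsilon$ using the uniform convergence, and then invoke the near-maximizing property $\Phi_\varepsilon(\hat\theta_\varepsilon)\ge\Phi_\varepsilon(\theta_0)$.

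First I would fix a sample point $\omega$ in the almost-sure event on which $\sup_{\theta\in\Theta}|\Phi_\varepsilon(\theta)-\Phi(\theta)|\to 0$ as $\varepsilon\to0$; since this event has probability one, it suffices to prove the deterministic convergence $\hat\theta_\varepsilon(\omega)\to\theta_0$ for each such $\omega$. Fix $\delta>0$ and set
\[
  \eta \coloneqq \Phi(\theta_0) - \sup_{\|\theta-\theta_0\|>\delta}\Phi(\theta),
\]
which is strictly positive by the second condition in \eqref{eq:assump2}. By the uniform convergence there exists $\varepsilon_0>0$, depending on $\omega$ and on $\delta$, such that $\sup_{\theta\in\Theta}|\Phi_\varepsilon(\theta)-\Phi(\theta)|<\eta/3$ for all $\varepsilon<\varepsilon_0$.

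Next, for such $\varepsilon$ I would chain the estimates
\[
  \Phi(\hat\theta_\varepsilon)\;\ge\;\Phi_\varepsilon(\hat\theta_\varepsilon)-\tfrac{\eta}{3}\;\ge\;\Phi_\varepsilon(\theta_0)-\tfrac{\eta}{3}\;\ge\;\Phi(\theta_0)-\tfrac{2\eta}{3},
\]
where the outer inequalities use the uniform bound and the middle one is the hypothesis $\Phi_\varepsilon(\hat\theta_\varepsilon)\ge\Phi_\varepsilon(\theta_0)$. Hence $\Phi(\hat\theta_\varepsilon)>\Phi(\theta_0)-\eta=\sup_{\|\theta-\theta_0\|>\delta}\Phi(\theta)$, and this forces $\|\hat\theta_\varepsilon-\theta_0\|\le\delta$, for otherwise $\hat\theta_\varepsilon$ would belong to the set over which that supremum is taken, a contradiction. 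Since $\delta>0$ was arbitrary and $\varepsilon_0$ depends only on $\omega$ and $\delta$, this establishes $\hat\theta_\varepsilon(\omega)\to\theta_0$, and the proof is complete.

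I do not expect a genuine obstacle here; the only point deserving care is that the threshold $\varepsilon_0$ is chosen \emph{after} $\delta$ and is allowed to depend on the sample path, which is legitimate precisely because the uniform convergence is assumed almost surely (rather than merely in probability). Measurability of $\hat\theta_\varepsilon$ plays no role, as the statement is quantified over any sequence satisfying the near-maximization inequality.
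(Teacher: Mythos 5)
Your proposal is correct and follows essentially the same route as the paper's proof, which is the standard well-separated-maximum argument of van der Vaart's Theorem 5.7: both transfer the gap $\eta$ between $\Phi(\theta_0)$ and $\sup_{\|\theta-\theta_0\|>\delta}\Phi(\theta)$ to the random contrast via the almost-sure uniform convergence and then use $\Phi_\varepsilon(\hat\theta_\varepsilon)\ge\Phi_\varepsilon(\theta_0)$ to force $\hat\theta_\varepsilon$ into the $\delta$-ball. The only cosmetic difference is that you chain three $\eta/3$ bounds pointwise in $\omega$ whereas the paper bounds $\Phi(\theta_0)-\Phi(\hat\theta_\varepsilon)$ by an $o(1)$ quantity first; the content is identical.
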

  \begin{proof}
    The proof is almost the same as for Theorem 5.7 in \textcite{vaart1998asymptotic}. 
    \begin{equation}\label{eq:osaekomi}
      \Phi(\theta_0) - \Phi(\hat{\theta}_\varepsilon )
      \le \Phi_\varepsilon(\hat{\theta}_\varepsilon) - \Phi(\hat{\theta}_\varepsilon) + o(1)\asconv 0,
    \end{equation}
    as $\varepsilon \to 0$, where $o(1)$ means a sequence in $\R$ which converges to $0$.
    By \Cref{eq:assump2}, for any $\delta>0$, there exists $\eta>0$ such that 
    if $\abs{\theta-\theta_0} \ge \delta$, then $\Phi(\theta) < \Phi(\theta_0) -\eta. $        
    By contraposition, if
    $\Phi(\theta_0) - \Phi(\theta) \le \eta$ ,then
    $\abs{\theta-\theta_0}
    < \delta.$
    Then, by \Cref{eq:osaekomi},
    there exists $\varepsilon_0>0$ such that if $ 0<\varepsilon<\varepsilon_0 $, then
    $\Phi(\theta_0) - \Phi(\hat{\theta}_\varepsilon) \le \eta $
    which implies $\hat{\theta}_\varepsilon \to \theta_0$ as $\varepsilon \to 0$.
  \end{proof}
  
  \begin{remark}\label{rmk:DefOfEstimator}
    In order to present a more rigorous proof for \Cref{prop:M-estimator}, 
    we should set $\hat{\theta}_\varepsilon$ in the statement as 
    \begin{equation*}
      \hat{\theta}_\varepsilon \coloneqq 
      \Set{ \theta\in\Theta | \Phi_\varepsilon(\theta) > \Phi_\varepsilon(\theta_0) + o_\varepsilon },
    \end{equation*}
    where $o_\varepsilon$ is a sequence in $\R$ with $o_\varepsilon=o(1)$.
    Then, by the same argument, we obtain 
    \begin{equation*}
      \sup_{\theta\in\hat{\theta}_\varepsilon} \abs{ \theta - \theta_0 }
      \asconv 0
      \quad \text{as} \  
      \varepsilon\to0.
    \end{equation*}
  \end{remark}
  
  \begin{proposition}[Morrey's inequality]\label{prop:Morrey}
    Let $\Theta\subseteq\mathbb{R}^p$ be a closed bounded convex set with smooth boundary and with an interior point.
    For $\mu_\bullet\in\CkThM[1]$, $f\in\CC$ and $q>p$,
    \begin{equation*}
      \sup_{\theta\in\Theta} \abs{ \innerp*{ \mu_\theta, f } }
      \leq C \qty{ \qty( \int_\Theta \abs{ \innerp*{ \mu_\theta, f } }^q  \dd{\theta} )^{1/q}
      + \sum_{i=1}^p \qty( \int_\Theta \abs{ \innerp*{ \partial_i \mu_\theta, f } }^q  \dd{\theta} )^{1/q} },
    \end{equation*}
    where the constant $C$ depends only on $p$, $q$ and $\Theta$.
  \end{proposition}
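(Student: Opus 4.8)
The plan is to reduce this vector-valued statement to the classical scalar Morrey inequality on the domain $\Theta$. Fix $f\in\CC$ and define the scalar function $g_f\colon\Theta\to\R$ by $g_f(\theta)\coloneqq\innerp{\mu_\theta,f}$. Since $\mu_\bullet\in\CkThM[1]$, i.e.\ $\theta\mapsto\mu_\theta$ is Fr\'echet differentiable into $\MM=\CC^*$ with uniformly continuous first derivatives, and since $\nu\mapsto\innerp{\nu,f}$ is a bounded linear functional on $\MM$, the chain rule gives $g_f\in C^1(\Theta)$ with $\partial_i g_f(\theta)=\innerp{\partial_i\mu_\theta,f}$ for $i=1,\dots,p$; this is exactly the first item of \Cref{lem:measure_partial_change}. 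Because $\Theta$ is compact, $g_f$ and each $\partial_i g_f$ are bounded on $\Theta$, so $g_f\in W^{1,q}(\Theta)$ for every $q\in[1,\infty)$.

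Next I would invoke the classical Morrey inequality: for a bounded convex domain $\Theta\subset\R^p$ with smooth boundary and $q>p$, there is a constant $C=C(p,q,\Theta)$ such that $\norm{g}_{L^\infty(\Theta)}\le C\norm{g}_{W^{1,q}(\Theta)}$ for all $g\in W^{1,q}(\Theta)$ (see, e.g., \textcite{evans2010partial}). Convexity is convenient here since it permits the direct proof by averaging the fundamental theorem of calculus along segments joining points of $\Theta$, avoiding any extension operator; alternatively the smoothness of $\partial\Theta$ makes $\Theta$ a Sobolev extension domain and one applies the $\R^p$ version. Applying this with $g=g_f$ and estimating $\norm{g_f}_{W^{1,q}(\Theta)}\le\norm{g_f}_{L^q(\Theta)}+\sum_{i=1}^p\norm{\partial_i g_f}_{L^q(\Theta)}$ up to an absolute constant yields
\begin{equation*}
  \sup_{\theta\in\Theta}\abs{\innerp{\mu_\theta,f}}
  \le C\qty{ \qty(\int_\Theta\abs{\innerp{\mu_\theta,f}}^q\dd{\theta})^{1/q}
  + \sum_{i=1}^p\qty(\int_\Theta\abs{\innerp{\partial_i\mu_\theta,f}}^q\dd{\theta})^{1/q} },
\end{equation*}
which is the assertion; crucially $C$ depends only on $p$, $q$, $\Theta$ and not on $f$ or $\mu_\bullet$, since the Morrey constant is independent of the function to which it is applied.

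There is no genuine obstacle here: the statement is essentially a packaging of a standard embedding, and the only points requiring care are (i) the justification of $g_f\in C^1(\Theta)$ together with the differentiation-under-the-pairing identity, which is immediate from Fr\'echet differentiability of $\mu_\bullet$ and boundedness of the evaluation functional (\Cref{lem:measure_partial_change}), and (ii) verifying that the hypotheses on $\Theta$ (bounded, convex, smooth boundary, nonempty interior) suffice to run the classical Morrey estimate with a single constant, which is classical. In particular there is no uniformity issue over $f$, because the inequality is applied pointwise in $f$ and the scalar Morrey constant does not see $f$ at all.
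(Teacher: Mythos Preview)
Your proposal is correct and follows essentially the same route as the paper: define the scalar function $\theta\mapsto\innerp{\mu_\theta,f}$, use \Cref{lem:measure_partial_change} to see it lies in $W^{1,q}(\Theta)$, and then invoke the classical Morrey inequality (Evans, Theorem 5.6.5). The paper's own proof is just a terser version of exactly this argument.
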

  
  \begin{proof}
    Let $f\in\CC$.
    \Cref{lem:measure_partial_change} implies that $\innerp*{ \mu_\bullet, f }:\Theta\to\mathbb{R}$, $\theta\mapsto\innerp*{ \mu_\theta, f }$ is differentiable with bounded derivatives, 
    so that $\innerp{\mu_\bullet,f}\in W^{1,q}(\Theta)$ for any $q\geq1$, in particular, for any $q>p$.
    Thus, the conclusion holds by the usual Morrey inequality (see, e.g., Theorem 5.6.5 in \textcite{evans2010partial}).
  \end{proof}

\section{Usual argument to see \Cref{thm:QGAIC}}\label{sec:AnotherProofModelSelection}

  \begin{lemma}\label{lem:r1}
    For $A'_{n,\varepsilon}$ in the proof in \Cref{thm:dis_parametric asymptotic normality}, let us denote by 
    \begin{equation*}
      r^1_{n,\varepsilon} \coloneqq \left(\grad_\theta \Phi_{n,\varepsilon}(\hat\theta_{n,\varepsilon})\right)^\transp (\hat\theta_{n,\varepsilon} -\theta_0)  \idv_{(A'_{n,\varepsilon})^\complement}.
    \end{equation*}
    Under \Cref{assump:Theta,assump:cont,assump:isolated,assump:convex_Theta,assump:Sigma_regular,assump:convergent_rate,assump:MuIsC^2}, it holds that
    \begin{equation*}
      \E \qty[ \abs{ r^1_{n,\varepsilon} } ] = o(\varepsilon^2),
      \quad \varepsilon\to 0, \  n\to\infty.
    \end{equation*}
  \end{lemma}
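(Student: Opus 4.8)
The random variable $r^1_{n,\varepsilon}$ is, by its very definition, supported on $(A'_{n,\varepsilon})^\complement=\{\hat\theta_{n,\varepsilon}\notin\mathring\Theta\}$ (on $A'_{n,\varepsilon}$ the interior stationarity forces $\grad_\theta\Phi_{n,\varepsilon}(\hat\theta_{n,\varepsilon})=0$). The plan is to show that this boundary event is so rare — decaying faster than any power of $\varepsilon$ — that, even after multiplying by the benign factors $\grad_\theta\Phi_{n,\varepsilon}(\hat\theta_{n,\varepsilon})$ and $\hat\theta_{n,\varepsilon}-\theta_0$, its expectation drops below $\varepsilon^2$. The starting point is the pointwise bound
\[
\abs{r^1_{n,\varepsilon}}\le\Bigl(\sup_{\theta\in\Theta}\abs{\grad_\theta\Phi_{n,\varepsilon}(\theta)}\Bigr)\,\abs{\hat\theta_{n,\varepsilon}-\theta_0}\,\idv_{(A'_{n,\varepsilon})^\complement}.
\]

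First I would control the gradient uniformly in $(n,\varepsilon)$. Differentiating with \Cref{lem:measure_partial_change} and the commutation $\partial_i(\mu_\theta^n)=(\partial_i\mu_\theta)^n$ from \Cref{lem:convergence of discret.mu}, one has $\partial_i\Phi_{n,\varepsilon}(\theta)=\innerp{(\partial_i\mu_\theta)^n,X^\varepsilon}-\innerp{\partial_i\mu_\theta,\K^n\mu_\theta}$, whence $\sup_{\theta\in\Theta}\abs{\grad_\theta\Phi_{n,\varepsilon}(\theta)}$ is at most a constant multiple of $1+\SN{Z}$, uniformly for small $\varepsilon$ and all $n$ (using $\SN{X^\varepsilon}\le\SN{h_0}+\varepsilon\SN{Z}$, $\MN{\nu^n}\le\MN{\nu}$, and $\sup_n\OPN{\K^n}<\infty$ from \Cref{lem:convergence of discret.mu}). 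Since $\SN{Z}$ is Gaussian, $\E[\sup_{\theta\in\Theta}\abs{\grad_\theta\Phi_{n,\varepsilon}(\theta)}^q]$ is then bounded uniformly in $(n,\varepsilon)$ for every $q\in[1,\infty)$.

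Next I would upgrade the indicator to a high power of $\abs{\hat\theta_{n,\varepsilon}-\theta_0}$. Let $\delta_0\coloneqq\mathrm{dist}(\theta_0,\partial\Theta)>0$ (\Cref{assump:convex_Theta}); on $(A'_{n,\varepsilon})^\complement$ one has $\abs{\hat\theta_{n,\varepsilon}-\theta_0}\ge\delta_0$, so $\idv_{(A'_{n,\varepsilon})^\complement}\le\delta_0^{-2m}\abs{\hat\theta_{n,\varepsilon}-\theta_0}^{2m}$ for each $m\in\N$. Combining the energy upper bound of \Cref{prop:EnergyForMomentConv} with the coercive estimate of \Cref{prop:coersive} gives $\abs{\hat\theta_{n,\varepsilon}-\theta_0}^{2}\le C\varepsilon^{2}D_{n,\varepsilon}$, where $D_{n,\varepsilon}$ is the right-hand side of \Cref{ineq:EnergyForMomentConv3}. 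Writing $D_{n,\varepsilon}=\varepsilon^{-2}\norm{K^n-K}_{L^\infty}^2+\tilde D_{n,\varepsilon}$, each summand of $\tilde D_{n,\varepsilon}$ is dominated by a constant multiple of $\SN{Z}^2$ (again via $\MN{\nu^n}\le\MN{\nu}$ for the discretized pieces), so $\E[\tilde D_{n,\varepsilon}^q]$ is bounded uniformly in $(n,\varepsilon)$ for every $q$; raising the displayed inequality to the $m$-th power then yields $\abs{\hat\theta_{n,\varepsilon}-\theta_0}^{2m}\le C_m\bigl(\norm{K^n-K}_{L^\infty}^{2m}+\varepsilon^{2m}\tilde D_{n,\varepsilon}^{m}\bigr)$.

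Finally I would assemble the pieces: inserting the last two estimates into the pointwise bound and applying Cauchy--Schwarz gives
\[
\E\bigl[\abs{r^1_{n,\varepsilon}}\bigr]\le C_m'\Bigl(\norm{K^n-K}_{L^\infty}^{2m}\,\E\bigl[\sup_\theta\abs{\grad_\theta\Phi_{n,\varepsilon}(\theta)}\bigr]+\varepsilon^{2m}\,\E\bigl[\sup_\theta\abs{\grad_\theta\Phi_{n,\varepsilon}(\theta)}^2\bigr]^{1/2}\E\bigl[\tilde D_{n,\varepsilon}^{2m}\bigr]^{1/2}\Bigr).
\]
Both expectations on the right are $O(1)$, and $\norm{K^n-K}_{L^\infty}=o(\varepsilon)$ by \Cref{assump:convergent_rate}, so the right-hand side is $o(\varepsilon^{2m})$; taking $m=2$ yields $\E[\abs{r^1_{n,\varepsilon}}]=o(\varepsilon^2)$ as $\varepsilon\to0$, $n\to\infty$ — in fact $o(\varepsilon^4)$. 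The only genuine obstacle — and the single place where the hypotheses beyond consistency are used — is securing the uniform moment control of $D_{n,\varepsilon}$, that is, making sure the singular term $\varepsilon^{-2}\norm{K^n-K}_{L^\infty}^2$ of the energy bound does not overpower the $\varepsilon^2$ gained from coercivity; this is exactly what \Cref{assump:convergent_rate} provides. Everything else is a direct reuse of the energy inequality already established for \Cref{prop:MomentConvergence}.
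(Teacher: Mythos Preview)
Your argument is correct (up to two cosmetic slips: $\SN{Z}$ is not literally Gaussian, but the sup of a continuous Gaussian process does have all moments by Fernique/Borell--TIS; and the right-hand side in your final display is $O(\varepsilon^{2m})$, not $o(\varepsilon^{2m})$ --- which still gives $o(\varepsilon^2)$ once $m\ge 2$). The route, however, is genuinely different from the paper's.

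The paper bounds $\abs{r^1_{n,\varepsilon}}$ by a three-factor H\"older split with exponents $(2,3,6)$: an $L^2$ bound on $\grad_\theta\Phi_{n,\varepsilon}(\hat\theta_{n,\varepsilon})$, the $L^3$ moment of $\varepsilon^{-1}(\hat\theta_{n,\varepsilon}-\theta_0)$ from \Cref{prop:MomentConvergence}, and $\Prob((A'_{n,\varepsilon})^\complement)^{1/6}$. This is conceptually clean, but note that it imports \Cref{prop:MomentConvergence}, which in turn relies on \Cref{assump:SmoothBoundaryForMorrey,assump:MuIsC^3} via Morrey's inequality --- assumptions \emph{not} listed in the lemma's hypotheses. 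Moreover, the paper's display only records $\Prob((A'_{n,\varepsilon})^\complement)\to 0$, whereas one actually needs a polynomial rate on this probability to reach $o(\varepsilon^2)$ (that rate is of course available from moment convergence and Markov, but it is left implicit).

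Your approach bypasses \Cref{prop:MomentConvergence} entirely: you feed the energy bound \Cref{prop:EnergyForMomentConv} plus coercivity directly into the indicator $\idv_{(A'_{n,\varepsilon})^\complement}\le\delta_0^{-2m}\abs{\hat\theta_{n,\varepsilon}-\theta_0}^{2m}$, and control everything by crude $C(1+\SN{Z})$ bounds. This uses exactly \Cref{assump:Theta,assump:cont,assump:isolated,assump:convex_Theta,assump:MuIsC^2,assump:Sigma_regular,assump:convergent_rate} and nothing more, so it honours the lemma's stated hypotheses precisely, and as a bonus yields the sharper $O(\varepsilon^4)$. The paper's route is more modular (it reuses a theorem already proved), while yours is more economical in assumptions and more transparent about where the $\varepsilon$-powers come from.
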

  
  \begin{proof}
    It follows from \Cref{thm:dis_parametric_consistency} that, for any $\eta>0$,
    \begin{equation*}
      \Prob(\varepsilon^{-2}r_{n,\varepsilon}^1 > \eta) < \Prob((A_{n,\varepsilon})^\complement)\to 0, \quad \varepsilon\to 0, \  n\to \infty, 
    \end{equation*}
    and that, from \Cref{assump:MuIsC^2,lem:convergence of discret.mu},  
    \begin{align*}
      \abs{ \partial_i \Phi_{n,\varepsilon}(\hat\theta_{n,\varepsilon}) }^2 
      &= \abs{ \innerp{\partial_i \hat{\mu}_{n,\varepsilon}^n, h_0+\varepsilon Z} - \innerp{\partial_i \hat{\mu}_{n,\varepsilon}^n, \K \hat{\mu}_{n,\varepsilon}^n} }^2 \\
      &= \abs{ \innerp{\partial_i \hat{\mu}_{n,\varepsilon}^n, \K \mu_{\theta_0}} + \varepsilon \innerp{\partial_i \hat\mu_{n,\varepsilon}^n, Z} - \innerp{\partial_i \hat{\mu}_{n,\varepsilon}^n, \K \hat{\mu}_{n,\varepsilon}^n} }^2 \\
      &\le 2 \varepsilon^2 \innerp*{\partial_i \hat{\mu}_{n,\varepsilon}^n, Z}^2 
      + 2 \innerp{(\partial_i  \hat{\mu}_{n,\varepsilon})^n - \partial_i \mu_{\theta_0} , \K (\hat{\mu}_{n,\varepsilon})^n}^2,
    \end{align*}
    for any $i=1,\dots,p$.
    Since $Z$ is a Gaussian vector, we have that 
    \begin{equation*}
      \E\qty[ \innerp{\partial_i \hat{\mu}_{n,\varepsilon}^n, Z}^2 ] 
      = \innerp{\partial_i \hat{\mu}_{n,\varepsilon}^n, \K \partial_i \hat{\mu}_{n,\varepsilon}^n}
      = \innerp{\partial_i \hat{\mu}_{n,\varepsilon}, \K^n \partial_i \hat{\mu}_{n,\varepsilon}}.
    \end{equation*}
    Therefore 
    $\E[ | \grad_\theta \Phi_{n,\varepsilon}(\hat\theta_{n,\varepsilon}) |^2 ]$ is bounded by \Cref{assump:MuIsC^2,lem:convergence of discret.mu},.
    Thus,  we obtain from \Cref{prop:MomentConvergence} and H\"{o}lder's inequality that
    \begin{align*}
      \E \qty[ \abs{ r_{n,\varepsilon}^1 } ] 
      &= 
        \E\qty[ \qty( \grad_\theta \Phi_{n,\varepsilon}(\hat\theta_{n,\varepsilon}) )^2 ]^{1/2} 
        \E\qty[\abs{\varepsilon^{-1}(\hat\theta_{n,\varepsilon}-\theta_0)}^3 ]^{1/3} 
        \E\qty[ \idv_{(A'_{n,\varepsilon})^\complement} ]^{1/6} \\
      &= 
        \E\qty[ \qty(\grad_\theta \Phi_{n,\varepsilon}(\hat\theta_{n,\varepsilon}) )^2 ]^{1/2} 
        \E \qty[ \abs{ \varepsilon^{-1}(\hat\theta_{n,\varepsilon} -\theta_0) }^3 ]^{1/3} 
        \Prob((A'_{n,\varepsilon})^\complement)^{1/6} \\
      &=o(\varepsilon^2),
    \end{align*}
    as $\varepsilon\to 0$ and $n\to\infty$.
  \end{proof}
  
  \begin{lemma}\label{lem:r2} 
    For any $u\in(0,1)$ and $\widetilde{\theta}_{n,\varepsilon}'\coloneqq u \theta_0 + (1-u) \hat{\theta}_{n,\varepsilon}$, let 
    \begin{align*}
      r_{n,\varepsilon}^2 
      &\coloneqq 
      \frac12 (\hat{\theta}_{n,\varepsilon} - \theta_0)^\transp \left\{\nabla_\theta^2 \Phi(\widetilde{\theta}_{n,\varepsilon}) - \nabla_\theta^2 \Phi(\theta_0)\right\} (\hat{\theta}_{n,\varepsilon}-\theta_0),\\
      r_{n,\varepsilon}^3 
      &\coloneqq 
      \frac12 (\hat{\theta}_{n,\varepsilon} - \theta_0)^\transp \left\{\nabla_\theta^2 \Phi_{n,\varepsilon}(\widetilde{\theta}_{n,\varepsilon}) - \nabla_\theta^2 \Phi(\widetilde{\theta}_{n,\varepsilon})\right\} (\hat{\theta}_{n,\varepsilon}-\theta_0).
    \end{align*}
    Under \Cref{assump:Theta,assump:cont,assump:isolated,assump:convex_Theta,assump:Sigma_regular,assump:MuIsC^3,assump:SmoothBoundaryForMorrey,assump:convergent_rate,assump:MuIsC^2}, it follows that
    \begin{equation*}
      \E\qty[ \abs{ r_{n,\varepsilon}^2 } ] 
      = o(\varepsilon^2) 
      \quad \text{and} \quad
      \E \qty[ \abs{ r_{n,\varepsilon}^3 } ] 
      = o(\varepsilon^2),\quad \varepsilon\to 0, \  n\to\infty.
    \end{equation*}
  \end{lemma}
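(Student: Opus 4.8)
The plan is to treat both $r^2_{n,\varepsilon}$ and $r^3_{n,\varepsilon}$ as quadratic forms of the shape $\tfrac12(\hat\theta_{n,\varepsilon}-\theta_0)^\transp M_{n,\varepsilon}(\hat\theta_{n,\varepsilon}-\theta_0)$ in which the matrix $M_{n,\varepsilon}$ in the middle is ``small'', and to absorb the outer factor $|\hat\theta_{n,\varepsilon}-\theta_0|^2=\varepsilon^2|\varepsilon^{-1}(\hat\theta_{n,\varepsilon}-\theta_0)|^2$ using the moment convergence \Cref{prop:MomentConvergence}. Note first that \Cref{assump:convex_Theta} makes $\widetilde\theta_{n,\varepsilon}=u\theta_0+(1-u)\hat\theta_{n,\varepsilon}$ a point of $\Theta$, so all Hessians below are well defined, and $|\widetilde\theta_{n,\varepsilon}-\theta_0|\le|\hat\theta_{n,\varepsilon}-\theta_0|$.

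For $r^2_{n,\varepsilon}$ I would use that, under \Cref{assump:MuIsC^3}, the limit contrast $\Phi(\theta)=\innerp{\mu_\theta,h_0}-\tfrac12\innerp{\mu_\theta,\K\mu_\theta}$ is three times continuously differentiable on the compact convex set $\Theta$ (differentiating under the pairing as in \Cref{lem:measure_partial_change}, with the pattern continued one further order), hence $\nabla^2_\theta\Phi$ is Lipschitz on $\Theta$, say with constant $L$. Then
\[
  \abs{r^2_{n,\varepsilon}}\le\tfrac12\abs{\hat\theta_{n,\varepsilon}-\theta_0}^2\,\norm{\nabla^2_\theta\Phi(\widetilde\theta_{n,\varepsilon})-\nabla^2_\theta\Phi(\theta_0)}_F\le\tfrac{L}{2}\abs{\hat\theta_{n,\varepsilon}-\theta_0}^3=\tfrac{L}{2}\varepsilon^3\abs{\varepsilon^{-1}(\hat\theta_{n,\varepsilon}-\theta_0)}^3 ,
\]
and taking expectations and invoking \Cref{prop:MomentConvergence} with $q=3$ — which bounds $\E[\abs{\varepsilon^{-1}(\hat\theta_{n,\varepsilon}-\theta_0)}^3]$ uniformly as $\varepsilon\to0$, $n\to\infty$ — gives $\E[\abs{r^2_{n,\varepsilon}}]=O(\varepsilon^3)=o(\varepsilon^2)$.

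For $r^3_{n,\varepsilon}$ the middle matrix is the discretization error $\nabla^2_\theta\Phi_{n,\varepsilon}(\widetilde\theta_{n,\varepsilon})-\nabla^2_\theta\Phi(\widetilde\theta_{n,\varepsilon})$, so $\abs{r^3_{n,\varepsilon}}\le\tfrac12\abs{\hat\theta_{n,\varepsilon}-\theta_0}^2\,G_{n,\varepsilon}$ with $G_{n,\varepsilon}\coloneqq\sup_{\theta\in\Theta}\norm{\nabla^2_\theta\{\Phi_{n,\varepsilon}(\theta)-\Phi(\theta)\}}_F$. I would re-examine the estimate inside the proof of \Cref{lem:discrete_twice_nabla_uni_conv}, together with \Cref{eq:NormConvergenceOfMeanByCovSmoothness} for the mean-function term and $\OPN{\K^n-\K}\le\|K^n-K\|_{L^\infty}$ (from \Cref{lem:convergence of discret.mu}) for the remaining terms, to read off the quantitative bound $G_{n,\varepsilon}\le C(\varepsilon\SN{Z}+\|K^n-K\|_{L^\infty})$ with $C$ deterministic and depending only on $\K$, $\{\mu_\theta\}_{\theta\in\Theta}$ and $\Theta$. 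Then, by Fernique's theorem $\E[\SN{Z}^2]<\infty$, so $\E[G_{n,\varepsilon}^2]\le2C^2(\varepsilon^2\E[\SN{Z}^2]+\|K^n-K\|_{L^\infty}^2)\to0$. Writing $\abs{\hat\theta_{n,\varepsilon}-\theta_0}^2=\varepsilon^2\abs{\varepsilon^{-1}(\hat\theta_{n,\varepsilon}-\theta_0)}^2$ and applying the Cauchy--Schwarz inequality,
\[
  \E[\abs{r^3_{n,\varepsilon}}]\le\tfrac{\varepsilon^2}{2}\,\E\big[\abs{\varepsilon^{-1}(\hat\theta_{n,\varepsilon}-\theta_0)}^4\big]^{1/2}\E[G_{n,\varepsilon}^2]^{1/2},
\]
where the first factor is bounded by \Cref{prop:MomentConvergence} with $q=4$ and the second tends to $0$; hence $\E[\abs{r^3_{n,\varepsilon}}]=o(\varepsilon^2)$.

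The main obstacle is the $r^3$ step: one needs to extract from \Cref{lem:discrete_twice_nabla_uni_conv} not merely the a.s.\ vanishing of $G_{n,\varepsilon}$ but a \emph{pathwise} bound that is linear in $\SN{Z}$ and in the deterministic quantity $\|K^n-K\|_{L^\infty}$, so that it survives being squared and integrated; this amounts to keeping track of the constants in that earlier estimate rather than only its qualitative conclusion. Everything else — the Lipschitz bound for $\nabla^2_\theta\Phi$, the moment bounds from \Cref{prop:MomentConvergence}, and the Hölder/Cauchy--Schwarz splitting — is routine.
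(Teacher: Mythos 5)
Your proof is correct, and for $r^2_{n,\varepsilon}$ it coincides with the paper's argument: Lipschitz continuity of $\nabla^2_\theta\Phi$ on the compact convex $\Theta$ plus the third-moment bound from \Cref{prop:MomentConvergence} gives $\E[\abs{r^2_{n,\varepsilon}}]=O(\varepsilon^3)$. For $r^3_{n,\varepsilon}$ you follow the same Cauchy--Schwarz skeleton as the paper, but you diverge at the key technical step. The paper controls $\E[\sup_{\theta\in\Theta}\norm{\nabla^2_\theta\{\Phi_{n,\varepsilon}(\theta)-\Phi(\theta)\}}_F^2]$ by isolating the Gaussian term $\sup_{\theta}\innerp{(\partial_{jk}\mu_\theta)^n,Z}^2$ and bounding its expectation via Morrey's inequality (\Cref{prop:Morrey}) together with Gaussian moment computations, which is where \Cref{assump:MuIsC^3,assump:SmoothBoundaryForMorrey} enter its version of this step; you instead extract the pathwise bound $\sup_{\theta\in\Theta}\norm{\nabla^2_\theta\{\Phi_{n,\varepsilon}(\theta)-\Phi(\theta)\}}_F\le C(\varepsilon\SN{Z}+\norm{K^n-K}_{L^\infty})$ directly from the estimate inside \Cref{lem:discrete_twice_nabla_uni_conv}, using $\abs{\innerp{(\partial_{jk}\mu_\theta)^n,Z}}\le\CkThMN[2]{\mu_\bullet}\SN{Z}$ together with \Cref{eq:NormConvergenceOfMeanByCovSmoothness} and the quantitative form of $\OPN{\K^n-\K}\le\norm{K^n-K}_{L^\infty}$, and then integrate it with Fernique's theorem. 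Both ingredients of your bound are valid, so your route works; it is more elementary and needs only $C^2$ regularity at this local step (the stronger hypotheses are still consumed through \Cref{prop:MomentConvergence}, on which both proofs rely). It also has a genuine advantage in precision: your estimate shows $\E[\sup_{\theta}\norm{\cdot}_F^2]\to0$, which is exactly what the $o(\varepsilon^2)$ conclusion requires, whereas the paper's write-up only asserts that this expectation is finite --- as literally stated that yields only $O(\varepsilon^2)$, and one must observe (as your bound makes explicit) that each term in the paper's estimate in fact vanishes as $\varepsilon\to0$ and $n\to\infty$.
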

  
  \begin{proof}
    The function $\theta\mapsto \nabla_{\theta}^2 \Phi(\theta)$ is Lipschitz continuous: there exists $C>0$ such that
    \begin{equation*}
      \abs{ \nabla_{\theta}^2 \Phi(\theta_1) - \nabla_{\theta}^2 \Phi(\theta_2) } 
      < C \abs{ \theta_1 - \theta_2 }, 
    \end{equation*}
    for any $\theta_1, \theta_2 \in \Theta$.
    Therefore, we obtain from \Cref{prop:MomentConvergence} that 
    \begin{equation*}
      \E \qty[ \abs{ r_{n,\varepsilon}^2 } ] 
      \le \frac{C\varepsilon^3}2 
        \E \qty[ \abs{ \varepsilon^{-1}(\hat\theta_{n,\varepsilon} -\theta_0) }^3 ] 
      = \order{\varepsilon^3}
      \quad 
      \text{as} \  \varepsilon\to 0, \  n\to\infty.
    \end{equation*}    

    Moreover, it follows from \Cref{prop:MomentConvergence} and Schwarz's inequality that
    \begin{equation*}
      \E \qty[ \abs{ r_{n,\varepsilon}^3 } ] 
      \le \frac{\varepsilon^2}{2} \E \qty[ \sup_{\theta\in\Theta} \abs{ \nabla_\theta^2 \Phi_{n,\varepsilon}(\theta) - \nabla_\theta^2 \Phi(\theta) }^2 ]^{\frac{1}{2}} \E \qty[ \abs{ \varepsilon^{-1}(\hat\theta_{n,\varepsilon} -\theta_0) }^4 ]^{\frac{1}{2}}.
    \end{equation*}
    Therefore, it suffices to show by \Cref{prop:MomentConvergence} that
    \begin{equation*}
      \E\left[\sup_{\theta\in\Theta} \left| \nabla_\theta^2 \Phi_{n,\varepsilon}(\theta) - \nabla_\theta^2 \Phi(\theta) \right|^2 \right] < \infty,\quad \varepsilon\to 0, \  n\to\infty.
    \end{equation*}

    For $j,k=1,\dots,p$, 
    it is analogous to the proof of \Cref{lem:discrete_twice_nabla_uni_conv} that
    \begin{align*}
      &\sup_{\theta\in\Theta} \left|\partial_{jk} \left\{\Phi_{n,\varepsilon} (\theta)- \Phi(\theta) \right\}\right|^2 \\
      \le& \sup_{\theta\in\Theta}
      \Bigl\{ \varepsilon\innerp{(\partial_{jk} \mu_\theta)^n,Z} + \innerp{(\partial_{jk}\mu_\theta)^n -\partial_{jk}\mu_\theta, \K\mu_0}\\
      &\qquad\qquad\left.
      +\OPN{\K^n-\K}\left(\MN{\partial_{jk}\mu_\theta}\MN{\mu_\theta} + \MN{\partial_{j}\mu_\theta}\MN{\partial_k\mu_\theta}\right)\right\}^2\\
      \le& \sup_{\theta\in\Theta}
      4 \Bigl\{ \varepsilon\innerp{(\partial_{jk} \mu_\theta)^n,Z}^2 + \innerp{(\partial_{jk}\mu_\theta)^n -\partial_{jk}\mu_\theta, \K\mu_0}^2\\
      &\qquad\qquad\left.
      +\OPN{\K^n-\K}^2\left(\MN{\partial_{jk}\mu_\theta}\MN{\mu_\theta} + \MN{\partial_{j}\mu_\theta}\MN{\partial_k\mu_\theta}\right)^2\right\}.
    \end{align*}
    By \Cref{assump:MuIsC^3,assump:Theta,lem:convergence of discret.mu}, the proof ends if we show that there exists $C>0$ independent of $n,j,k$ such that
    \begin{equation}
        \E \qty[ \sup_{\theta\in\Theta} \innerp{(\partial_{jk} \mu_\theta)^n,Z}^2 ] < C
        \label{eq:AICD1Bdd}.
    \end{equation}
    Applying \Cref{prop:Morrey} from \Cref{assump:MuIsC^3}, we see that there exists a constant $C_1>0$ depending only on $p,q,\Theta$ such that
    \begin{equation}
        \begin{aligned}
            & \E \qty[ \sup_{\theta\in\Theta} \innerp{(\partial_{jk} \mu_\theta)^n,Z}^2 ] \\
            &\quad \le 2C_1 \qty{ \E \qty[ \qty( \int_\Theta \abs{ \innerp*{ \partial_{jk}\mu_\theta, Z } }^q  \dd{\theta} )^{2/q} ]
            + \sum_{i=1}^p \E \qty[ \qty( \int_\Theta \abs{ \innerp*{ \partial_{ijk} \mu_\theta, Z } }^q  \dd{\theta} )^{2/q} ] }.
        \end{aligned}
        \label{eq:AICD1BddMor}
    \end{equation}
    Since $\innerp*{ \partial_{ijk} \mu_\theta, Z }\sim N\left(0,\innerp*{\partial_{ijk} \mu_\theta, \K (\partial_{ijk} \mu_\theta)}  \right)$, it follows from \Cref{assump:MuIsC^3} that 
    \begin{equation*}
        \E\left[\innerp*{ \partial_{ijk} \mu_\theta, Z } ^{2q}\right] 
        = 
        \frac{(2q)!}{2^q\cdot q!}
        \innerp*{ \partial_{ijk} \mu_\theta, \K \partial_{ijk} \mu_\theta} ^q
        \le \frac{(2q)!}{2^q\cdot q!} 
        \MN{\partial_{ijk} \mu_\theta}^2 \OPN{\K}, 
    \end{equation*}
    and by \Cref{assump:MuIsC^3,assump:Theta},
    \begin{equation}
        \E\left[\qty( \int_\Theta \abs{ \innerp*{ \partial_{ijk} \mu_\theta, Z} }^q  \dd{\theta} )^{2/q} \right] 
        \le C_1\qty( \int_\Theta \E\left[\innerp*{ \partial_{ijk} \mu_\theta, Z } ^{2q}\right] \dd{\theta} )^\frac{1}{q}
        \leq C_2
        \MN{\partial_{ijk} \mu_\theta}^{2} \OPN{\K},
        \label{eq:AICD1Bdd1}
    \end{equation}
    where the constant $C_2$ depends only on $p,q$ and $\Theta$. Similarly, we obtain
    \begin{equation}
        \E\left[ \qty( \int_\Theta \abs{ \innerp*{ \partial_{jk}\mu_\theta, Z} }^q  \dd{\theta} )^{2/q}\right]
        \leq C_2
        \MN{\partial_{jk} \mu_\theta}^{2} \OPN{\K}.
        \label{eq:AICD1Bdd2}
    \end{equation}
    By combining \cref{eq:AICD1BddMor,eq:AICD1Bdd1,eq:AICD1Bdd2}, we obtain \Cref{eq:AICD1Bdd}. This completes the proof. 
  \end{proof}
  
  \begin{proof}[Proof of \Cref{thm:QGAIC}]
      \begin{align*}
            \Phi_{n,\varepsilon}(\hat{\theta}_{n,\varepsilon})-\Phi(\hat{\theta}_{n,\varepsilon}) 
            &= \qty\Big(\Phi_{n,\varepsilon}(\hat{\theta}_{n,\varepsilon}) - \Phi_{n,\varepsilon}(\theta_0))
                + \qty\Big(\Phi_{n,\varepsilon}(\theta_0) - \Phi(\theta_0))
                + \qty\Big(\Phi(\theta_0) - \Phi(\hat{\theta}_{n,\varepsilon})) \\
            &\eqqcolon D_1 + D_2 + D_3.
      \end{align*}
      By Taylor's formula, there exist  $u_1\in(0,1)$ and $\widetilde{\theta}_{n,\varepsilon}'\coloneqq u_1 \theta_0 + (1-u_1) \hat{\theta}_{n,\varepsilon}$ such that 
      \begin{align*}
          D_1
          =& \left(\grad_\theta \Phi_{n,\varepsilon}(\hat{\theta}_{n,\varepsilon}) \right)^\transp (\hat\theta_{n,\varepsilon} -\theta_0) (\idv_{A'_{n,\varepsilon}} + \idv_{(A'_{n,\varepsilon})^\complement}) 
          + \frac{1}{2} (\hat{\theta}_{n,\varepsilon} -\theta_0)^\transp \left(\nabla_\theta^2 \Phi_{n,\varepsilon}(\widetilde{\theta}_{n,\varepsilon})\right) (\hat{\theta}_{n,\varepsilon} - \theta_0)\\
          =& -\frac{1}{2} (\hat{\theta}_{n,\varepsilon} - \theta_0)^\transp \Sigma(\hat{\theta}_{n,\varepsilon} - \theta_0) + r_{n,\varepsilon}^1 + r_{n,\varepsilon}^2 + r_{n,\varepsilon}^3.
      \end{align*}
      Thus, we see by \Cref{lem:r1,lem:r2} that
      \begin{equation}
          \begin{aligned}
              \varepsilon^{-2}\E\left[D_1 \right] 
              &=-\frac{1}{2}\E\left[\frac{1}{\varepsilon}(\hat{\theta}_{n,\varepsilon} - \theta_0)^\transp \;\Sigma\; \frac{1}{\varepsilon}(\hat{\theta}_{n,\varepsilon} - \theta_0) \right] +o(1)\\
              &= -\frac{1}{2} \tr\left(\Sigma\ \E\left[\frac{1}{\varepsilon}(\hat{\theta}_{n,\varepsilon} - \theta_0)^\transp\;\frac{1}{\varepsilon}(\hat{\theta}_{n,\varepsilon} - \theta_0)\right]\right) +o(1) \\
              &= -\frac{1}{2} \tr\left(\Sigma\Sigma^{-1}\right) + o(1)
              = -\frac{p}{2} + o(1), \quad \varepsilon\to 0, \  n\to\infty.
          \end{aligned}
          \label{eq:AIC_D1}
      \end{equation}
      
      Since $Z$ is a centered Gaussian vector, it follows by \Cref{eq:ConvergenceRateForAIC} and the same argument of \labelcref{eq:NormConvergenceOfMeanByCovSmoothness} that
      \begin{equation} 
          \varepsilon^{-2}\E\left[D_2\right] 
          = \varepsilon^{-2}\innerp{\mu_{\theta_0}, \K (\mu^n_{\theta_0}-\mu_{\theta_0})} 
          - \frac{1}{2\varepsilon^2}\innerp{\mu_{\theta_0}, (\K^n - \K)\mu_{\theta_0}}
          = o(1) 
          \label{eq:AIC_D2} 
      \end{equation}
      as $\varepsilon\to0$, $n\to\infty$.
      
          By \Cref{assump:Theta,assump:MuIsC^2}, $\nabla^2_\theta \Phi_{n,\varepsilon}$ is continuous on the bounded set $\Theta$. Applying Taylor's formula, there exists $u_2\in(0,1)$ and $\theta_{n,\varepsilon}'\coloneqq u_2 \theta_0 + (1-u_2) \hat{\theta}_{n,\varepsilon}$ such that
      \begin{equation*}
          D_3 = \frac{1}{2} (\hat{\theta}_{n,\varepsilon} - \theta_0)^\transp \nabla_\theta^2 \Phi(\theta_0)  (\hat{\theta}_{n,\varepsilon}-\theta_0)
          +\frac{1}{2} (\hat{\theta}_{n,\varepsilon} - \theta_0)^\transp \qty{ \nabla_\theta^2 \Phi(\theta_{n,\varepsilon}') - \nabla_\theta^2 \Phi(\theta_0) } (\hat{\theta}_{n,\varepsilon}-\theta_0).
      \end{equation*}
      Analogously to the discussion to obtain \Cref{eq:AIC_D1},
      \begin{equation*}
          \varepsilon^{-2}\E[D_3] = -\frac{p}{2} + o(1),\quad \varepsilon\to 0, \  n\to \infty.
          \label{eq:AIC_D3}
      \end{equation*}

      By combining \Cref{eq:AIC_D1,eq:AIC_D2}
      \begin{equation*}
          \E \qty[ \Phi_{n,\varepsilon}(\hat{\theta}_{n,\varepsilon}) - \Phi(\hat{\theta}_{n,\varepsilon}) ]
          =\E[D_1] +  \E[D_2] + \E[D_3]
          = -\varepsilon^2 p + o(\varepsilon^2)
      \end{equation*}
      as $\varepsilon\to 0$, $n\to \infty$, and so, we have shown the desired result.
  \end{proof}

  \printbibliography

\end{document}